 
\documentclass[bj,numbers,preprint]{imsart}

\usepackage{graphicx}
\usepackage{natbib}
\usepackage{amsmath,amsfonts,amsthm, amssymb,latexsym, mathtools}
\usepackage{xcolor}
\usepackage{hyperref}
\usepackage{bbm}
\usepackage{mathrsfs}
\usepackage{booktabs}

\startlocaldefs

\theoremstyle{plain}
\newtheorem{lemma}{Lemma}

\newtheorem{theorem}{Theorem}

\theoremstyle{definition}
\newtheorem{remark}{Remark}

\newcommand{\post}[1]{\Pi \left[ #1 \, | \, X \right] }
\newcommand{\R}{\mathbb{R}}

\definecolor{commentpaul}{rgb}{0,0.6,0}
\definecolor{commentser}{rgb}{0.6,0,0}

\definecolor{amethyst}{rgb}{0.6, 0.4, 0.8}

\definecolor{blendedblue}{rgb}{0.2,0.2,0.7}

\makeatletter
\newcommand{\mylabel}[2]{#2\def\@currentlabel{#2}\label{#1}}
\makeatother

\endlocaldefs

\begin{document}

\begin{frontmatter}
\title{Heavy-tailed and Horseshoe priors for regression and
sparse Besov rates}
\runtitle{Heavy-tailed priors and sparse Besov rates}

\begin{aug}
\author[A]{\fnms{Sergios} \snm{Agapiou}\ead[label=e1]{agapiou.sergios@ucy.ac.cy}}
\author[B]{\fnms{Isma\"el}~\snm{Castillo}\ead[label=e2]{ismael.castillo@sorbonne-universite.fr}}
\author[C]{\fnms{Paul}~\snm{Egels}\ead[label=e3]{paul.egels@sorbonne-universite.fr}}
\address[A]{Department of Mathematics and Statistics,
University of Cyprus, Nicosia, Cyprus.
\printead{e1}}

\address[B]{Sorbonne Université, LPSM; 4, place Jussieu, 75005 Paris,
France.
\printead{e2}}

\address[C]{Sorbonne Université, LPSM; 4, place Jussieu, 75005 Paris,
France.
\printead{e3}}
\end{aug}

\begin{abstract}
The large variety of functions encountered in nonparametric statistics, calls for methods that are flexible enough to achieve optimal or near-optimal performance over a wide variety of functional classes, such as Besov balls, as well as over a large array of loss functions. In this work, we show that a class of heavy-tailed prior distributions on basis function coefficients introduced in \cite{AC} and called Oversmoothed heavy-Tailed (OT) priors, leads to Bayesian posterior distributions that satisfy these requirements;  the case of horseshoe distributions is also investigated, for the first time in the context of nonparametrics, and we show that they fit into this framework. 
Posterior contraction rates are derived in two settings. The case of Sobolev--smooth signals and $L_2$--risk is considered first, along with a lower bound result showing that the imposed form of the scalings on prior coefficients by the OT prior is necessary to get full adaptation to smoothness. Second, the broader case of Besov-smooth signals with $L_{p'}$--risks, $p' \geq 1$, is considered, and minimax posterior contraction rates, adaptive to the underlying smoothness, and including rates in the so-called {\em sparse} zone, are derived.   
We  provide an implementation of the proposed method and illustrate our results through a simulation study.
\end{abstract}

\begin{keyword}
\kwd{Bayesian nonparametrics, Besov spaces, frequentist analysis of posterior distributions, heavy-tailed prior distributions, horseshoe prior, sparse Besov rates.}
\end{keyword}

\end{frontmatter}

\section{Introduction}\label{sec : intro}

The ability of estimators to be flexible, in the sense that their properties remain excellent in a variety of contexts, is a particularly sought-after characteristic. In nonparametric statistics, where the quantity of interest is typically an unknown function (which can represent a signal, an image etc. in practical applications), wavelet thresholding methods are a fundamental class that arguably satisfies the previous flexibility desideratum. Simple non-linear thresholding rules such as hard or soft thresholding are very broadly used in many areas of statistics and signal processing; from the mathematical perspective, they achieve asymptotic near-minimaxity over a very broad variety of functional classes and loss functions. While they are simple to implement and overall display very good empirical behaviour, wavelet thresholding methods still face a number of challenges numerically, in particular for low or moderate sample sizes, as one needs to find good values for thresholding constants, which may require some tuning.

Bayesian nonparametric methods, on the other hand, have experienced rapid growth since the early 2000's, when the development of sampling algorithms to simulate from posterior distributions or approximations thereof (MCMC, ABC, Variational Bayes to name just a few) has been paralleled by a progressive mathematical understanding of the properties needed for the prior distribution to achieve optimal or near-optimal convergence properties for the corresponding posterior distributions. We refer to \cite{gvbook, icsfl} for  recent overviews on the field. Among the most popular classes of priors on functions in statistics and machine learning are Gaussian processes \cite{rw06} (henceforth GPs). For a number of function classes, in particular ones for which the signal's smoothness is `homogeneous' over the considered input space, one can show that GPs achieve optimal posterior contraction rates, provided their parameters are well chosen \cite{vz08, ic08}, and adaptation to smoothness can be achieved by, for example, drawing a scaling parameter at random \cite{vz09}. However, for more `inhomogeneous' signals, Gaussian processes can be shown to lead to suboptimal rates \cite{aw21} for certain losses at least (e.g. quadratic) and, more generally, properties of posterior distributions for functions with `heterogeneous' regularity across the input space are currently less understood; we review below a few recent  results in this direction.  
   
Regarding optimality of rates, historically the first systematic study of non-matched situations where the parameter of the loss function does not match the norm index defining the functional class, is due to Nemirovskii \cite{nem85} and to Nemirovskii, Polyak and Tsybakov \cite{npt83, npt85}, where in particular it was noticed that linear estimators can be suboptimal. Donoho and Johnstone \cite{DJ94} studied this type of questions in  sequence space, while 
Donoho, Johnstone, Kerkyacharian and Picard \cite{djkp96} investigated  {\em adaptive} minimax rates in density estimation for Besov classes. For such classes, it turns out that the information-theoretic boundaries can be described by three `zones', depending on the loss function and the characteristics of the Besov space -- we refer to \cite{hkpt98} for an in-depth discussion --: a {\em regular} zone, where best linear estimators achieve the minimax rate and the rate is the classical $n^{-\beta/(2\beta+1)}$ rate in terms of regularity $\beta$ and number of observations $n$, an {\em intermediate} zone, where the `global' minimax rate is still the same but linear estimators are suboptimal, and a {\em sparse} zone, where the rate deteriorates and can be expressed as a combination of the regularity and loss function parameters. This illustrates the delicate interplay between measures of loss and regularity, which in particular leads to significant, polynomial--in--$n$ differences in convergence rates. Aside of wavelet methods, an adaptive local bandwidth selector based on Lepski's method was developed for kernel--based estimators in \cite{lepmamspok97} and latter for anisotropic functions in \cite{Lepski}. While working on this paper, we learned of the very recent work \cite{lmr25}, where the authors derive oracle inequalities and convergence rates in non-matched situations for penalisation methods. 
 
Coming back to Bayesian methods, as noted above, Gaussian Processes can be suboptimal when estimating functions with inhomogeneous smoothness (a fact formally proved in \cite{aw21} in the white noise model, with mean square loss).  
Recently, it has been noted that taking heavier tails than Gaussian in defining priors can help in obtaining more diverse behaviour. In \cite{adh21}, $p$-exponential priors on coefficients of a basis expansion are considered; while the posterior convergence rates in mean square loss improve in a range of aspects compared to the Gaussian case, they are still not adaptive to smoothness. A main advantage is that a wider range of Besov spaces is accessible with optimal contraction rates with these priors, for well chosen parameters. For example, unlike GP priors, priors with exponential tails can be tuned to attain the minimax rate in mean square loss, over both classes of homogeneously smooth functions as well as classes permitting spatial inhomogeneities, see \cite{adh21} and the recent preprint \cite{doleraetal24}. To make these priors adaptive, it is still necessary, however, and similar to GPs as discussed above, to draw at least one extra parameter at random;  such adaptive counterparts have  been obtained in \cite{as22} for the white noise model, while \cite{g23} considered density estimation. 
In \cite{AC}, the following (surprising at first) result is obtained: putting heavy-tailed priors (in the sense that they have polynomially decreasing tails) on coefficients can lead to {\em automatic} adaptation to smoothness. By taking well-chosen (but, crucially, deterministic and universal) scalings on heavy-tailed coefficients -- defining so-called Oversmoothed heavy-Tailed (OT) priors --, the authors in  \cite{AC} derive adaptation, up to a logarithmic factor, to homogeneous and inhomogeneous smoothness in the $L_2$--norm, and in the $L_\infty$ (supremum)--norm over H\"older classes. Results cover Gaussian white noise regression using standard posterior distributions and priors with at least two moments (excluding e.g. Cauchy or horseshoe priors), while extensions are provided to other models such as density estimation and binary classification using fractional posteriors.  
The work \cite{ce24} uses related heavy-tailed priors on coefficients of deep neural networks to derive automatic adaptation to smoothness and compositional structures or to smoothness and geometric structures, as well as adaptation to some anisotropic Besov classes, but results therein are confined to the $L_2$--loss.

We now review a few of the existing results for Bayesian posteriors for `non-canonical distances' (in the loose sense of distances for which one cannot directly apply a generic contraction rate theorem as in \cite{ggv00}).  
For the Bayesian approach, which is likelihood-based, the study of convergence in terms of certain loss functions, such as $L_p$ ($p>2$) or $L_\infty$--losses, is notoriously difficult. The first systematic study of posterior contraction in such norms is due to Giné and Nickl \cite{gn11}, where consistency (in general sub-minimax) rates are obtained under generic conditions. An approach to get posterior contraction at minimax rate in the supremum norm is introduced in \cite{ic14}. Adaptive rates have been obtained for specific priors and models \cite{hrs15, cm21, cr22, naulet22, AC}.
The work  \cite{cr21} shows contraction under the (global) supremum norm  for tree-type BCART priors, while \cite{rr24} proves local rates for spike-and-slab priors and  BCART priors. 
Yet, to the best of our knowledge, there is no systematic study of Bayesian posterior distributions for Besov regularities and loss functions other than $L_2$ and $L_\infty$ and one of the paper's aims is to fill this gap in a regression setting. To reduce technicalities, we will do so in the Gaussian white noise model, that can be seen as a prototypical nonparametric model \cite{GineNickl}.

We also underline that, while `adaptive' priors (in the sense that, for example, they adapt to the unknown underlying smoothness) can sometimes be obtained by drawing certain parameters in the prior at random, in general -- especially for distances for which one cannot apply a generic result as in \cite{ggv00} -- it is
 unclear how to get adaptation to smoothness in terms of a specific loss, such as an $L_p$--loss. To exemplify this, let us consider the class of sieve priors with random truncation, namely priors defined as functions expanded on a finite number $K$ of coefficients over a basis, with random coefficients, and where the cut-off $K$ is itself drawn at random. The work \cite{arbel13}, for instance, shows that such priors are typically adaptive in terms of the $L_2$--loss in regression, while a certain sub-optimality is noted for the posterior mean in pointwise loss. In their study of Bayesian trees, the authors in \cite{cr21}, prove a lower bound result (Theorem 5 in that source) showing that the posterior contraction rate of a fully-grown tree with random depth $K$ (which is a regular histogram prior and can be viewed as a sieve with cut-off $K$), while optimal for the $L_2$-loss (up to a logarithmic term), is {\em polynomially} sub-optimal in terms of a supremum-type loss. This illustrates that the choice of the prior to target adaptation with respect to a certain loss must be done with particular care. 
 
Regarding specific priors, it has been reported in the literature that {\em horseshoe} priors \cite{hs1, hs2}, that are a very popular choice in the context of sparse high-dimensional models (and can be shown to display near-optimal behaviour in these sparse settings \cite{vkv14, vsv17}), can also be deployed in nonparametrics with empirically very good behaviour, but up to now there is no theory backing up these empirical findings.

The contributions of this paper are threefold
\begin{enumerate}
\item we consider the question of obtaining minimax convergence rates for posterior distributions over a wide variety of Besov classes and losses. We
show that the oversmoothed heavy-tailed (OT) priors recently introduced in \cite{AC} (and featuring scaling parameters with a specific deterministic decrease, slightly faster than polynomial in $n$) achieve adaptive (near)-minimax rates in the three aforementioned zones. This includes the `sparse' zone with convergence rates featuring indices relative to both the functional class and the loss function. We also derive a sharpness result showing that scalings with exactly polynomial decrease can lead to suboptimal rates;
\item  for the first time, we obtain posterior contraction rates for horseshoe prior distributions in the context of nonparametric estimation; doing so is nontrivial, since this prior both features very heavy (Cauchy) tails and a density going to infinity at zero. We also obtain rates for OT priors without the two moment assumption imposed in \cite{AC}.
\item we implement our method and show that in terms of statistical risk it is at least comparable to, while sometimes improving upon, state-of-the-art software.
\end{enumerate}

\textit{Gaussian white noise model.} For a given true regression function $f_0 \in L_2 := L_2([0,1])$ and $n \geq 1$, the data $Y^{(n)}$ will be generated from the following model
\begin{equation}\label{def : GWN}
    d Y^{(n)} = f_0(t) \, dt + \frac{1}{\sqrt{n}}d W(t), \qquad t\in [0,1],
\end{equation}
where $W$ is the standard Brownian motion on $\R$. Given $\{\varphi_k \, : \, k \geq 1\}$ an orthonormal basis of $L_2$ for the canonical inner product $\langle . \, , \, . \rangle$, we denote by $f_{0,k} \coloneqq \langle f_0 , \varphi_k \rangle$ the coefficients of $f_0$ (which are square summable, $(f_{0,k})\in\ell_2$), so that 
\[ f_0 = \sum_{k\geq 1 } f_{0,k} \varphi_k \qquad \text{ holds in the $L_2$--sense}.\]
Projecting \eqref{def : GWN} onto $\{\varphi_k\}$ and denoting $X_k \coloneqq \int_0^1 \varphi_k(t) \, dY^{(n)}(t)$, one gets the (infinite) normal sequence model
\begin{equation}\label{def : nseq}
    X_k =  f_{0,k} + \frac{1}{\sqrt{n}}\xi_k, \quad k \geq 1,
\end{equation}
where $\{\xi_k\}_{k \geq 1}$ are independent $\mathcal{N}(0,1)$ variables. In the following, we write $X = X^{(n)} = (X_1,X_2, \dots )$ for the corresponding sequence of observations.

\medskip
\textit{Frequentist analysis of posterior distributions.} Consider data $X$ generated from the sequence model \eqref{def : nseq} with true regression function $f_0\in L_2$ identified to its coefficients $(f_{0,k}) \in\ell_2$. In the following, this will be written as $X = X^{(n)} \sim P_{f_0}^{(n)}$, and  one denotes by  $E_{f_0}$  the expectation under this distribution. The reconstruction of $f_0$ from the data $X$ will be conducted through a Bayesian analysis: from a prior distribution $\Pi$ on $\ell_2$ to be chosen below, one constructs a data-dependent probability measure on  $\ell_2$ called the posterior distribution and given, for any measurable $B \subset \ell_2$, by
\begin{equation}\label{def : post}
\Pi[ B \, | \, X ] := \frac{\int_B \exp\{ -\frac{n}{2} \sum_{k\geq1} (X_k -f_k)^2 \} \, d\Pi(f) }{\int \exp\{ -\frac{n}{2} \sum_{k\geq1} (X_k -f_k)^2 \} \, d\Pi(f)}.
\end{equation} 
{For a given distance $d$, we say that the posterior contracts around $f_0$ at the rate $\varepsilon_n \to 0$ in $d$-loss, if 
\begin{equation}\label{def : contrate}
E_{f_0}\Pi(d(f,f_0)\le M\varepsilon_n \, | X) \to 1\quad \text{
as $n \to \infty$},\end{equation} with $M>0$ a sufficiently large constant.}

\textit{Heavy tailed series priors}. For a function $f \in L_2$, with $f = \sum_{k\geq 1 } f_k \varphi_k$, we define a prior $\Pi$ on $f$ by drawing the coefficients $f_k$ independently using heavy-tailed density functions. Here we consider heavy-tailed priors as in \cite{AC} that have the form
\begin{equation}\label{def : priork}
    f_k = \sigma_k \zeta_k,
\end{equation} 
where $\sigma_k$ are deterministic decaying coefficients and ${\zeta_k}$ are independent identically distributed (i.i.d.) random variables with a given heavy-tailed density $h$ on $\R$. In particular, we assume that $h$ satisfies the following conditions: there exist constants $c_1, c_2>0$ and $\kappa \geq 0$ such that
\begin{enumerate}
    \item[\mylabel{H1}{(H1)}]  \text{$h$ is symmetric, positive, and decreasing on $[0,+\infty)$,}
    
    \item[\mylabel{H2}{(H2)}] for all $x \geq 0$,
    \begin{equation*} 
        \log \left( 1/h(x) \right) \leq c_1 (1+\log^{1+\kappa}(1+x)),
    \end{equation*}
    \item[\mylabel{H3}{(H3)}] for all $x \geq 1$,
    \begin{equation*} 
         \overline{H}(x) := \int_x^{+\infty} h(u)du \leq \frac{c_2}{x}.
    \end{equation*} 
\end{enumerate}
 Note that, compared to \cite{AC}, the density $h$ is not required to be bounded and must only satisfy the tail condition (H3).  The latter condition is very mild and includes distributions without integer  moments.  
 For instance, $\kappa=0$ in (H2) allows for densities with polynomial tails such as the Cauchy density.  Note that although the prior might not have a finite mean, the Gaussianity of the likelihood in model \eqref{def : nseq} ensures that all posterior moments, including the posterior mean, are finite.
 
 {\em Oversmoothed heavy-Tailed (OT) priors.} 
 Following \cite{AC}, the choice of  scalings $(\sigma_k)$ in \eqref{def : priork} we argue in favour, is a deterministic decay which is (slightly) faster than any polynomial in $k$:  for  some $\nu >0$ and all $k\ge 1$, we set
\begin{equation}\label{def : OTprior}
   \sigma_k = e^{-(\log k)^{1 + \nu}}.
\end{equation}
 For any specific choice of the density $h$ satisfying assumptions (H1)-(H2)-(H3), and any fixed $\nu>0$, the resulting prior is an instance of what we call the OT-prior. The idea behind this prior is to  shrink small values of the observed noisy coefficients by the deterministic small scaling factors $\sigma_k$, while the heavy-tails of the $\zeta_k$ will enable the prior to capture substantial signals with high probability, see \cite[Section 2]{AC} for a more detailed intuition. The results below show that the posterior contracts at the minimax rate up to log-factors depending on $\kappa$ and $\nu >0$. Although theory suggests taking $\nu$ as small as possible, finite noise precision effects lead us not to choose $\nu$ too small:  since the value of the hyper-parameter $\nu$ has relatively little effect on the non-asymptotic behavior of the posterior, across all simulations presented in Section \ref{sec:sims}, we take the same value $\nu =1/2$.



\medskip

\textit{Student and Horseshoe priors}. We consider two specific classes of  distributions for $\zeta_k$ in \eqref{def : priork}. The first is the class of Student distributions with at least one degree of freedom, that verify (H1)-(H2)-(H3). The second is the Horseshoe prior with parameter $\sigma_k >0$,
\begin{equation}\label{def : HSprior}
  \qquad f_k \sim HS(\sigma_k) \qquad \text{independently across $k \geq1$;}
\end{equation}
for any $\tau >0$, the distribution of $f \sim HS(\tau)$ is a mixture arising from $\lambda \sim C^+(0,1)$ and $f|\lambda \sim \mathcal{N}(0,\tau\lambda^2)$, where $C^+(0,1)$ is the half-Cauchy distribution with location parameter $0$ and scale $1$. The Horseshoe distribution $HS(\tau)$ has a density $h_{\tau}$ satisfying, for all $t \neq 0$ (see \cite{hs1}, Theorem 1.1),
    \begin{equation}\label{eq : HSandwich}
        \frac{1}{(2\pi)^{3/2} \tau} \log(1 + \frac{4 \tau^2}{t^2}) \leq h_{\tau}(t) \leq \frac{1}{(2\pi)^{3/2}\tau} \log(1 +\frac{2\tau^2}{t^2}).
    \end{equation} 
Horseshoe distributions are of the form \eqref{def : priork}, indeed if $f_k \sim HS(\sigma_k)$ one can write $f_k = \sigma_k \zeta_k^1$ where $\zeta_k^1 \sim HS(1)$. A random variable $\zeta^1 \sim HS(1)$ is symmetric and has Cauchy-like tails (this readily follows from \eqref{eq : HSandwich}), its density $h_1$ satisfies conditions (H1), (H2) with $\kappa =0$, and $(H3)$. A variation of the Horseshoe prior that is of common use in high-dimensional models is a truncated version with uniform rescaling (with respect to $k$), that is for some $\tau>0$,
    \begin{equation}\label{def : truncHSprior}
    f_k \overset{i.i.d}{\sim} HS(\tau)  \ \text{  for  }\ k \leq n, \quad \text{and} \quad f_k = 0 \ \text{ for }\  k > n.
    \end{equation}
This prior matches the definition of the heavy-tailed prior \eqref{def : priork} with $\sigma_k=\tau$ for all $k$ up to the truncation point $k = n$, after which it assigns all coefficients to zero. The uniform rescaling $\tau$ is typically taken to be of the order of a negative power of $n$, see the discussion in Section \ref{sec:disc} and Theorem \ref{thm : OTconc} below.\\



\emph{Outline.} In Section \ref{sec : Sobolev}, we establish posterior contraction rates in the $L_2$-norm for Hilbert–Sobolev truths. Specifically, in Section \ref{subsec : sobo upper} we show that OT priors on basis coefficients, as well as truncated horseshoe priors, achieve (nearly) minimax-optimal rates. In Section \ref{subsec : sobo lower}, we derive lower bounds (both in probability and in expectation) for heavy-tailed priors with the slightly more common (so far in the literature of series priors) choice of polynomially decaying scalings of the form $\sigma_k=k^{-\alpha - 1/2}$. In particular, for $\beta$-regular Sobolev truths, while this choice was shown in \cite{AC} to lead to (nearly) minimax rates in the over-smoothing regime $\alpha\ge\beta$, here we show that they give rise to polynomially slower--than--minimax rates in the under-smoothing regime $\alpha < \beta$, hence establishing that they cannot be fully adaptive to smoothness, which gives another rationale for choosing OT-scalings as in \eqref{def : OTprior}.

Section \ref{sec : besov} considers a much broader setting of functional classes allowing for non-homogeneous smoothness and non-matched losses, showing that OT heavy-tailed priors adapt to the Besov regularity of the truth and achieve (nearly) minimax rates for general $L_p$ losses, in all three zones, "regular", "intermediate" and "sparse".

Section \ref{sec:sims} provides numerical experiments along with implementation details, illustrating a range of rate behaviors across different signal classes. Section \ref{sec:disc} provides a brief discussion putting the results of the paper into perspective. Sections  \ref{proof : OTconc} and \ref{proof : lbprob} contain the proofs of Theorem \ref{thm : OTconc} and \ref{thm : lbprob} respectively. All other proofs and additional technical material are contained in the supplement \cite{sm}.

\section{Contraction in $L_2$--loss for Hilbert-Sobolev spaces}\label{sec : Sobolev}
The rate at which the posterior distributions \eqref{def : post} concentrate around the truth $f_0$ as in \eqref{def : contrate}, crucially depends on both the regularity of the truth and the loss $d$. {As a warm-up to the more elaborated results considered in the next section with double-indexed wavelet bases}, we consider in this section the simplest case of Hilbert-Sobolev-type balls and the $L_2$--loss, {with expansions into a simple single-index orthonormal basis (e.g. the Fourier basis)}. Recalling that $f_k = \langle f , \varphi_k \rangle$, for $\beta , F >0$, denote
\begin{equation}\label{def : sobolev ball}
    \mathcal{S}^{\beta}(F) = \Big\{ f \in L_2([0,1]) \, : \quad \sum_{k\geq 1 } k^{2 \beta} f_k^2  \leq F^2 \Big\}.
\end{equation}
For a suitable choice of orthonormal basis $\{ \varphi_k \}$, the set $\mathcal{S}^{\beta}(F)$ corresponds to a ball of $L_2$--functions with $\beta$ derivatives in the  $L_2$--sense. A more general study of posterior contraction under $L_p$--losses and Besov smoothness is conducted in Section \ref{sec : besov}.

\subsection{Upper bounds for OT and truncated Horseshoe priors}\label{subsec : sobo upper}
Our first result provides an upper bound on the contraction rate for OT priors, where the decay sequence is given by \eqref{def : OTprior}. Up to a logarithmic factor (a logarithmic loss is in fact unavoidable for separable estimators, as shown in \cite{cai08}; see Remark \ref{remlogs} below for more on this) such priors achieve the minimax convergence rate in terms of the $L_2$--loss over the above class.

In all the results to follow, when we refer to priors given by \eqref{def : priork}, it will be understood that the prior density $h$ of the variables $\zeta_k$ satisfies  conditions \ref{H1}--\ref{H2}--\ref{H3}.

\begin{theorem}\label{thm : OTconc}
   Suppose, in the sequence model \eqref{def : nseq}, that $f_0 \in \mathcal{S}^{\beta}(F)$ and let $\Pi$ be the OT series prior with independent coefficients $(f_k)$ given by \eqref{def : priork} (in particular, the prior \eqref{def : HSprior} is admissible) and scalings given by \eqref{def : OTprior}. As $n \to \infty$,
    \[ E_{f_0} \Pi \left[ \bigg\{ f \, : \, ||f - f_0||_2^2 >  M_n  \left(\frac{ \log^{(1+\kappa)(1+\nu)}n}{n} \right)^{\frac{ 2 \beta }{2 \beta +1}}  \bigg\} \, \bigg| \, X \right] \to 0 ,\]
    here $M_n$ is an arbitrary sequence such that $M_n \to \infty$ as $n \to \infty$. The same conclusion holds  if the prior $\Pi$ is  the truncated Horseshoe prior \eqref{def : truncHSprior} with uniform scaling $\tau = n^{-a}$ and $a \ge 4$, up to replacing the logarithmic factor by $\log^{ 2\beta / (2\beta +1)} n$ (case $ \kappa =0$ and $\nu =0$).
    \end{theorem}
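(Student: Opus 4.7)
The plan is to apply the standard posterior contraction framework of Ghosal, Ghosh and van der Vaart \cite{ggv00}, exploiting that in the Gaussian white noise sequence model the Kullback--Leibler divergence between $P_f^{(n)}$ and $P_{f_0}^{(n)}$ equals $\tfrac n2 \|f-f_0\|_2^2$, so that KL--neighbourhoods coincide with $L_2$ balls. For the target rate $\varepsilon_n = (\log n)^{\delta/2} n^{-\beta/(2\beta+1)}$, it suffices to verify (i) a prior--mass lower bound $\Pi(\|f-f_0\|_2\le \varepsilon_n) \ge e^{-Cn\varepsilon_n^2}$, (ii) a sieve $\mathcal F_n$ of $\varepsilon_n$--entropy $\lesssim n\varepsilon_n^2$, and (iii) a complement--mass bound $\Pi(\mathcal F_n^c)\le e^{-(C+4)n\varepsilon_n^2}$. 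I fix a cutoff $K_n \asymp n^{1/(2\beta+1)}$ so that the Sobolev bias $\sum_{k>K_n} f_{0,k}^2 \le F^2 K_n^{-2\beta}$ is $o(\varepsilon_n^2)$.

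For the OT prior, the central step is the prior--mass bound. Using independence I split the $L_2$ ball into a low--frequency factor ($k\le K_n$) and a high--frequency factor ($k>K_n$). For the low frequencies I apply the coordinate bound
\[
\Pi(|f_k - f_{0,k}| \le \delta) \ge \frac{2\delta}{\sigma_k}\, h\!\left(\frac{|f_{0,k}|+\delta}{\sigma_k}\right),
\]
together with \ref{H2}: since $|f_{0,k}|\le F$ and $\sigma_k=e^{-(\log k)^{1+\nu}}$, one has $\log(1+|f_{0,k}|/\sigma_k) \lesssim (\log k)^{1+\nu}$, so each coordinate contributes a factor of order $\exp(-c_1 (\log k)^{(1+\nu)(1+\kappa)})$. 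Taking $\delta_k = \varepsilon_n/\sqrt{K_n}$ and summing over $k\le K_n$ yields a total $-\log$ mass of order $K_n(\log n)^{(1+\nu)(1+\kappa)+1}$, which is absorbed in $n\varepsilon_n^2$ provided $\delta$ is chosen large enough (this pins down the exponent $\delta$). For the high frequencies I exploit the super--polynomial decay of $\sigma_k$: on $\{|\zeta_k|\le \sigma_k^{-1/2}\text{ for all } k>K_n\}$ one has $f_k^2 \le \sigma_k$, so $\sum_{k>K_n} f_k^2 \le \sum_{k>K_n}\sigma_k \ll \varepsilon_n^2$, and the complement has probability at most $\sum_{k>K_n} 2c_2\sigma_k^{1/2}$ by \ref{H3}, a rapidly convergent series with negligible tail.

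The sieve is built coordinate--wise, in the style typical of heavy--tailed priors: one allows at most a controlled number of "large" coordinates while imposing a coordinate--wise envelope $|\zeta_k|\le M_k$ with $M_k$ growing appropriately. Condition \ref{H3} gives $\Pi(\mathcal F_n^c)\le \sum_k \bar H(M_k)\lesssim \sum_k 1/M_k$, which can be made smaller than $e^{-(C+4)n\varepsilon_n^2}$ for $M_k$ suitably large, while the $\varepsilon_n$--entropy is bounded by a sum over the effective dimension of terms $\log(\sigma_k M_k/\varepsilon_n)$, which is $O(n\varepsilon_n^2)$ once the super--polynomial cancellation from $\sigma_k$ is used. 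This delivers (ii) and (iii).

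For the truncated Horseshoe prior, the support is effectively $n$--dimensional since $f_k=0$ for $k>n$, so the high--frequency bias is $\sum_{k>n} f_{0,k}^2\le F^2 n^{-2\beta}=o(\varepsilon_n^2)$. The coordinate--wise prior--mass step now uses the explicit sandwich \eqref{eq : HSandwich}: for $|t|\gg \tau=n^{-4}$, $h_\tau(t)\ge c\tau/t^2$, so $-\log h_\tau(f_{0,k})\le 4\log n + 2\log(1/|f_{0,k}|) + O(1)$. Summing over $k\le K_n$ contributes $O(K_n\log n)$, which is precisely the source of the extra $(\log n)^2$ factor in the Horseshoe rate. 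The sieve is handled analogously via the upper half of \eqref{eq : HSandwich} which supplies a Cauchy--type tail $\Pi(|f_k|>J_n)\lesssim \tau/J_n$. The main obstacle throughout both cases is that the ratio $|f_{0,k}|/\sigma_k$ (respectively $|f_{0,k}|/\tau$) can be enormous---super--polynomial in $k$ for the OT prior and polynomial in $n$ for the Horseshoe---so the heavy tails must be used quantitatively: the scaling \eqref{def : OTprior} together with \ref{H2}--\ref{H3} (respectively $\tau=n^{-4}$ together with \eqref{eq : HSandwich}) is exactly what turns what could be a polynomial loss into a merely logarithmic one, and also explains why no moment assumption is needed.
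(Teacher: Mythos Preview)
Your approach via the Ghosal--Ghosh--van der Vaart framework is fundamentally different from the paper's, which never invokes sieves or entropy at all: the paper works directly with the explicit posterior, splitting at the cutoff $K_n=n^{1/(2\beta+1)}$, bounding $\sum_{k\le K_n}\int (f_k-X_k)^2\,d\Pi(f|X)$ via a Laplace--transform argument (Lemmas \ref{lem : lapHTnew} and \ref{lem : lap}), and handling $k>K_n$ by bounding the posterior probability $\Pi(|f_k|>t_k\,|\,X)$ on a high--probability event $A_n$ through \ref{H3} and a direct ratio bound on the marginal posterior (Lemma \ref{lem : tech lower bound}). The prior--mass condition you verify is correct (and indeed used elsewhere in the paper, e.g.\ in the proof of Theorem \ref{thm : lbprob}), but it is not the route the paper takes for Theorem \ref{thm : OTconc}.

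More importantly, your sieve step has a genuine gap that cannot be repaired under \ref{H3} alone. The GGV complement condition requires $\Pi(\mathcal F_n^c)\le e^{-(C+4)n\varepsilon_n^2}$, which is \emph{exponentially} small in $n\varepsilon_n^2\asymp K_n(\log n)^\delta$. With the Cauchy--type tail $\overline H(x)\lesssim 1/x$, the bound $\Pi(\mathcal F_n^c)\le \sum_k 1/M_k$ forces $M_k\gtrsim e^{c\,n\varepsilon_n^2}$ on an unbounded set of $k$'s. But then for each active coordinate $k\le K_n$ the entropy contribution $\log(\sigma_k M_k/\varepsilon_n)$ is at least $c\,n\varepsilon_n^2-(\log k)^{1+\nu}-\log(1/\varepsilon_n)$; the ``super--polynomial cancellation from $\sigma_k$'' you invoke contributes only $(\log k)^{1+\nu}\lesssim (\log n)^{1+\nu}$, which is negligible against $n\varepsilon_n^2$. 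Summing over $k\le K_n$ yields entropy $\gtrsim K_n\cdot n\varepsilon_n^2\gg n\varepsilon_n^2$, so the entropy condition fails. Allowing a ``controlled number of large coordinates'' does not help: to absorb the remaining exceptions into a compact box you still need an envelope $L$ with $\overline H(L)$ exponentially small, hence $L\gtrsim e^{cn\varepsilon_n^2}$, and the entropy of the union over exception sets again blows up. This tension between polynomial tails and the exponential complement--mass requirement is precisely why the paper (and its predecessor \cite{AC}) bypasses GGV entirely; the direct posterior--probability argument is what makes the weak condition \ref{H3} sufficient, not the heavy--tail bound \ref{H2} in a prior--mass/entropy scheme.
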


The proof is given in Section \ref{proof : OTconc}. Theorem \ref{thm : OTconc} establishes that, for the OT priors, the posterior  contraction rate is nearly minimax  over Sobolev balls $\mathcal{S}^{\beta}(F)$. Among special case of OT priors, one can choose for instance a Cauchy density for $h$, as well as  the horseshoe density $h_\tau$; further, this also holds for the truncated version of the horseshoe prior \eqref{def : truncHSprior}. This result strengthens Theorem 1 in \cite{AC}, where a similar upper bound was obtained for OT priors, but with bounded densities and finite second-order moment. The key difference arises from our proof technique: rather than applying Markov's inequality and then working under the posterior expectation, we work directly at the level of the posterior probability. This approach allows us to replace the finite second moment condition by the weaker tail condition \ref{H3}, enabling the use of heavier-tailed priors, which may have unbounded expectation, such as the Cauchy and Horseshoe priors. A simulation study for these heavy-tailed priors is provided in Section \ref{sec : simu OTHS}.  We also note that in \cite{AC} a result for Cauchy priors was provided, but for fractional posteriors only, and not the standard posterior as considered here. 

\begin{remark}[logarithmic factors] \label{remlogs}
According to \cite{cai08}, Theorem 2, separable estimators incur a minimum cost of adaptation in quadratic risk of a factor $(\log n)^{ 2\beta / (2\beta +1)}$  with respect to the minimax rate $n^{-2\beta / (2\beta +1)}$. If one takes a density $h$ with
Student-type tails, then  $\kappa=0$. Since $\nu$ in \eqref{def : OTprior} can be chosen as small as desired, one sees that one can get a logarithmic loss as close as desired to the optimal one which would correspond to $\nu=0$. With the truncated horseshoe prior, we even get this optimal logarithmic factor for separable rules exactly. Removing logarithmic factors altogether requires leaving the class of separable rules, see the Discussion Section below for more on this.
\end{remark}

\begin{remark} When using the truncated Horseshoe in Theorem \ref{thm : OTconc}, we take $\tau=n^{-b}$ with $b=4$. This is for technical reasons, and the proof similarly goes through for any power $b\ge4$. In practice, unless $n$ is very large, one may prefer to take a less conservative choice e.g. $\tau=1/n$ (as we do in the simulations section). We conjecture that the conclusion of Theorem \ref{thm : OTconc} still holds true for this choice. \end{remark}

\subsection{Lower bound for polynomially decaying scalings}\label{subsec : sobo lower}
{Another possible choice of prior to recover $f_0 \in \mathcal{S}^\beta(F)$ is the HT($\alpha$) prior, which is defined as follows. It is a heavy-tailed prior \eqref{def : priork} defined in a similar way as the OT prior, but with {\em polynomially} decaying scaling parameters, for some $\alpha>0$, }
\begin{equation}\label{def : HTprior}
    \sigma_k = k^{-1/2 - \alpha}, \qquad\text{for all } k\geq 1.
\end{equation}
In \cite{AC}, it was shown that this prior leads to adaptation in the over-smoothing case, that is, if $\alpha \ge \beta$,  the posterior contraction rate is the minimax rate over $ \mathcal{S}^\beta(F)$ (up to log factors). If on the contrary, it turns out that $\alpha< \beta$, then since parameter classes such as $\mathcal{S}^{\beta}(F)$ are nested (i.e. $\mathcal{S}^{\beta}(F)\subset\mathcal{S}^{\alpha}(F)$ if $\alpha<\beta$), we have that $f_0\in \mathcal{S}^{\alpha}(F)$, so by applying the over-smoothing result with $\alpha$ in place of $\beta$ (we can since we are now in the case of `matched' regularity of prior and truth), one gets that the posterior contracts at rate (at least) $n^{-2\alpha/(2\alpha+1)}$ up to a log factor in terms of the quadratic risk. However, this does not rule out that the rate is in fact faster. Our next result shows that this is not the case by providing a matching lower bound for the posterior contraction rate in the under-smoothing case $\alpha < \beta$.
\begin{theorem}\label{thm : lbprob}
    Let $f_0 \in \mathcal{S}^{\beta}(F)$ and $\Pi$ be the $\text{HT} \,(\alpha)$ series prior with $ 1/2 < \alpha < \beta$. For $ \delta >0$, define
    \[ \varphi_n :=   n^{- \alpha /(2 \alpha +1)} \log^{-\delta}n.   \]
    Then, for $\delta$ a large enough constant, as $n \to \infty$,
    \[ E_{f_0} \Pi \left[ \{ f \, : \, \| f - f_0 \|_2 \leq \varphi_n \, | \, X\right] \to 0.\]
    Under the weaker condition $0 < \alpha  < \beta,$ the following in-expectation bound holds, as $n \to \infty$,
\[ E_{f_0} \int \| f - f_0 \|_2^2 \, d\Pi(f \, | \, X) \gtrsim  n^{-2 \alpha /(2 \alpha +1)}.\]
     
\end{theorem}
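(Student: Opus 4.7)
The common backbone is the product structure of the posterior. In the sequence model \eqref{def : nseq} with independent priors $f_k=\sigma_k\zeta_k$, the posterior factorises as $d\Pi(f\mid X)=\prod_{k\ge 1} d\Pi_k(f_k\mid X_k)$, where $d\Pi_k(f_k\mid X_k)\propto \exp\bigl(-\tfrac{n}{2}(X_k-f_k)^2\bigr) h(f_k/\sigma_k)\,df_k$. Fix the critical frequency $K_n:=\lfloor n^{1/(2\alpha+1)}\rfloor$, so that $\sigma_{K_n}=K_n^{-1/2-\alpha}\asymp n^{-1/2}$ matches the noise scale. For $k>K_n$, changing variables to $\zeta=f_k/\sigma_k$ yields an exponent $-\tfrac{n}{2}\sigma_k^2\zeta^2+nX_k\sigma_k\zeta$ which, on the $P_{f_0}$-typical event $|X_k|\le C/\sqrt n$ (valid since $|f_{0,k}|\le F k^{-\beta}\ll 1/\sqrt n$ for $k>K_n$), is uniformly $O(1)$ on the compact set $\{|\zeta|\le R_*\}$ with $R_*:=1/\sqrt{n\sigma_k^2}\gg 1$; hence the Radon--Nikodym derivative of the posterior of $\zeta_k$ with respect to the prior density $h$ is $\Theta(1)$ on this set, with additional Gaussian decay beyond. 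This is the key analytic tool throughout.

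\textbf{In-expectation bound.} I would lower-bound the posterior MSE by the posterior variances at high frequencies: $E_{f_0}\int\|f-f_0\|_2^2\,d\Pi(f\mid X)\ge\sum_{k>K_n}E_{f_0}\mathrm{Var}_\Pi(f_k\mid X_k)$. By \ref{H1}--\ref{H2}, $h$ is bounded below by a positive constant on the symmetric pair $[-2,-1]\cup[1,2]$, so the density comparison above gives $\Pi_k(\zeta_k\in[1,2]\mid X_k)\wedge \Pi_k(\zeta_k\in[-2,-1]\mid X_k)\ge c>0$ on the typical event. A standard two-point bound then yields $\mathrm{Var}_\Pi(\zeta_k\mid X_k)\ge 2c^2$ and hence $\mathrm{Var}_\Pi(f_k\mid X_k)\gtrsim \sigma_k^2$ in $P_{f_0}$-expectation, uniformly in $k>K_n$. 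Summing gives $\sum_{k>K_n}\sigma_k^2\asymp K_n^{-2\alpha}\asymp n^{-2\alpha/(2\alpha+1)}$, which is the claimed bound; note that for $\alpha\le 1/2$ and Cauchy-like $h$, the prior is a.s.\ outside $\ell_2$ and the left-hand side is trivially $+\infty$.

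\textbf{In-probability bound.} The restriction $\alpha>1/2$, combined with \ref{H3} via Kolmogorov's three-series theorem, ensures $\sum_k\sigma_k^2\zeta_k^2<\infty$ a.s.\ under the prior (hence under the posterior), so that $\{\|f-f_0\|_2\le\varphi_n\}$ is a non-trivial event. I would show that, with $P_{f_0}$-probability tending to one, the posterior of the random variable $S:=\sum_k(f_k-f_{0,k})^2$ concentrates around its posterior mean $\sum_k\mathrm{Var}_\Pi(f_k\mid X_k)+\sum_k(E_\Pi[f_k\mid X_k]-f_{0,k})^2$, which by the in-expectation argument is of exact order $n^{-2\alpha/(2\alpha+1)}$ (strictly larger than $\varphi_n^2=n^{-2\alpha/(2\alpha+1)}\log^{-2\delta}n$ for $\delta$ large enough). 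By coordinate-independence under the posterior, $\mathrm{Var}_\Pi(S\mid X)=\sum_k\mathrm{Var}_\Pi((f_k-f_{0,k})^2\mid X_k)$, and per-coordinate fourth-moment bounds exploiting the density comparison on $\{|\zeta|\le R_*\}$ give $\mathrm{Var}_\Pi((f_k-f_{0,k})^2\mid X_k)\lesssim \sigma_k^4$ in the finite-moment regime (or $\sigma_k^4 R_*^2$ otherwise, summable precisely when $\alpha>1/2$). This yields $\mathrm{SD}_\Pi(S)/E_\Pi(S)\lesssim K_n^{-1/2}=n^{-1/(2(2\alpha+1))}\to 0$, so Chebyshev's inequality gives $\Pi(S\le\varphi_n^2\mid X)\to 0$ in $P_{f_0}$-probability; the Sobolev tail bound $\sum_{k>K_n}f_{0,k}^2\le F^2 K_n^{-2\beta}=o(n^{-2\alpha/(2\alpha+1)})$, valid because $\beta>\alpha$, ensures the truth's contribution does not dominate the posterior mean.

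\textbf{Main obstacle.} The delicate part is making the per-coordinate posterior--prior density comparison quantitative enough \emph{uniformly} in $k>K_n$, and transferring it into both variance and fourth-moment estimates for the posterior of $(f_k-f_{0,k})^2$. Assumption \ref{H2} permits $h$ to be unbounded at $0$ (horseshoe-like) and only polynomially decaying at infinity (Cauchy), so one cannot invoke a sub-Gaussian or bounded-density framework, and must carefully separate the bulk $\{|\zeta|\le R_*\}$, where the likelihood ratio is $\Theta(1)$, from the Gaussian tail region $\{|\zeta|>R_*\}$, controlling each with a different bound and integrating a union bound against the $P_{f_0}$-tail event $|X_k|\le C/\sqrt n$. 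The requirement $\alpha>1/2$ for part (a) enters precisely as the condition that keeps the Cauchy-case contribution $\sum_k\sigma_k$ summable, thereby preserving a finite posterior second moment and allowing the concentration step to proceed.
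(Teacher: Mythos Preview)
Your in-expectation argument is close in spirit to the paper's: both lower-bound the posterior spread at high frequencies on a typical event (you via $\mathrm{Var}_\Pi(f_k\mid X_k)$, the paper via $E_\Pi[f_k^2\mid X_k]$ combined with $|f_k-f_{0,k}|^2\ge\tfrac12|f_k|^2-|f_{0,k}|^2$). There is, however, a gap in your claim that $|f_{0,k}|\le Fk^{-\beta}\ll 1/\sqrt n$ for \emph{all} $k>K_n$: this requires $\beta/(2\alpha+1)>1/2$, i.e.\ $\beta>\alpha+1/2$, which is strictly stronger than the hypothesis $\beta>\alpha$. The paper handles this by introducing $\mathcal N_n=\{k:|f_{0,k}|>1/\sqrt n\}$, bounding $|\mathcal N_n|\lesssim n^{1/(2\beta+1)}\le K_\alpha$ from the Sobolev constraint, and restricting to $\{k>\widetilde K_\alpha\}\cap\mathcal N_n^c$; your argument is easily repaired in the same way.

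Your in-probability route is genuinely different. The paper does \emph{not} analyse posterior concentration of $S=\|f-f_0\|_2^2$ directly; instead it reduces to a \emph{prior} small-ball probability via a standard evidence-lower-bound device (Lemma~1 of \cite{ic08}): it suffices that $\Pi(\|f-f_0\|_2\le\varphi_n)\big/\bigl(e^{-2n\varepsilon_n^2}\Pi(\|f-f_0\|_2\le\varepsilon_n)\bigr)\to 0$. The denominator is handled by the prior-mass condition from \cite{AC}; the numerator, after projecting onto coordinates $k>K_\alpha$ and using $\|f_0^{[K_\alpha^c]}\|_2\le FK_\alpha^{-\beta}\le\varphi_n$ (here only $\beta>\alpha$ is needed), reduces to the prior small-ball $\Pi\bigl(\sum_{k>K_\alpha}\sigma_k^2\zeta_k^2\le 4\varphi_n^2\bigr)$, dispatched by an Aurzada-type exponential Chebyshev bound (Lemma~\ref{lem : aurzada}). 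The fractional-moment condition $E|\zeta|^{1/(1/2+\alpha)}<\infty$ in that lemma is exactly where $\alpha>1/2$ enters under (H3). No posterior fourth moments are needed.

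Your Chebyshev route is plausible but carries two further burdens. First, under (H3) alone (Cauchy tails) the posterior fourth moment of $\zeta_k$ for $k>K_n$ scales like $R_*^3$, not $R_*^2$: with $h(\zeta)\lesssim\zeta^{-2}$ one has $\int_1^{\infty}\zeta^4 h(\zeta)e^{-n\sigma_k^2\zeta^2/2}\,d\zeta\asymp\int_1^\infty\zeta^2 e^{-n\sigma_k^2\zeta^2/2}\,d\zeta\asymp(n\sigma_k^2)^{-3/2}$. Fortunately $\sigma_k^4R_*^3=\sigma_k n^{-3/2}$ is still summable exactly when $\alpha>1/2$, and the resulting ratio $\mathrm{Var}_\Pi(S)/(E_\Pi S)^2$ still vanishes, so the strategy survives once corrected. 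Second, you must also bound the contribution to $\mathrm{Var}_\Pi(S\mid X)$ from frequencies $k\le K_n$, where $R_*\le 1$ and your density-comparison mechanism breaks down; this requires a separate argument (e.g.\ the Laplace-transform bounds of Lemmas~\ref{lem : lapHTnew}--\ref{lem : lap} to get $E_\Pi[(f_k-X_k)^4\mid X_k]\lesssim n^{-2}\mathrm{polylog}(n)$, plus a Markov step to pass to a high-probability pointwise bound). Both gaps are fixable, but the paper's prior small-ball route is considerably cleaner.
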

The proof is provided in Section \ref{proof : lbprob} and Section C in the supplement \cite{sm}. Theorem \ref{thm : lbprob} shows that in the under-smoothing case $\alpha < \beta$, the posterior cannot concentrate at a rate faster than $n^{-\alpha/(2\alpha+1)}$, which is slower than the minimax rate over $\mathcal{S}^\beta(L) $. This is a sharpness (i.e. lower-bound) result that demonstrates that the adaptive property of the HT$(\alpha$) prior is fundamentally one-sided, meaning that $\alpha$ must be chosen large enough to ensure adaptation. This fact provides a strong incentive to use the OT prior \eqref{def : OTprior}, which, as shown in Theorem~\ref{thm : OTconc}, always guarantees adaptation by ensuring that one is `always in the over-smoothing case' (hence the name). Also, note that the lower bound result holds for any given function in $\mathcal{S}^{\beta}(F)$, and not just for some `least-favourable' ones.
 
The first part of Theorem \ref{thm : lbprob} provides a lower bound for the in-probability contraction rate, which is expressed in the same form as the in-probability upper bound in Theorem \ref{thm : OTconc}. By an application of a Markov-like inequality, it is not hard to check that this result leads to a lower bound for the contraction rate `in expectation' $E_{f_0} \int \| f - f_0 \|_2^2 \, d\Pi(f \, | \, X)$, with a rate $\varphi_n^2$ (including a logarithmic factor). The second part of the statement provides a more precise lower bound, without logarithmic factor.  
Simulations regarding contraction in the under-smoothing case are provided in Section A in the supplement \cite{sm}.

\begin{remark}\label{rem:lbass} 
The additional prior condition $\alpha > 1/2$ appearing in the in-probability lower bound arises from our proof technique, based on Lemma \ref{lem : aurzada} which assumes that the density $h$ in our definition of the heavy-tailed series priors possesses certain fractional moments. An inspection of the proof of Theorem~\ref{thm : lbprob}, reveals that this condition can be relaxed to $\alpha>(1/\rho-1/2)\vee 0$ if we slightly strengthen assumption \ref{H3} to $\overline{H}(x) \lesssim x^{-\rho}$ with $\rho > 1$. For instance, for a Student distribution with at least two degrees of freedom, the prior regularity condition becomes empty. 
\end{remark}

\section{Sparse Besov rates using heavy-tailed priors}\label{sec : besov}
In this section we consider the problem of estimating, from the Bayesian nonparametrics point of view,  a Besov function in $L_{p'}$--losses for $p'\geq1$ in the white noise model, again understood as projected into sequence space; we wish to achieve this through a method (prior) that leads to a posterior distribution {\em adaptive} to the smoothness level of the unknown regression function. To the best of our knowledge, this question has not been addressed for a Bayesian method (or, more generally, for any likelihood-based approach). We first recall the definition of Besov spaces in terms of wavelet coefficients and then introduce the statistical problem and core result of this paper.

\subsection{Besov classes, notation}

Let $S >0$, and $J$ be the smallest integer satisfying $2^J \geq 2S$. Consider $\{ \phi_{Jk} \, : \, k \in \{0 , \dots , 2^J-1\}\} \cup \{ \psi_{jk} \, : \, j \geq J , k \in \{0 , \dots , 2^j-1\} \} $ an orthonormal, boundary corrected, $S$-regular wavelet basis of $L_2([0,1])$ (see \cite{cohen} for a construction). For any $f \in L_p$ (if $p < \infty$) or $f$ continuous (if $p = \infty$), we have,
\[ f = \sum_{k =0}^{2^J -1} \langle f, \phi_{Jk}  \rangle \phi_{Jk}+ \sum_{j \geq J} \sum_{k =0}^{2^j-1} \langle f, \psi_{jk}  \rangle \psi_{jk} \quad \text{in } L_p, \quad 1\leq p \leq \infty. \]
To simplify the notation and to avoid splitting the study of the scaling and wavelet coefficients in the following statistical results, whenever $j \geq J$, we write $f_{jk} := \langle f , \psi_{j k} \rangle $ and whenever $j < J$, we write
\begin{align*}
    f_{jk} &:= \langle f , \phi_{J 2^{j}+k} \rangle , \quad 0 \leq j < J,\quad 0\leq k < 2^j, \\
    f_{-1,0} &:= \langle f , \phi_{J 0} \rangle.
\end{align*}
The $L_p$--norm of the sequence $(f_{jk})_k$ with $k \in \{0 , \dots , 2^j-1 \}$ is denoted as
\begin{equation*}
    ||f_{j \cdot}||_p := \left(\sum_{k=0}^{2^j-1} |f_{jk}|^p\right)^{1/p}.
\end{equation*}
Under conditions on the wavelet basis (all satisfied with a boundary corrected $S$-Daubechies system), we have a Parseval-like quasi-equality for $L_p$--norms, with $p \geq 1$, (see Proposition 4.2.8 in \cite{GineNickl} and a modification thereof for boundary corrected bases p.357)
\begin{equation}\label{parseval}
    \left\lVert \sum_{k =0}^{2^j-1} f_{jk} \psi_{jk} \right\rVert_p \simeq  2^{j(1/2-1/p)} ||f_{j \cdot}||_p.
\end{equation}
Here `$\simeq$' means equality up to a constant (depending only on $p$ and the wavelet system). To describe the regularity of a function $f$ we  use the decay of its coefficients in \eqref{parseval}, for $p,q \in [1, \infty )$ and $0 < s < S$, and define the norm
\begin{equation}\label{def : besovnorm}
    ||f||_{B_{pq}^s} :=  \left( \sum_{j \geq -1} 2^{qj(s + 1/2 - 1/p)} ||f_{j \cdot}||_p^q \right)^{1/q},
\end{equation}
with the usual adaptation whenever $p = \infty$ or $q = \infty$. A Besov-type ball is defined for any $p,q \in [1, \infty]$, $F>0$ and $0<s<S$, as
\begin{equation}\label{def : besovball}
    B_{pq}^s(F) = B_{pq}^s([0,1],F) := \{ f \in L_p([0,1]) \, : \, ||f||_{B_{pq}^s} \leq F \}.
\end{equation}
The corresponding Besov space $B_{pq}^s$ is defined as
\begin{equation}\label{def : besovspace}
    B_{pq}^s := \bigcup_{F > 0} B_{pq}^s(F).
\end{equation}
If the wavelet system is chosen smooth enough, the space $B_{pq}^s$ corresponds to the usual Besov space defined in terms of moduli of continuity (see 4.3 in \cite{GineNickl} for a proof). From the definition of Besov spaces in terms of wavelet coefficients as in \eqref{def : besovnorm}, one can deduce the following embedding properties
    \begin{align}\label{eq : embeddingdual}
        B_{pq}^s &\subset B_{p'q}^{s'} \quad \text{whenever } p' > p \text{ and } s'-1/p' = s - 1/p ,\\
        \label{eq : embeddingtrivial}
        B_{pq}^{s} &\subset B_{pq'}^{s'} \quad \text{whenever } s' < s \text{ or when } s'=s \text{ and } q' \geq q.
    \end{align}
    In particular, if $s - 1/p >0$, $B_{pq}^s \subset B_{\infty \infty}^{s'}$ is included in the space of continuous functions. 

\subsection{Regression in Besov spaces, sparse rates, contraction for OT decay}

Following the same approach as in the Hilbert-Sobolev case, if the signal belongs to a Besov space $B_{pq}^s$, from the definition of the space in terms of wavelet coefficients it is natural to estimate it from the projections of the white noise model into the (now, double-indexed wavelet) basis, that is,
\begin{equation}\label{def : nseqwav}
    X_{jk} = f_{0,jk} + \frac{1}{\sqrt{n}} \xi_{jk}, \quad j\geq -1, \quad 0 \leq k <2^{j},
\end{equation}
where $\{ \xi_{jk}\}$ are independent $\mathcal{N}(0,1)$ variables. The next condition ensures that $B_{pq}^s  \subset L_{p'}$ so that the problem of estimating the true unknown function $f_0 \in B_{pq}^s$ in $L_{p'}$--norm is well-defined (also, recall that we only consider positive smoothness indices $s>0$) 
\begin{equation}\label{indices}
s - 1/p + 1/p'>0.
\end{equation}
Minimax rates in this sequence setting and Besov smoothness were derived by Donoho and Johnstone \cite{djkp97} (announced in \cite{djkp95})
, building up from the seminal work of Nemirovskii \cite{nem85}; the case of the density estimation model was treated in \cite{djkp96}. We refer to the monograph \cite{hkpt98}, Section 10.4, for a detailed discussion. As it turns out, the minimax rate for this estimation problem crucially depends on the parameter $p'$ of the  loss function. To describe this rate, we now introduce some notation -- we note that for simplicity we will not keep track on the precise power in the logarithmic factors appearing in the rates; this slightly simplifies the discussion compared to \cite{djkp97} (who also treat the even more general case of Besov losses)--.

 We particularize two regions where the statistical behavior differs
\begin{align*}
    \mathcal{R} :=  \{(p,p') \, : \,  p' < (2s +1)p  \} \qquad \text{and}
 \qquad \mathcal{S} := \{ (p,p') \, : \,  p' \geq (2s+1)p \}.
 \end{align*}

The following index $\eta$ specifies in which region the indices 
$(p,p')$ are
\begin{equation}\label{def : indexeta}
    \eta := sp - \frac{p'-p}{2},
\end{equation}
as indeed $\eta>0$ if and only if $(p,p')\in \mathcal{R}$, and $\eta \leq 0$  if and only if  $(p,p')\in\mathcal{S}$. 
 Consider
\begin{equation}\label{def : s'}
    s' := s -1/p +1/p',
\end{equation}
which is positive by \eqref{indices}, and define the rate $\varepsilon_n := n^{-r}$, where, for $\eta$ as in \eqref{def : indexeta},
\begin{equation}\label{def : rateexp}
    r := \begin{cases}
    s / (1 + 2s), &\quad \text{if $\eta > 0$} \\
    s' / (1 + 2(s -1/p)), & \quad  \text{if $\eta \leq 0$}
\end{cases}.
\end{equation}

In the following lines, we provide some insight into the appearance of the elbow in the rate \eqref{def : rateexp} and the distinction between the two regions. Note that this rate is known to be minimax (up to logarithmic factors) over $B_{pq}^s$ in the continuous case $s > 1/p$. We remark that functions in $B_{pq}^s$ can be "non-homogeneously" smooth; that is, as $p'$ increases, it becomes possible to induce increasingly large perturbations of their $L_{p'}$--norm using only a small number of wavelet coefficients. Consequently, the statistical problem of reconstructing $f_0 \in B_{pq}^s$ from noisy observations of its wavelet coefficients becomes more difficult as $p'$ increases. The region $\mathcal{R}$ is referred to as the {\em regular} region: for $(p,p')$ in $\mathcal{R}$, the rate \eqref{def : rateexp} corresponds to the standard minimax rate encountered in nonparametric statistics. It is worth noting also that $\mathcal{R}$ can further be divided into two zones. The first one is the `homogeneous' zone ${p' \leq p}$, where the aforementioned perturbative effect does not occur; therein, {\em linear} estimators, understood as linear functionals of the empirical measure $n^{-1} \sum \delta_{{X_i}}$, are minimax-optimal. The second zone, called `intermediate' or also `non-homogeneous regular' has $p < p' < (2s+1)p$, the perturbative effect does take place; therein, linear estimators are known to be suboptimal; in fact, the linear-minimax rate here is polynomially slower than the global minimax rate, and is of order $n^{-s'/\{2s'+1\}}$, for $s'$ as defined in \eqref{def : s'}. When $p' < (2s + 1)p$, the perturbative effect is not strong enough to cause a discrepancy in the global minimax rate. In contrast, in the so-called {\em sparse} region $\mathcal{S}$, defined by $p' \geq (2s + 1)p$, the minimax rate becomes polynomially slower than the usual nonparametric rate from the regular region (although still faster than the corresponding linear-minimax rate for $p' < \infty$). In $\mathcal{S}$, the most difficult functions in $B_{pq}^s$ to estimate in the $L_{p'}$-norm exhibit highly localized irregularities, corresponding to only a few wavelet coefficients of large magnitude (hence the term "sparse"), which makes the statistical problem significantly harder.

\begin{theorem} \label{thm : besov sparse}
    Let $0 < s < S$, $p,q \in [1,\infty]$, $1 \leq p' < \infty$, $F> 0$ and suppose \eqref{indices}. 
     
    Consider observations from model \eqref{def : nseqwav} with an unknown function $f_0 \in B_{pq}^{s}(F)$ for some $F>0$. 
    Let $\Pi$ be the OT wavelet series prior sampling coefficients $f_{jk}$ independently as
    \begin{equation} \label{priorcodouble}
    f_{jk} = 2^{-j^{2}} \zeta_{jk} ,
    \end{equation}
     where $( \zeta_{jk} )$ are i.i.d. copies of a heavy-tailed random variable $\zeta$ satisfying conditions (H1)--(H2)--(H3). Then, for $r$  given by \eqref{def : rateexp},
        \[E_{f_0} \Pi\left[ \{ f \, : \, \|f-f_0 \|_{p'} \geq \mathcal{L}_n n^{-r} \} \, | \, X \right] \to 0,\]
as $n \to \infty,$    where $\mathcal{L}_n = \log^{\delta} n$ for some $\delta >0$.
\end{theorem}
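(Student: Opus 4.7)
The plan is to exploit the product structure of the OT prior and of the likelihood in the sequence model \eqref{def : nseqwav}, and to lift coefficient-wise posterior concentration statements to a bound on $\|f - f_0\|_{p'}$ via the wavelet norm equivalence \eqref{parseval}. Under the prior \eqref{priorcodouble}, the posterior on $(f_{jk})$ is a product of marginals $\pi_{jk}(f_{jk}\mid X_{jk}) \propto \exp\bigl(-\tfrac{n}{2}(X_{jk} - f_{jk})^2\bigr)\, \sigma_j^{-1} h(f_{jk}/\sigma_j)$, with $\sigma_j = 2^{-j^2}$. I would fix a resolution cutoff $J_\ast$ adapted to the zone (with $2^{J_\ast}\simeq n^{1/(2s+1)}$ in the regular region $\mathcal{R}$ and $2^{J_\ast}\simeq n^{1/(2(s-1/p)+1)}$ in the sparse region $\mathcal{S}$, up to logarithmic factors), and split $\|f - f_0\|_{p'}$ via \eqref{parseval} and a triangle inequality into a low-frequency part ($j \leq J_\ast$) and a high-frequency part ($j > J_\ast$).

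For $j > J_\ast$, the prior scale $\sigma_j = 2^{-j^2}$ is super-polynomially small, so the marginal posterior concentrates essentially on $\{|f_{jk}|\lesssim \sigma_j\cdot \text{polylog}(n)\}$, extending the tail arguments used for Theorem~\ref{thm : OTconc} in Section~\ref{proof : OTconc}; summing over $j > J_\ast$ then shows that the posterior draw contributes only a negligible amount in $L_{p'}$. The corresponding high-frequency tail of $f_0$ itself is controlled by its Besov smoothness via \eqref{def : besovnorm} and the embeddings \eqref{eq : embeddingdual}-\eqref{eq : embeddingtrivial}: in $\mathcal{R}$ this tail is of the expected order $2^{-J_\ast s}$, and in $\mathcal{S}$ the analogous bound is obtained after passing to $B_{p'q}^{s'}$ with $s'$ as in \eqref{def : s'}.

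For $j \leq J_\ast$, I would run a coefficient-wise analysis of $\pi_{jk}$. The key lemma, extending the heavy-tailed computations of \cite{AC} to general $L_{p'}$ and to densities satisfying only (H1)-(H2)-(H3) (no moment assumption), shows that up to logarithmic factors and on a high-probability data event: if $|X_{jk}|$ lies below a threshold of order $\sqrt{\log n/n}$ (small-signal regime), then $\pi_{jk}$ concentrates on $\{|f_{jk}|\lesssim \sqrt{\log n/n}\}$; if $|X_{jk}|$ is above the threshold (large-signal regime), $\pi_{jk}$ concentrates on $\{|f_{jk} - X_{jk}|\lesssim \sqrt{\log n/n}\}$, and hence on $\{|f_{jk}-f_{0,jk}|\lesssim \sqrt{\log n/n}\}$. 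This mimics the behaviour of a hard wavelet thresholding estimator at threshold $\sqrt{\log n/n}$, which is precisely what drives adaptive minimax rates in the sparse zone.

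The main obstacle, and where the proof departs from the $L_2$ and $L_\infty$ settings of \cite{AC}, is aggregating these per-coefficient bounds into the elbow rate \eqref{def : rateexp}, especially in the sparse region. At each level $j\leq J_\ast$, I would split the indices $k$ into \emph{large} coefficients $|f_{0,jk}|>T_j$ and \emph{small} ones, and use the weak-$\ell_p$ control $\#\{k : |f_{0,jk}|> T\} \lesssim T^{-p}\|f_{0,j\cdot}\|_p^p$ coming from $f_0\in B_{pq}^s(F)$. The small-coefficient $L_{p'}$ contribution is then of order $(\sqrt{\log n/n})^{p'}$ times the number of small indices, while the large-coefficient contribution is bounded using the marginal posterior concentration around $f_{0,jk}$ plus the sparsity count. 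Optimizing the level-dependent threshold $T_j$ and recombining over $j\leq J_\ast$ via \eqref{parseval} and \eqref{def : besovnorm} should yield the standard rate $n^{-s/(2s+1)}$ in $\mathcal{R}$ and the sparse rate $n^{-s'/(1+2(s-1/p))}$ in $\mathcal{S}$, with the sign of the index $\eta$ in \eqref{def : indexeta} exactly dictating which of the two bounds dominates. A final union bound over the polynomially many $(j,k)$ with $j\leq J_\ast$ ensures that the exceptional data events have probability $o(1)$ under $P_{f_0}^{(n)}$, yielding $\mathcal{L}_n=\log^\delta n$ for some $\delta>0$.
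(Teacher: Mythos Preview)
Your overall strategy (product posterior, wavelet norm equivalence \eqref{parseval}, tail argument for high frequencies, moment bounds for low frequencies, large/small-signal split) matches the skeleton of the paper's proof. However, the single-cutoff scheme you describe has a genuine gap in the non-homogeneous regime $p'>p$, and the paper in fact needs \emph{two} cutoffs $J_0<J_1$ with $2^{J_0}=n^{1-2r}$ and $2^{J_1}\asymp n^{1/(2(s-1/p)+1)}$.

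First, your high-frequency bias bound fails in the regular zone when $p'>p$. From $\ell_p\subset\ell_{p'}$ one only gets $\|f_0^{[J_\ast^c]}\|_{p'}\lesssim 2^{-J_\ast s'}$ with $s'=s-1/p+1/p'<s$, not $2^{-J_\ast s}$ as you write; with $2^{J_\ast}=n^{1/(2s+1)}$ this is $n^{-s'/(2s+1)}$, polynomially larger than the target $n^{-s/(2s+1)}$. The paper avoids this by placing the high-frequency cutoff at the larger level $J_1$ in \emph{both} zones, for which $2^{-J_1 s'}\le n^{-r}$ does hold.

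Second, and more seriously, your small-signal lemma is too coarse for the intermediate band $J_0<j\le J_1$. Bounding each small-signal posterior coefficient by $\sqrt{\log n/n}$ and summing $2^{j(p'/2-1)}$ over all $k\le 2^j$, $j\le J_1$ yields $(2^{J_1}/n)^{p'/2}=n^{-p'(s-1/p)/(2(s-1/p)+1)}$, which exceeds the sparse target $n^{-rp'}=n^{-p's'/(2(s-1/p)+1)}$ for every finite $p'$. The paper's fix is that the tail argument you reserve for $j>J_\ast$ (Lemma~\ref{lem : tech lower bound} plus (H3), exploiting $\sigma_j=2^{-j^2}$) already applies for \emph{all} $j>J_0$ on the event $\{|X_{jk}|\lesssim\sqrt{\log n/n}\}$, and delivers the much sharper, level-dependent control $\Pi(|f_{jk}|>2^{-j}v_n\mid X)$ summable over $(j,k)$; this is what kills the small-signal contribution. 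The remaining large-signal indices $\mathcal{N}_n=\{J_0<j<J_1,\ |f_{0,jk}|>1/\sqrt n\}$ are handled via Markov's inequality and the Laplace-transform moment bound (Lemmas~\ref{lem : lapHTnew} and~\ref{lem : lap}), combined with the elementary trick $1\le n^{p/2}|f_{0,jk}|^p$ on $\mathcal{N}_n$, which produces the sum $n^{(p-p')/2}\sum_{J_0<j<J_1}2^{-j\eta}$ and yields the elbow in \eqref{def : rateexp} according to the sign of $\eta$. Your weak-$\ell_p$ count and level-dependent threshold optimisation are in the right spirit for this last piece but are not needed: the fixed threshold $1/\sqrt n$ together with the Besov bound $\|f_{0,j\cdot}\|_p\le F\,2^{-j(s+1/2-1/p)}$ suffices.
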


The proof of this result is provided in Section D in the supplement \cite{sm}. Theorem \ref{thm : besov sparse} shows that OT series priors achieve complete minimax adaptation, up to logarithmic factors, over Besov-type balls {\em simultaneously} over the regular ($\mathcal{R}$) and sparse ($\mathcal{S}$) regions. To the best of our knowledge, this is the first Bayesian procedure proven to achieve such rates. In contrast, random Gaussian series, similar to linear estimates, are known to be suboptimal in this setting (see \cite{adh21}, Theorem 4.1). Our results also significantly extend  previous work  \cite{AC} and \cite{ce24} on heavy-tailed priors, which beyond classical Sobolev/H\"older spaces considered only the case of Besov spaces $B_{pp}^s$ and anisotropic Besov spaces $B_{pp}^{\mathbf{s}}([0,1]^d)$ (this time in $d$ dimension) with $p < p' = 2$, in particular being restricted to the inhomogeneous part of the regular region $\mathcal{R}$, where linear minimax rates are slower than global minimax rates, but the latter is the usual rate.

\begin{remark}
    The scaling coefficient $2^{-j^2}$ in \eqref{priorcodouble} is the analogue, in double-index notation, of the scaling $e^{-(\log{k})^{1+\nu}}$, with $\nu=1$ therein, and up to a constant factor in the exponent. As anticipated below \eqref{def : OTprior}, and similarly to the single-index OT-prior  considered in Section \ref{sec : Sobolev}, the results of Theorem \ref{thm : besov sparse} still hold, with a similar proof, with scaling coefficients $2^{-j^2}$ in the two-index version \eqref{priorcodouble} replaced by $2^{-j^{1 + \nu}}$ for some $\nu > 0$, although for simplicity to keep the notation minimal, we provide the proof only for the former. In the same spirit, in Theorem \ref{thm : besov sparse} we do not track the logarithmic factors as precisely as in Theorem \ref{thm : OTconc}. While we believe that such factors could be made explicit with minor modifications of the proof (introducing log terms in $J_0$ and $J_1$), we do not pursue this refinement here in order to keep the argument as simple as possible.
\end{remark}

\begin{remark}
For simplicity in the present paper we do not consider the case of the supremum norm ($p' = \infty$). Let us briefly sketch how one can extend our results to this case, following the approach for the supremum norm introduced in \cite{AC} for H\"older-type spaces. 
  For any $(p,q)$, the embedding $B_{p,q}^s \subset B_{\infty,\infty}^{s'}$, given by \eqref{eq : embeddingdual} and \eqref{eq : embeddingtrivial} shows that estimating a $B_{pq}^s$ function in $L_{\infty}$-norm is not harder than estimating a $B_{\infty \infty}^{s'}$ function. Noting that $B_{\infty \infty}^{s'}$ corresponds to the usual Hölder-Zygmund space whenever $s' \notin \mathbb{N}$, one can use techniques of \cite{AC} Theorem 4 (contraction in sup-norm for Hölder-smooth functions) to obtain the desired rate. This rate corresponds, {\em up to a logarithmic factor}, to $n^{-s'/(2s'+1)}$,  regardless of $p$ and $q$ (since $p' = \infty$, we have $s' = s-1/p$, which is positive by \eqref{indices}, and $\eta < 0$ for all $p \geq 1$, thus the target rate - up to log terms - defined in \eqref{def : rateexp} is $n^{-s'/(2s'+1)}$). Note that following the argument in  \cite{AC} will require the additional moment assumption $E|\zeta| < \infty$; we  believe that this condition could possibly be omitted albeit under a slightly more technical argument. 
\end{remark}

\section{A simulation study} \label{sec:sims}
In this section, we present numerical simulations that corroborate and illustrate our theory. {Additional simulations illustrating the lower bound obtained in Theorem \ref{thm : lbprob} are  presented in Appendix A \cite{sm}. }

\subsection{White noise model regression under OT and Horseshoe priors}\label{sec : simu OTHS}

We consider the white noise regression model \eqref{def : GWN}, expanded in the orthonormal basis $\varphi_k(t)=\sqrt{2}\cos(\pi(k-1/2)t)$ 
leading to the normal sequence model \eqref{def : nseq}. As underlying truth, we use a function with coefficients with respect to $(\varphi_k)$ given by $f_{0,k}=k^{-3/2}\sin(k)$. In particular, this true function can be thought of as having Sobolev regularity (almost) $\beta=1$. 

We consider four priors on the coefficients of the unknown. The first three are of the form $f_k=\sigma_k\zeta_k$ for i.i.d. $\zeta_k$, for the same choice of scalings  $\sigma_k=e^{-(\log k)^{3/2}}$, but different   distributions of $\zeta_1$:
\begin{itemize}
\item Student distribution with 3 degrees of freedom, leading to a Student OT prior;
\item Cauchy distribution, leading to the Cauchy OT prior;
\item Horseshoe distribution, leading to the Horseshoe OT prior.
\end{itemize}
The fourth prior is the truncated Horseshoe prior as in \eqref{def : truncHSprior}, with $\sigma_k=\tau=1/n$, where $n$ is the noise precision parameter in \eqref{def : nseq}.

Note that, throughout our simulations for the OT-prior we use the exponent $\nu=1/2$ in \eqref{def : OTprior}. While our results in Section \ref{sec : Sobolev} show that, at least under Sobolev regularity of the truth, choosing a smaller $\nu>0$ improves the logarithmic terms in the rate of contraction (the polynomially decaying terms remain unchanged), in reality and for finite noise precision levels $n$, it is expected that for too small values of $\nu$ the performance of the posterior will deteriorate, as the prior becomes increasingly similar to an HT($1/2$) prior, which our results show leads to suboptimal contraction rates for $\beta>1/2$. Therefore, we only tried the choices $\nu=1/2$ and $\nu=1$ which led to similar results with a slight edge for the former, hence our choice.

Due to independence, the posterior decomposes into an infinite product of univariate posteriors. For all considered priors, we use Stan, with random initialization uniformly on the interval $(-2,2)$, to sample each of the univariate posteriors \cite{stan}. One could also use simple random walk type algorithms, however, Stan is particularly convenient due to its simple implementation and adaptive tuning. In all four cases, we truncate at $K=200$, which, for the considered regularities of the truth and the priors, suffices for the truncation error to be of lower order compared to the estimation error, for the considered noise levels, which range from $n=10^3$ to $n=10^5$ (in the specific case  of the {truncated}  Horseshoe prior case with $\tau_k=1/n$, this truncation point in fact appears somewhat earlier than the truncation point $K=n$ {allowed} in the last part of Theorem \ref{thm : OTconc}).

In Figure \ref{fig-hs}, we present {the posterior means} as well as 95\% credible regions for various noise levels, computed by taking the 95\% out of the 4000 draws (after burn-in/warm up) which are closest to the mean in $L_2$--sense. The three OT priors appear to perform very well at all noise levels, with the two heavier tailed priors (Cauchy and Horseshoe) leading to slightly broader credible regions. The Horseshoe prior with scaling $\tau=1/n$,  appears to be slightly overconfident in all but the lowest noise levels. This was in fact expected based on the intuition on scaled heavy-tailed priors outlined in \cite[Section 2]{AC} in the univariate normal mean model: for small scalings, posterior means {tend to strongly shrink towards $0$}, while for large scalings they tend to preserve the data. Since for this prior, all coefficients (even in low frequencies) correspond to a small scaling $\tau=1/n$, all observations in frequencies with small signal (how small only depends on $n$, and is independent of the frequency), are {set very close to $0$}. As a  result there is very little variance in the posterior. On the contrary, the OT-type priors' scalings for the first few frequencies remain large, before the faster than polynomial decay {in the scaling parameters} eventually kicks in, leading to important frequencies  getting significant values under the posterior (see also  \cite[Section 4]{AC} for more discussion on this).  
This in turn leads to posteriors exhibiting more variability in the OT priors case.

\begin{figure}
    \centering
     \includegraphics[width=0.97\textwidth]{./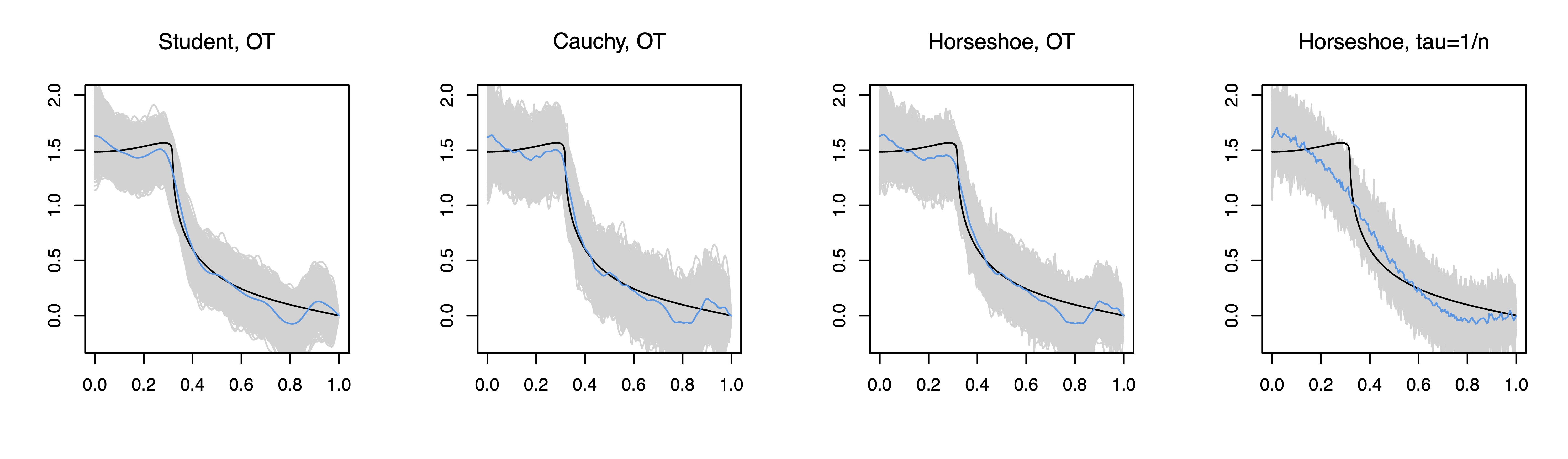}\\
    \includegraphics[width=0.97\textwidth]{./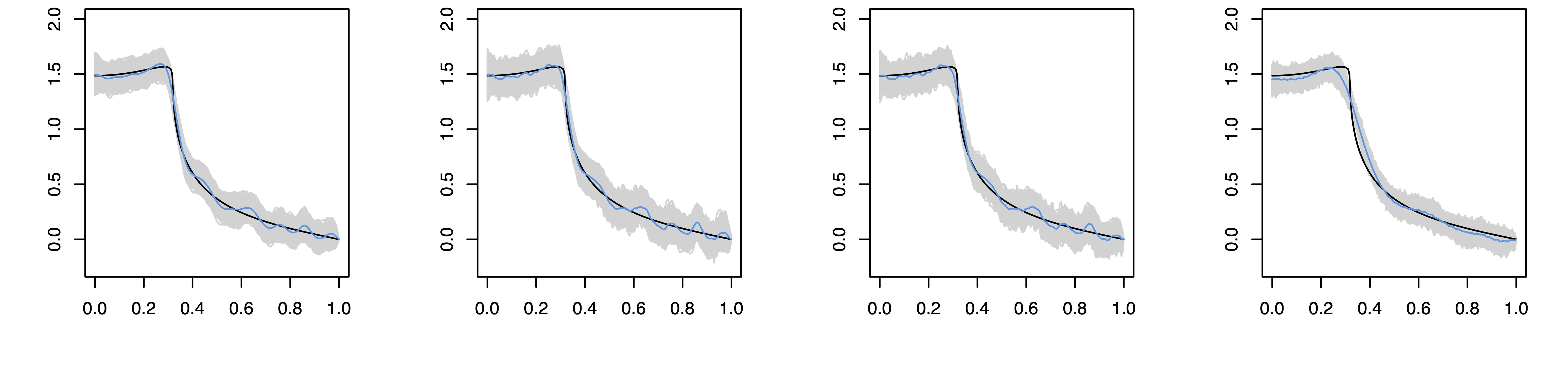}\\
    \includegraphics[width=0.97\textwidth]{./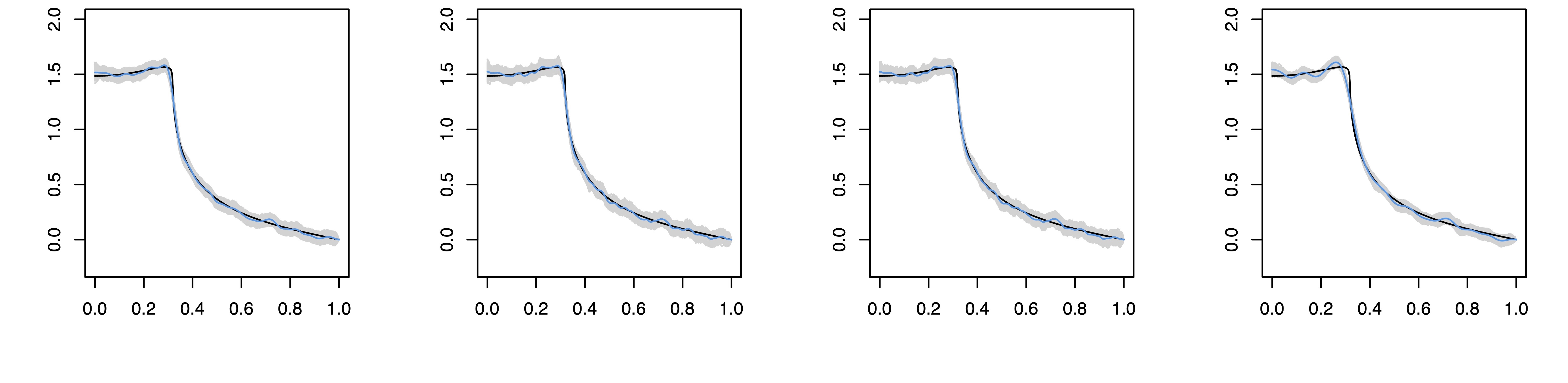}\\
    \caption{White noise model: true function (black), posterior means (blue), 95\% credible regions (grey), for $n=10^3, 10^4, 10^5$ top to bottom and for the four considered priors left to right.}
    \label{fig-hs}
\end{figure}

\subsection{Performance for spatially inhomogeneous truths when varying the loss} \label{sec : simu losses}

In this section we consider four spatially inhomogeneous true functions introduced in \cite{DJ94} for testing wavelet thresholding algorithms. While these functions are not constructed to be `least favourable' in a certain Besov class (we do this in the next subsection), they are prototypes of spatially inhomogeneous signals encountered in practice. Hence, even though not designed to illustrate the change in the performance of the studied priors in terms of error norms when crossing different zones (regular, intermediate and sparse), these functions can illustrate the suitability of the studied priors for typical spatially inhomogeneous functions. We consider the full range of $L_{p'}$--losses, thereby considerably extending the simulation setting for OT priors in \cite[Section E.2]{AC}, where only $L_2$--loss was considered.

The four functions can be seen in Figure \ref{fig-dj94}. We expand the functions in the Daubechies-8 maximally symmetric wavelet basis \cite{daubechiesbook} (Symmlet-8) and add standard normal noise on each wavelet coefficient. We use 2048 coefficients and coarse level 5, while each function has been appropriately rescaled to get a signal-to-noise ratio (as captured by the ratio of the $L_2$--norms of the function to the noise) approximately equal to 7, as in \cite{DJ94}. Hence, we have a normal sequence model as in \eqref{def : nseqwav} with $n=1$. Analysis and synthesis of the wavelet expansions is performed in Wavelab850 \cite{wavelab}.

\begin{figure}[htbp]
    \centering
    \includegraphics[width=0.4\textwidth]{./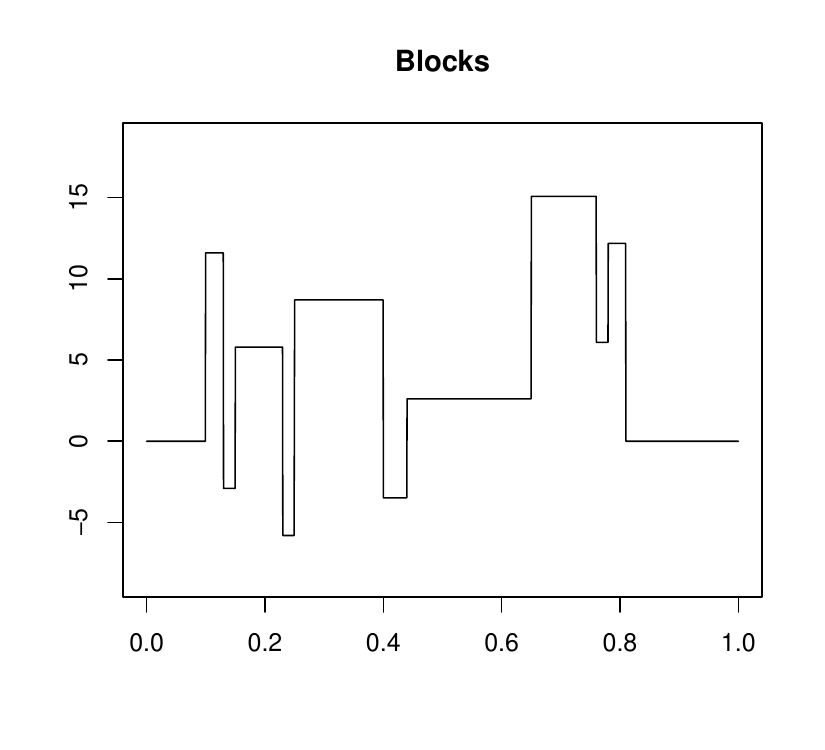}
    \includegraphics[width=0.4\textwidth]{./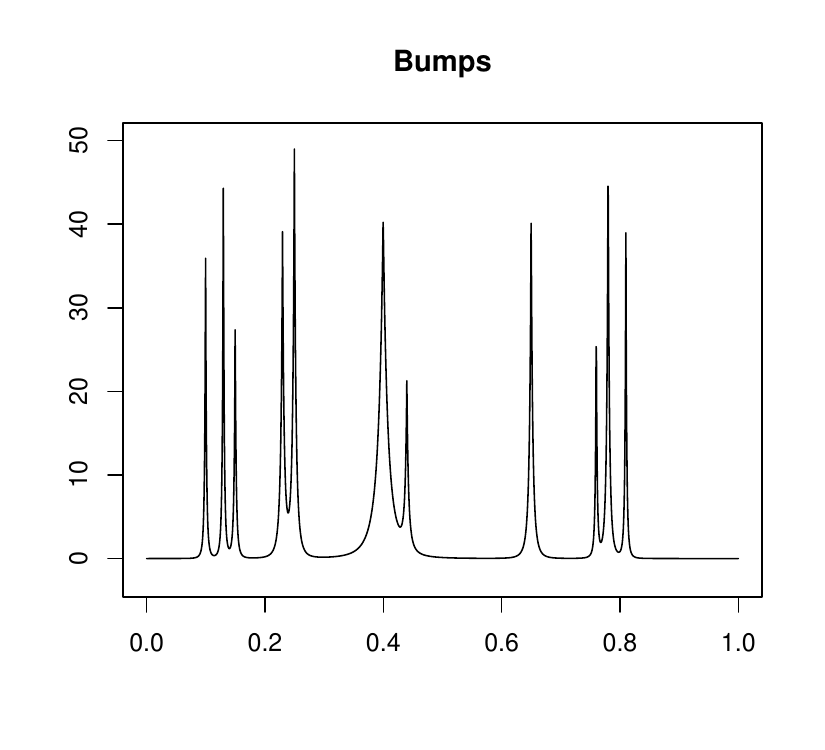}\vspace{-.7cm}
     \includegraphics[width=0.4\textwidth]{./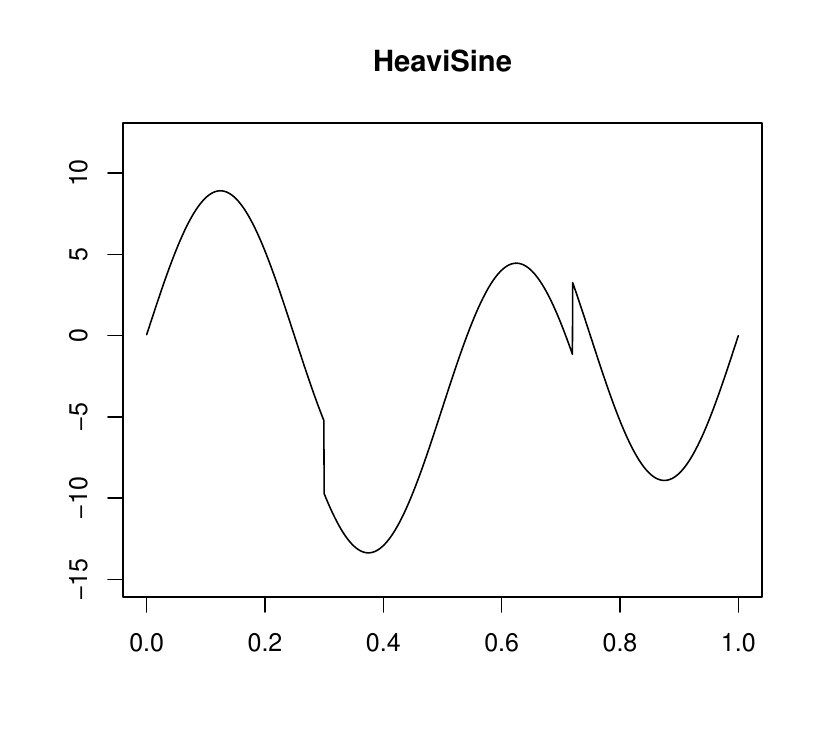}
 \includegraphics[width=0.4\textwidth]{./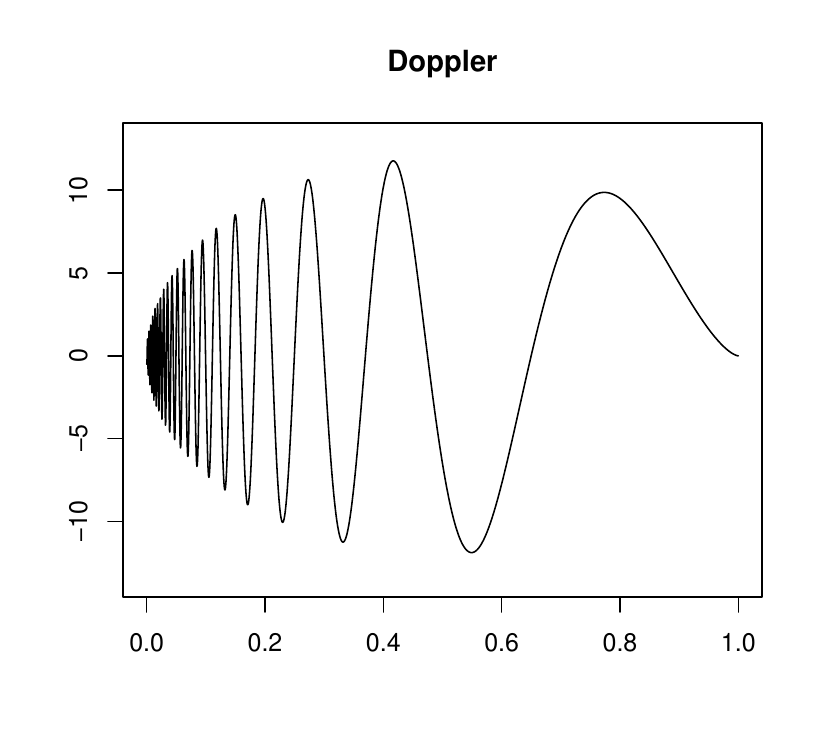}\vspace{-.5cm}
    \caption{Spatially inhomogeneous true functions.}
    \label{fig-dj94}
 \end{figure}
 
Identically to \cite[Section E.2]{AC}, we consider priors on the wavelet coefficients of the form $f_{jk}=\sigma_j\zeta_{jk}$ for i.i.d. $\zeta_{jk}$, with the following choices of the scalings $\sigma_{j}$ and the distribution of $\zeta_{00}$:
\begin{itemize}
\item Gaussian hierarchical prior: $\sigma_j=\tau 2^{-j(1/2+\alpha)}$ with $\tau\sim\text{Inv-Gamma}(1,1)$, $\alpha\sim {\rm Exp}(1)$, $\zeta_{00}$ standard normal;
\item Cauchy OT prior: $\sigma_j=2^{-j^{1+\nu}}$, with $\nu=1/2$, $\zeta_{00}$ distributed according to the standard Cauchy distribution.
\end{itemize}
In addition, we consider frequentist estimation using the hybrid version of SureShrink, which is a soft wavelet thresholding algorithm developed by Donoho and Johnstone to be optimally smoothness-adaptive over Besov spaces, even for extremely sparse signals, see \cite{DJ95, DJ94}. We comment about the Gaussian hierarchical prior that any sequence of Gaussian process priors is known to be limited by the \emph{linear} minimax rate (at least in square loss) which is suboptimal except in the regular homogeneous regime, \cite[Theorem 4.1]{aw21}. For $\alpha$-regular Gaussian series priors and $p\leq2$, the linear minimax rate is, for example, attained for regularity $\alpha=\beta-1/p+1/2$, or by appropriately rescaling provided the prior is not too undersmoothing, \cite[Section 5]{adh21}. As our initial experiments using the Doppler truth showed that a (more conventional) Gaussian hierarchical prior with a hyper-prior only on the scaling $\tau$ performs poorly unless the regularity $\alpha$ is in a restricted window, to avoid manually tuning $\alpha$ for each truth, we chose to use a hyper-prior on both parameters $\alpha$ and $\tau$.

We refer to \cite[Section E.2]{AC} for details on the Markov chain Monte Carlo algorithms employed to sample the posteriors, as well as for the resulting visualizations. The implementation of hybrid SureShrink was done using the WaveShrink function of Wavelab850 \cite{wavelab}. 

To estimate the errors of the three considered methods, we averaged errors over 100 realizations of the data. In particular, for the two priors, we consider two types of errors. The first one is the $L_{p'}$--error of the posterior means, hence after averaging we estimate the error 
\[E_{f_0}||\hat{f}-f_0||_{p'},\]
where $\hat{f}$ is the posterior mean. The same type error is computed for $\hat{f}$ being the thresholding estimator. The second type of error, computed only in the two Bayesian settings, estimates
\[E_{f_0}E_{\Pi[\cdot|X]}||f-f_0||_{p'},\]
where the inner expectation is estimated by taking the average of the $L_{p'}$--errors of the Markov chain samples after burn-in, and the outer by averaging over the 100 data realizations. The latter error captures the contraction of the whole posterior around the truth.

In Figure \ref{fig-DJ94errors} we show estimation errors in $L_{p'}$--loss on the left and contraction-type errors on the right, for $p'=1,2,3,4,6$. Errors in supremum-loss are displayed in Table \ref{tab:spinh}. The Cauchy OT prior, convincingly outperforms the Gaussian hierarchical prior in all losses for the more `jumpy' truths (Blocks and Bumps). For the smoother truths (Doppler and HeaviSine), the Cauchy prior has significantly better errors for $L_{p'}$-losses with smaller $p'$ (especially contraction-type errors), while as $p'$ increases the errors become more even, with the Gaussian prior slightly outperforming Cauchy OT in supremum-loss {(this may only be true for some specific signals, as it is unclear whether the considered hierarchical Gaussian procedure is minimax adaptive in supremum norm;  by analogy with the negative result
from [17] mentioned in the introduction, it may be the case that if only one of the two parameters is drawn at random, then the rate may in fact be suboptimal)}. This is despite the fact that, as can be seen in \cite[Figures E.6 and E.7]{AC}, the Gaussian prior fails to denoise the signals and the Cauchy-based posteriors are visually significantly better. The performance of the Cauchy-OT prior matches and often surpasses that of the hybrid SureShrink algorithm. We stress that for the Cauchy-OT prior we did not tune any parameter; in particular, the choice of $\nu=1/2$ was fixed for all experiments in this work. This is contrast to the SureShrink algorithm which requires choosing the threshold level in a data-driven way.

\begin{figure}
    \centering
     \includegraphics[width=0.48\textwidth]{./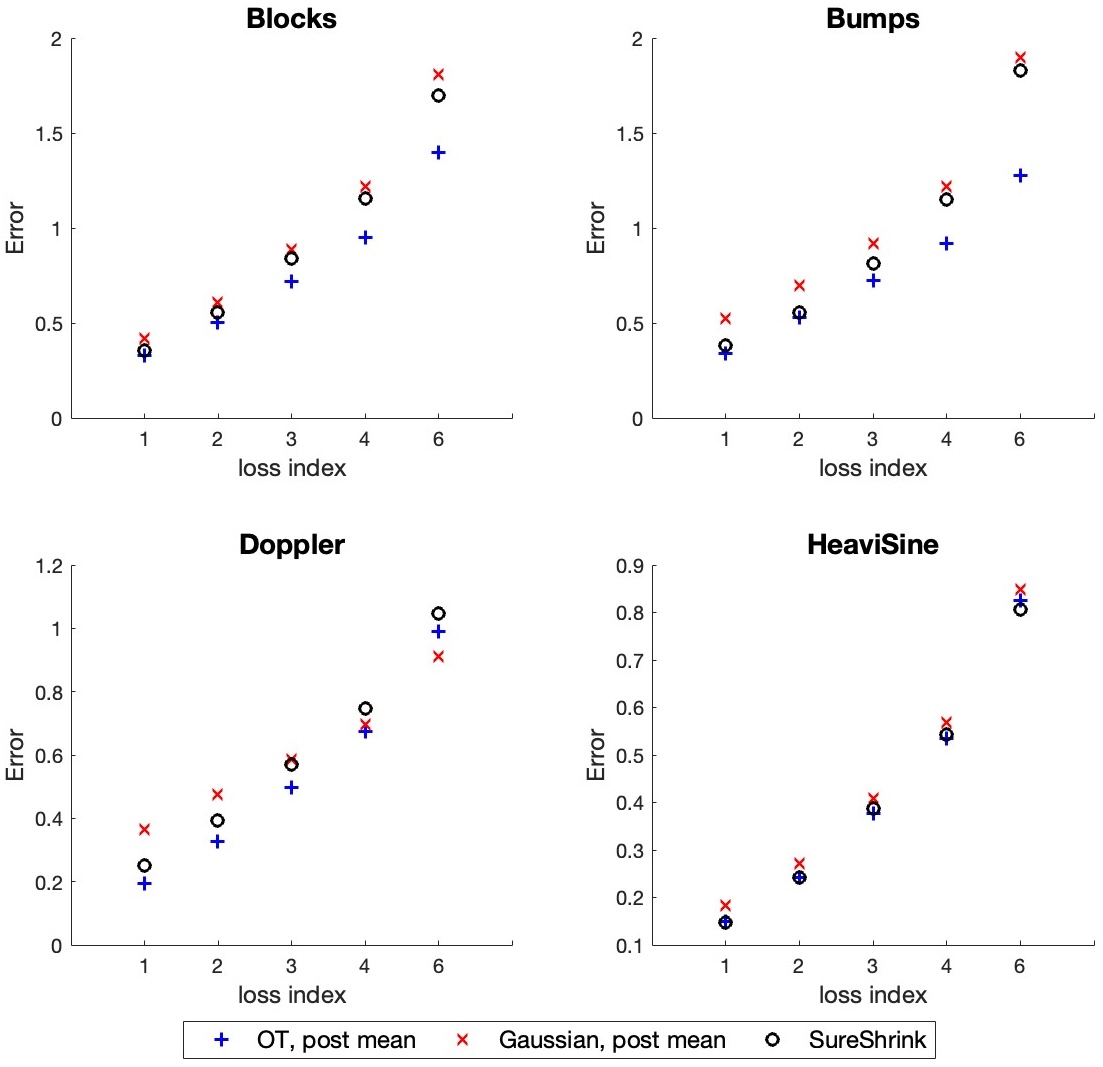}\qquad
     \includegraphics[width=0.48\textwidth]{./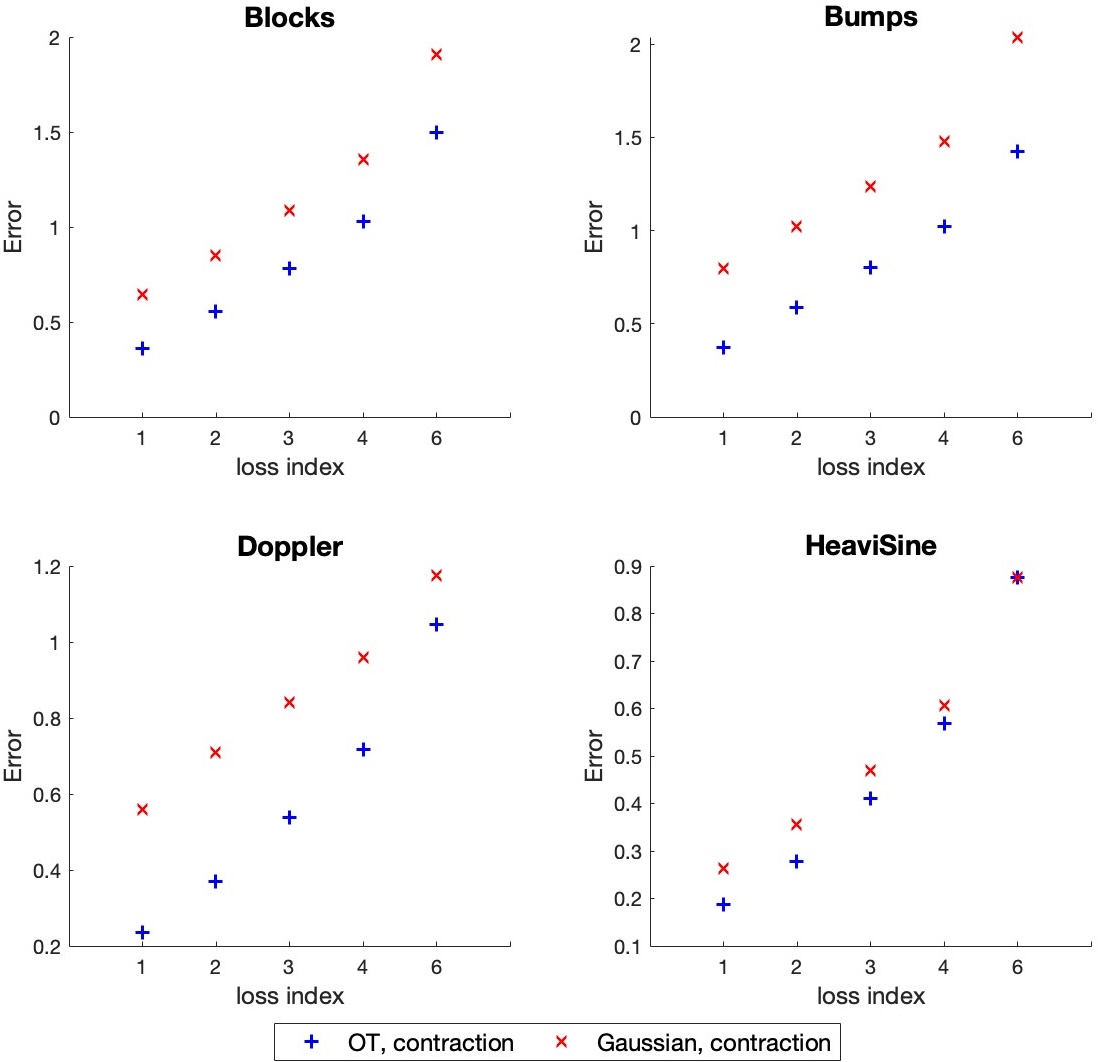}

    \caption{Average errors in $L_{p'}$ for $p'=1,2,3,4,6$, for four model spatially inhomogeneous truths. Signal-to-noise ratio approximately 7 for all truths, errors averaged over 100 realizations of the noise. Errors for posterior means on left, contraction-type errors on right, for Cauchy OT prior (blue plus sign markers) and Gaussian hierarchical prior (red cross markers). In the left plot, the black circles are Hybrid SureShrink estimation errors}
    \label{fig-DJ94errors}
\end{figure}

\begin{table}[ht]
    \centering
  \caption{White noise model with spatially inhomogeneous truths: $L_{\infty}$ average errors of posterior means (contraction-type errors in parentheses), under Cauchy OT and Gaussian hierarchical priors. Hybrid SureShrink estimation error also displayed.}
  \label{tab:spinh}
    \begin{tabular}{llll}
        \toprule
        & \textbf{\quad\hspace{-0.05cm} {Cauchy OT} \quad} & \textbf{Gaussian hierarchical} & \textbf{SureShrink} \\
    \toprule
   Blocks & \quad\; 4.20 (4.46) & \quad\quad\; 5.08 (5.41) & \quad\quad 4.52 \\
   Bumps  & \quad\; 3.65 (4.08) & \quad\quad\; 5.92 (6.21) &  \quad\quad 5.64 \\
   Doppler  & \quad\; 2.85 (3.02) & \quad\quad\; 2.41 (2.91) & \quad\quad 2.86 \\
   HeaviSine  & \quad\; 2.51 (2.69) & \quad\quad\; 2.36 (2.46) & \quad\quad 2.18 \\

        \bottomrule
    \end{tabular}
\end{table}

\subsection{Comparison of the performance of OT priors to wavelet thresholding algorithms, in sparse Besov spaces}

In this subsection, we construct a sequence of true functions belonging in $B^{3/2}_{1\infty}$, which are close to being `least favourable', in the sense that they are the most difficult to estimate among that Besov class. Our construction follows the strategy for constructing the functions used to establish lower bounds on the estimation rate over (sparse) Besov spaces in density estimation, as outlined e.g. in \cite{hkpt98}.

We construct four functions $f_0^{(i)}:[0,1]\to\R, i=1,\dots,4$, defined via their Symmlet-8 wavelet coefficients. Each function, has non-zero wavelet coefficients only at one level: the function $f_0^{(i)}$ has non-zero wavelet coefficients only at the level $j=2i$. Recalling the definition of the $B^{3/2}_{1\infty}$--norm from Section \ref{sec : besov}, this means that each function has $B^{3/2}_{1\infty}$--norm equal to 
\[ 2^{2i}\sum_{k=0}^{2^{2i}-1}|f_{2i,k}|, \quad i=1,\dots,4.\]
For $k=0,\dots, 2^{2i}-1$, we choose
$f_{2i,k}=20\cdot 2^{-2i}w_{2i,k}$, where $|w_{2i,k}|$ sum to 1.
In this way, we ensure that each 
$||f_0^{(i)}||_{B^{3/2}_{1\infty}}=20$, for $i=1,\dots,4$.
The $w_{2i,k}$, are drawn via a `stick-breaking' construction based on uniform random variables, see \cite[Section 3.3.2]{gvbook}; to avoid all significant signal being on the left of the unit interval, we randomly permute the sticks. The resulting functions can be seen in Figure~\ref{fig-badtruths}. Finally, we add standard normal noise scaled by $1/n$ for $n=10^{i+1}$ to the coefficients of each truth, to define the noisy observations. For each truth, we generate 100 realizations of the observation.

We consider a Cauchy OT prior and the hybrid SureShrink estimator. The implementation of the inference is performed in the same way as in the previous subsection. In Figure \ref{fig-badtruthserrors}, we show the logarithms of $L_{p'}$ errors of the posterior means and the thresholding estimators as a function of $\log{n}$. Here, for each $p'=1,2,3,4,6,\infty$, we have four data points, one for each $f_0^{(i)}$, with corresponding noise precision level $n=10^{i+1}$. The errors for each $i$ are averaged over the 100 realizations of the data, before we apply the logarithm. 

According to the minimax rates discussed in Section \ref{sec : besov}, over $B^{3/2}_{1\infty}$, for $p'\le 4$ the minimax rate is the usual rate (here $n^{-3/8}$), while for $p'>4$ we are in the sparse zone and the minimax rate is slower (here $n^{-1/4-1/(2p')}$). Indeed, in Figure \ref{fig-badtruthserrors} we observe that for stronger norms, the errors for both the Cauchy OT prior and SureShrink, appear to decay at a slower rate with $n$. Furthermore, again the performance of the Cauchy OT prior appears to be on par with (hybrid) SureShrink.\\

\begin{figure}
    \centering 
     \includegraphics[width=0.7\textwidth]{./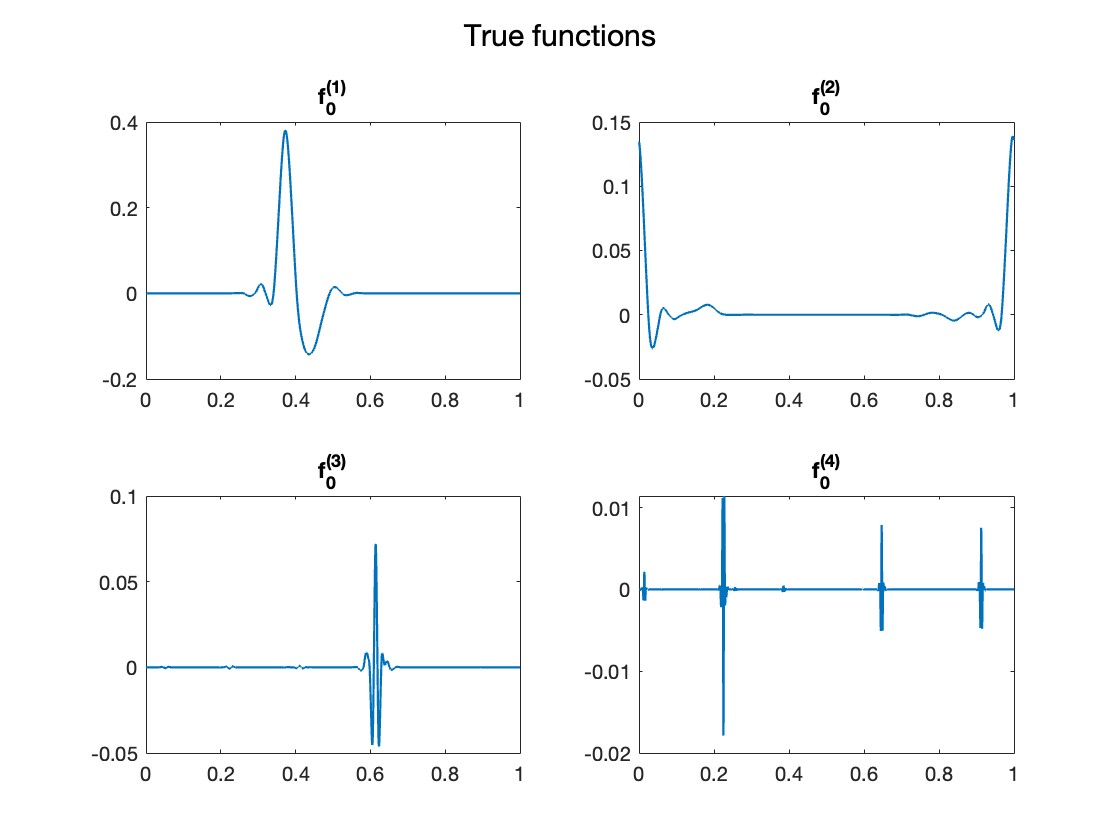}
    \caption{`Least favourable' truths $f_0^{(i)}, \,i=1,\dots,4$ of unit $B^{3/2}_{1\infty}$-norm, constructed to have non-zero wavelet coefficients only at level $j=2i$, and with nonzero coefficients constructed via randomly permuted strick-breaking, scaled by $2^{-2i}$.}
    \label{fig-badtruths}
\end{figure}

\begin{figure}
    \centering
     \includegraphics[width=0.46\textwidth]{./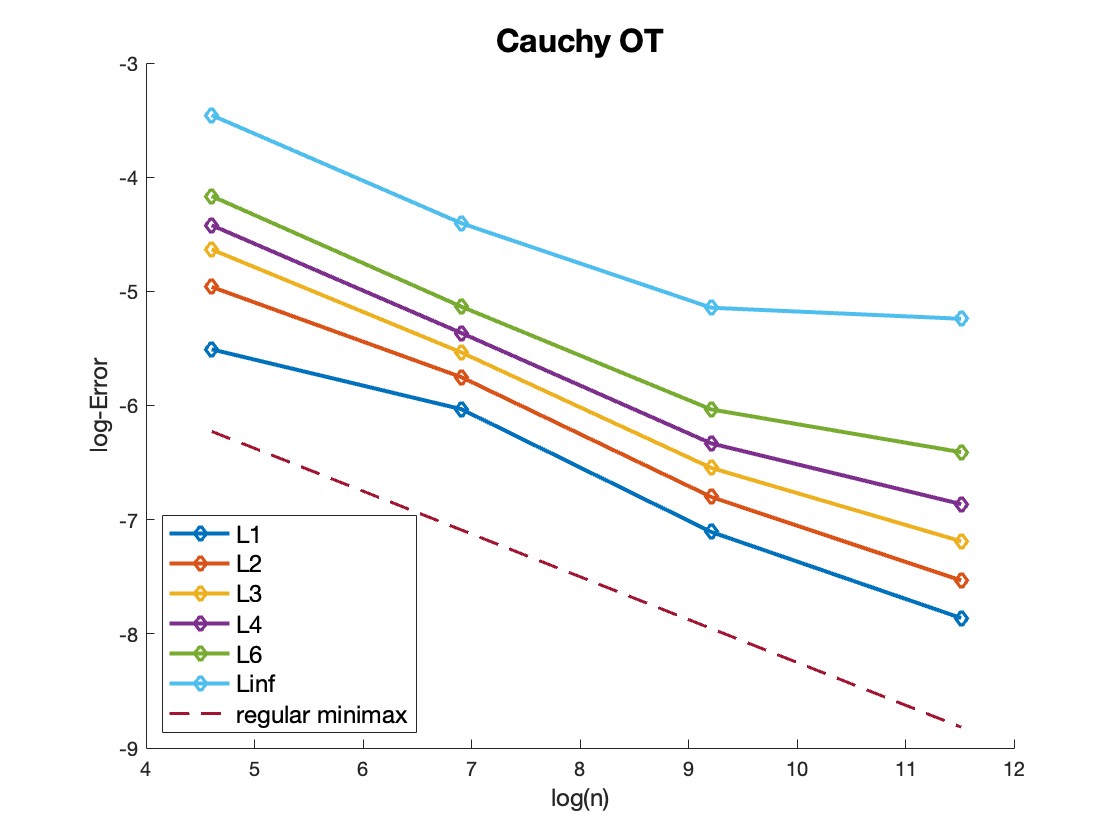}
    \includegraphics[width=0.46\textwidth]{./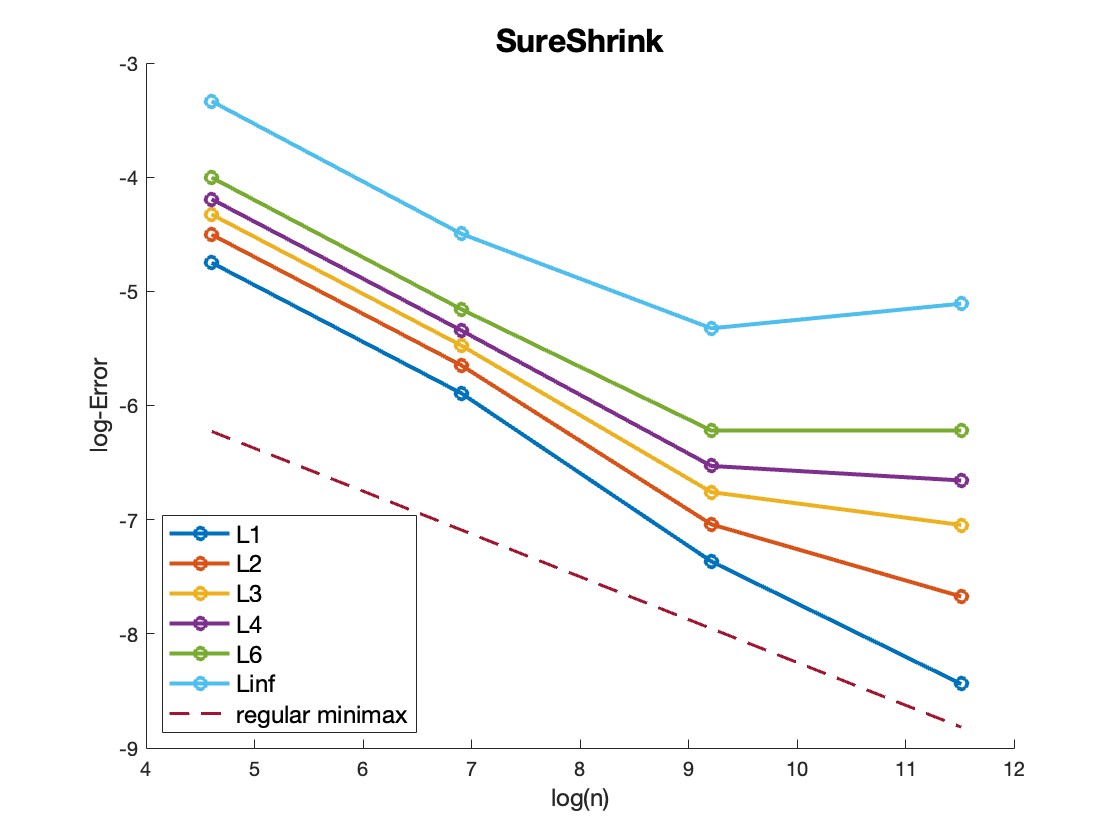}
    \caption{Log average errors for four `least favourable' truths from Figure \ref{fig-badtruths}, with corresponding $n=10^{i+1}$. $L_{p'}$-errors for the posterior mean for Cauchy OT priors (left) and for the hybrid SureShrink estimator (right), for $p'=1,2,3,4,6,\infty$.}
    \label{fig-badtruthserrors}
\end{figure}

\section{Discussion} \label{sec:disc}

In this work we introduce a number of nonparametric priors that are flexible enough to reach (near--) minimax optimality over Besov spaces and over a whole spectrum of loss functions. The prior we recommend is the OT (Oversmoothed heavy-Tailed) prior with scale parameters $\sigma_k=\exp\{-(\log{k})^{1+\nu}\}$, where we recommend the universal choice $\nu=1/2$ yielding uniformly excellent results in practice, although close choices such as $\nu=1$ remain quite close in terms of performance as well; and in terms of type of heavy-tailed prior, taking a Cauchy distribution or a Student($p$) with e.g. $p=3$ degrees of freedom   (where the difference of performance between the two is, again, quite mild). The new lower bounds results of the present paper (Theorem \ref{thm : lbprob}) also confirm that the special type of decrease as above for $\sigma_k$ (slightly faster than any polynomial in $k$) is really needed in order to achieve full adaptation, since we prove that the HT$(\alpha)$ prior only achieves one-sided adaptation in the oversmoothing case $\alpha\ge \beta$, as also seen in our simulation study.

Additional insight can be drawn from the empirical study of the posterior estimators of prototypical spatially inhomogeneous truths under these priors. Indeed, we have seen, for example in the case of the Doppler function (see Figure \ref{fig-DJ94errors} and Table 1 below), that the errors associated with the Cauchy prior become close (if not larger) to those of the Gaussian prior (corresponding to a linear estimator) as $p'$ approaches $\infty$. 
This behavior does not deviate from the theoretical findings in terms of rates, since the linear and global minimax rates are getting closer to each other for larger values of $p'$ (and it is conceivable that the posterior for the Gaussian hierarchical prior with two parameters achieves the linear minimax rate). Interestingly, however, as it was already noted in \cite{AC}, the Cauchy prior still provides better signal denoising when plotting the reconstruction. This suggests that, when comparing two estimates, unless one clearly outperforms the other, examining performance under a single $L_{p'}$ loss is not a sufficient criterion. This is especially the case when $p'$ grows large or equals $\infty$, as linear and global minimax rates then tend to coincide. This suggests that a decision based on the full range of $L_{p'}$ errors can be preferable, even though the question of which loss should ultimately be favored remains open and depends on the specific inferential goals.
 
The present paper also opens the door to the investigation of (very--) heavy tailed priors for nonparametric inference, including the classical Cauchy prior, or the more elaborate horseshoe distribution. The simulations in Section \ref{sec:sims} reveal a number of differences between the OT with Cauchy or Student prior and the horseshoe prior. From the theoretical perspective, the horseshoe prior fits the main Theorem in a similar way as Cauchy. Empirically, if we take a series prior with scalings $\sigma_k$ as in the OT prior with horseshoe distributions,  we obtain roughly similar behaviour as for Cauchy. However, algorithmically, although in the present setting the difference in computing time is not very significant, in general the fact that the horseshoe models a near-spike at zero -- in order to mimic sparse vectors -- is in principle not necessary in the present purely nonparametric context. For more complex models (such as density estimation or classification, not considered here, but studied in terms of certain losses in \cite{AC}), one may think that modelling a near-spike will be more costly computationally. 

Another interesting point of comparison is the {\em truncated} horseshoe prior. Such a prior is generally used in the context of sparsity -- for instance for sparse sequences or high-dimensional linear regression \cite{hs1} --; similarly, a Cauchy prior with common scale parameter was proposed for high dimensional linear regression in \cite{dt12}. While such a prior can be used in the present nonparametrics context as well, it behaves somewhat similarly to a hard-thresholding estimator and so seems less flexible than e.g. the SureShrink algorithm that we compare to in Section \ref{sec:sims}, and indeed its performance is not as favourable for nonparametric function estimation, as one would expect (we did not try to optimise over the parameter $\tau$, which in principle could be done e.g. via empirical Bayes, but the corresponding procedure seems overall more suited to purely sparse classes rather than nonparametric ones). We see that part of the problem with this type of prior is the choice of $\tau$; we expect something similar to occur with spike-and-slab priors as considered in \cite{hrs15}. Indeed, although spike-and-slab priors were shown to lead to powerful posterior contraction (see e.g. \cite{hrs15}) which we conjecture would lead to minimax-optimal sparse Besov--rates, including the optimal logarithmic factors for separable rules, their practical performance typically depends on a careful calibration of the spike weight, which plays a role analogous to the global scale parameter $\tau$ in horseshoe-type priors. By contrast, while the theoretical contraction rates derived for OT priors with scaling \eqref{def : OTprior} exhibit a mild logarithmic overhead (e.g. in Theorem \ref{thm : OTconc}), reflecting intrinsic limitations of such choice of scaling, this does not appear to translate into a loss of efficiency in practice. In numerical experiments, OT priors perform at least on par with known optimal estimators, including soft--thresholding. Also, as discussed in \cite{AC}, deploying spike-and-slab priors in more complex models is expected to lead to computational difficulties in terms of exploration of sets of submodels (i.e. the ones corresponding to where zeros are distributed from the prior/posterior); and, on the other hand, OT priors retain a form of computational tractability beyond regression models, as their built-in deterministic shrinkage (through non-random scales $\sigma_k$) enables to use MCMC without having to sample posteriors on submodels, as would be the case for model-selection type priors such as sieve priors with random cut-off or spike-and-slab priors. 

Let us further comment on the appearance of the additional logarithmic factors in the theoretical upper bounds we derive: these factors are intrinsically linked to the separable nature of the prior and to the specific OT choice of scaling in \eqref{def : OTprior}. One possible way to move beyond the class of separable rules and obtain theoretical results without extra logarithmic factors (at least in $L_2$ loss, as in the setting of Theorem~\ref{thm : OTconc}) would be to follow the interesting idea of \cite{GaoZhouBlock}, who consider priors defined on blocks of grouped coefficients, in the spirit of block-thresholding methods. While this is an interesting direction, we believe that a careful investigation of such priors and their theoretical properties would deserve a separate contribution.

Finally, in the present paper we have considered for simplicity the Gaussian white noise model. We believe that at least part of our results could be extended to nonparametric regression -- with fixed regular design say -- by using again a direct analysis of the posterior using an empirical projection of the observations onto an orthonormal basis; extending the present results, at least those in Section \ref{sec : besov}, to other models using `prior-mass' type arguments (as was done e.g. in \cite{AC} for density estimation and classification, and tempered posteriors) remains an open problem: we refer to \cite{AC}, Appendix A, for more discussion on this. 

\section{Proofs of the results of Section \ref{sec : Sobolev}}
\subsection{Proof of Theorem \ref{thm : OTconc}}\label{proof : OTconc}

We focus here on the case of OT scalings \eqref{def : OTprior} with scale $\nu >0$, the statement involving the truncated horseshoe as in \eqref{def : truncHSprior} is proven in the supplementary material \cite{sm}. Let \[ K_n := \left( \frac{n}{\log^{(1+\kappa)(1+\nu)}n}\right)^{\frac1{2 \beta +1}} \quad \text{and} \quad v_n := \left( \frac{n}{ \log^{(1+\kappa)(1+\nu)}n}\right)^{-\frac{2\beta}{2 \beta +1}} =\frac{K_n}{n}\log^{(1+\kappa)(1+\nu)}n .\] For $f \in L_2$, let $f^{[K_n]}$ denote its orthogonal projection onto the linear span of the first $K_n$ basis vectors (taking the closest integer to $K_n$) and set $f^{[K_n^c]} := f - f^{[K_n]}$. A union bound leads to
\begin{align*}
\post{ \{ f \, : \, ||f - f_0||_2^2 > M_n v_n \}} &\leq \post{ \{ f \, : \, ||f^{[K_n^c]} - f_0^{[K_n^c]}||_2^2 > M_n v_n/2 \}}\\
&+ \post{ \{ f \, : \, ||f^{[K_n]} - f_0^{[K_n]}||_2^2 > M_n v_n/2 \}}.
\end{align*}

Let us first deal with the coefficients $k \leq K_n$. Using Markov's inequality and next splitting the sum with $(a+b)^2 \leq 2a^2 +2b^2$, one can bound the second term in the last display by $2/(M_nv_n)$ times
\[ \int ||f^{[K_n]} - f_0^{[K_n]}||_2^2 \, d \Pi(f \, | \, X) \leq 2  \sum_{k \leq K_n} \int (f_k-X_k)^2 \, d \Pi(f \, | \, X) + 2 \sum_{k \leq K_n} (X_k - f_{0,k})^2 .\]
Under $E_{f_0}$, using the definition of the model \eqref{def : nseq}, the second sum on the right hand side is smaller than $K_n / n = o(v_n)$. It now suffices to show that
\[\sum_{k \leq K_n} E_{f_0} \int (f_k-X_k)^2 \, d \Pi(f \, | \, X)  = o(M_n v_n). \]
Combining Lemma \ref{lem : lap} with $p=2$ and Lemma \ref{lem : lapHTnew} on coordinate $k$ we get, noting that $|f_{0,k}| \leq F$ for all $k$ follows from \eqref{def : sobolev ball}, for any $t \in \R$
\[n E_{f_0} \int (f_k-X_k)^2 \, d \Pi(f \, | \, X) \lesssim t^{-2}  \left[ 1 + \log^{2}(\sigma_k \sqrt{n}) +  t^{4} + \log^{2(1 + \kappa)}\left( 1 + \frac{F + 1/\sqrt{n}}{\sigma_k} \right)\right].\]
Since $\sigma_k = e^{-\log^{1 + \nu}k}$ and $k \leq K_n$, we have $\log\sigma_k^{-1} \lesssim \log^{1+\nu}n $. By taking $t^4 \asymp \log^{2(1+\kappa)(1+\nu)}n$, the bound of the last display is of order at most $\log^{(1+\nu)(1+\kappa)}n $. Therefore, the sum in the last but one display is bounded up to constant, by 
\[\frac{K_n}{n} \log^{(1+\kappa)(1+\nu)}n = v_n = o(M_n v_n).\]
For the terms $k > K_n$, since $f_0 \in \mathcal{S}^{\beta}(F)$, when $n$ is large enough, $ \sum_{k > K_n} |f_{0,k}|^2 \leq v_n/8$, thus
\[ E_{f_0} \post{  \{ f \, : \, \sum_{k > K_n} |f_k - f_{0,k}|^2 > M_n v_n/2 \} } \leq  E_{f_0} \post{  \{ f \, : \, \sum_{k > K_n} |f_k|^2 > M_n v_n/8 \} }. \]
Let us introduce the sequence $z_k := k^{-1} \log^{-2}k$. Using the summability of $(z_k)$ and the divergence of $(M_n)$, as $n$ gets large enough, we have $\sum_{k > K_n} z_k \leq M_n/8$. Thus, as $n$ gets large enough
\begin{align*}
    E_{f_0} \post{  \{ f \, : \, \sum_{k > K_n} |f_k|^2 > M_n v_n/8 \} } \leq E_{f_0} \post{\{ f \, : \, \underset{k > K_n}{\max} z_k^{-1} |f_k|^2  \geq  v_n\}  }.
\end{align*} 
For any event $A_n$, the upper-bound of the last display is further bounded by
\begin{equation}\label{eq : thm1 bound before event tech}
    \sum_{k > K_n} E_{f_0} \left( \post{\{ f \, : \,  f_k^2  \geq  z_k v_n\}  } \mathbf{1}_{A_n} \right) + P_{f_0}(A_n^c) .
\end{equation}
Let us define the event
\begin{equation}\label{def : eventAn}
    A_n := \bigcap_{ K_n < k \leq n, k \in N}A_{k,0} \cap \bigcap_{ \ell \geq 1} \bigcap_{\ell n < k \leq (\ell+1)n , k \in N} A_{k,\ell},
\end{equation}
where we have set \[A_{k,\ell} := \left\{ |X_k| \leq \underbrace{\sqrt{\frac{4 \log (n(\ell + 1)^2)}{n}}}_{y_{k,\ell}}\right\}; \qquad N := \{k \, : \, |f_{0,k}| \leq 1/\sqrt{n} \}.\]
For any $\ell,k$ such that $\ell n < k \leq (\ell+1)n$ we set $x_k = y_{k,\ell} $. With such choice of $(x_k)$ the conditions of Lemma \ref{lem : tech lower bound} are satisfied on the event $A_n$.
Writing $\phi$ for the density of the standard Gaussian distribution, using $\phi \lesssim 1$ for the numerator and for the denominator Lemma \ref{lem : tech lower bound} with $m =0$, on coordinate $k$, we have
\begin{align*}
    E_{f_0} \left( \post{\{ f \, : \,  f_k^2  \geq  z_k v_n\}  } \mathbf{1}_{A_n} \right)  &\leq  E_{f_0} \left(\frac{\int_{|\theta| >  \sqrt{ z_k v_n}} \phi(\sqrt{n}(X_k - \theta)) h (\theta / \sigma_k) \, d \theta}{\int \phi(\sqrt{n}(X_k - \theta)) h (\theta / \sigma_k) \, d \theta} \mathbf{1}_{A_n} \right) \\
    &\lesssim E_{f_0} \left( \frac{1}{\phi(\sqrt{n}x_k)} 2 \overline{H}\left( \sigma_k^{-1}\sqrt{v_n z_k}\right) \mathbf{1}_{A_n} \right).
\end{align*}
Now for any $k > K_n$ and $n$ large enough, we have $\sigma_k^{-1}\sqrt{{v_n}{z_k}} \geq 1$, therefore using assumption (H3)
\[ \overline{H}\left( \sigma_k^{-1}\sqrt{{v_n}{z_k}}\right) \lesssim \frac{\sigma_k}{\sqrt{{v_n}{z_k}}}.\]
Noting that for $y_{k,\ell}$ as defined above  $\phi(\sqrt{n} y_{k,\ell}) \gtrsim (n(\ell + 1)^2)^{-2}$, the sum in \eqref{eq : thm1 bound before event tech} is bounded as
\begin{align*}
   & \sum_{k > K_n} E_{f_0} \left( \post{\{ f \, : \,  f_k^2  \geq  z_k v_n\}  } \mathbf{1}_{A_n} \right)\\
   & \lesssim \sum_{K_n < k \leq n} n^2 \frac{\sigma_k}{\sqrt{{v_n}{z_k}}} + \frac{1}{\sqrt{ v_n}} \sum_{\ell \geq 1} \sum_{\ell n < k \leq (\ell+1)n} (n(\ell+1)^2)^2 \sigma_k z_k^{-1/2} \\
   & \lesssim \frac{n^{7/2} \log n}{\sqrt{v_n}}  e^{-C \log^2n} + \frac{n^{7/2}}{\sqrt{v_n}} \sum_{\ell\geq 1} (\ell+1)^4 \sqrt{\ell+1} \log((\ell+1)n) e^{-\log^2(\ell n)}
\end{align*}
Since $v_n = (n/\log^{(1+\kappa)(1+\nu)}n)^{-2\beta/(2\beta+1)}$, the sums in the last display all go to $0$, when $n \to \infty$. Finally from Lemma \ref{lem : pAn} we have $P_{f_0}(A_n^c) \to 0$, which concludes the proof.

\subsection{Proof of Theorem \ref{thm : lbprob}}\label{proof : lbprob}
Here we only show the proof of the in-probability lower bound, the proof for the in-expectation version can be found in Section C of the supplementary material \cite{sm}. Let $K_{\alpha} := n^{1/(2\alpha +1)}$ and $\varepsilon_n := K_{\alpha}^{-\alpha} \log^{\gamma} n$, where $\gamma > 0$ is to be chosen below. It is enough to show (see for instance \cite{ic08}, in particular Lemma 1 therein) that, as $n \to \infty,$ 
\[\frac{\Pi\left(||f-f_0||_2 \leq \varphi_n\right)}{e^{-2n \varepsilon_n^2}\Pi\left(||f-f_0||_2 \leq \varepsilon_n\right)} \to 0 .\]
Since $\alpha < \beta$, we have $f_0 \in S^{\beta}(F) \subset S^{\alpha}(F)$ and thanks to \cite{AC}, Theorem 6, as soon as $\gamma >0$ is taken large enough, the $HT (\alpha)$ prior satisfies the following prior mass condition, for $n$ large enough,
\begin{equation}\label{eq : prior mass}
    \Pi\left(||f-f_0||_2 \leq \varepsilon_n\right) \geq e^{-n \varepsilon_n^2}.
\end{equation}
For the numerator, looking at the orthogonal projection on coordinates $k > K_{\alpha}$, we have
\[ \Pi\left(||f-f_0||_2 \leq \varphi_n\right) \leq \Pi\left(||f^{[K_{\alpha}^c]}-f_0^{[K_{\alpha}^c]}||_2 \leq \varphi_n\right) .\]
Since $f_0 \in S^{\beta}(F)$ and $\alpha < \beta$, for $n$ large enough, we have $||f_0^{[K_{\alpha}^c]}||_2 \leq F K_{\alpha}^{- \beta} \leq \varphi_n$ by definition of $\varphi_n$. Thus the triangle inequality leads to 
\[ \Pi\left(||f^{[K_{\alpha}^c]}-f_0^{[K_{\alpha}^c]}||_2 \leq \varphi_n\right)  \leq \Pi \left( ||f^{[K_{\alpha}^c]} ||_2 \leq 2\varphi_n \right).\]
To bound the last term, apply Lemma \ref{lem : aurzada} with $\mu := 1/2 + \alpha$, $K_n = K_{\alpha}$ and $\lambda := n (\log n)^{(2 \gamma +1 )(2 \alpha +1)}$ such that $\mu > 1 $ and $K_n \lambda^{-1/(2\mu)} \leq 1$ when $n$ is large enough. Thanks to hypothesis (H3), the condition $E(|\zeta|^{1/\mu}) < \infty$ is satisfied for $\mu >1$ (hence the prior regularity assumption $\alpha>1/2$, see also Remark \ref{rem:lbass}), therefore we obtain
\[ \log \Pi \left( ||f^{[K_{\alpha}^c]} ||_2 \leq 2\varphi_n \right) \leq4 \lambda \varphi_n^2 - C \lambda^{1/(2\alpha +1)} .\]
If $\delta >0 $ is large enough, we have $4\lambda \varphi_n^2 \leq (C/2) \lambda^{1/(2\alpha +1)}$ and the previous bound is smaller than 
\[ - (C/2) n^{1/(2 \alpha +1)} (\log n)^{2 \gamma +1} \lesssim -  n \varepsilon_n^2 \log n. \]
Combining this bound and the upper bound \eqref{eq : prior mass} leads to 
\[ \frac{\Pi\left(||f-f_0||_2 \leq \varphi_n\right)}{e^{-2n \varepsilon_n^2}\Pi\left(||f-f_0||_2 \leq \varepsilon_n\right)} \leq e^{ - C' n \varepsilon_n^2 \log n} \to 0,\]
as $n \to \infty$, which concludes the proof for the in-probability bound.
\subsection{Technical lemmas}

\begin{lemma}\label{lem : lapHTnew}
   Let $\theta \sim \pi$ and $X|\theta \sim \mathcal{N}(\theta, 1/n)$. Suppose $\pi$ is the law of $\sigma \cdot \zeta$ where $\sigma > 0$ and $\zeta$ has an heavy-tailed density satisfying \ref{H1}--\ref{H2}. Then, for all $t \in \R$, $\theta_0 \in \R$, $\sigma > 0$,

    \begin{equation*}
         E_{\theta_0} E^{\pi}\left[e^{t\sqrt{n}(\theta-X)} \, | \, X \right] \lesssim \sigma \sqrt{n }e^{t^2/2} e^{c_1 \log^{1+ \kappa}( 1 + \frac{|\theta_0| + 1 / \sqrt{n}}{\sigma} )} 
    \end{equation*}
\end{lemma}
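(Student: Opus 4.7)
The plan is to reduce the calculation to a ratio of integrals by absorbing $t$ into a shift of the Gaussian kernel. Writing the prior density as $\pi(\theta) = \sigma^{-1} h(\theta/\sigma)$ and completing the square in $t\sqrt{n}(\theta - X) - \frac{n}{2}(\theta - X)^2$ yields
\begin{equation*}
E^{\pi}\!\left[e^{t\sqrt{n}(\theta-X)}\,\big|\,X\right]
= e^{t^2/2}\,\frac{\int e^{-\frac{n}{2}(\theta - Y)^2}\pi(\theta)\,d\theta}{\int e^{-\frac{n}{2}(\theta - X)^2}\pi(\theta)\,d\theta},
\qquad Y := X + t/\sqrt{n}.
\end{equation*}
I would bound the numerator crudely by $1$, using $e^{-n(\theta - Y)^2/2} \leq 1$ and the fact that $\pi$ is a probability density. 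This is the only reasonable bound available at this level of generality, since $h$ is allowed to diverge at $0$ (e.g. the horseshoe), so the alternative bound $h \leq h(0)$ is not at our disposal. This is in fact the main subtlety of the proof, and is precisely why the factor $\sigma\sqrt{n}$ in the statement must emerge entirely from the denominator lower bound.

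For the denominator, I would restrict integration to $\{|\theta - X| \leq 1/\sqrt{n}\}$, where the Gaussian factor is at least $e^{-1/2}$. On this interval, assumption \ref{H1} (symmetry and monotonicity of $h$ on $[0,\infty)$) gives $h(\theta/\sigma) \geq h(M)$ with $M := (|X|+1/\sqrt{n})/\sigma$, and assumption \ref{H2} then gives $h(M) \geq \exp\{-c_1(1+\log^{1+\kappa}(1+M))\}$. Combining these with the length $2/(\sigma\sqrt{n})$ of the range of $\theta/\sigma$, the denominator is at least a constant multiple of $(\sigma\sqrt{n})^{-1}\exp\{-c_1\log^{1+\kappa}(1+M)\}$, whence
\begin{equation*}
E^{\pi}\!\left[e^{t\sqrt{n}(\theta - X)}\,|\,X\right] \lesssim e^{t^2/2}\, \sigma\sqrt{n} \,\exp\!\Big\{ c_1 \log^{1+\kappa}\!\Big( 1+\tfrac{|X|+1/\sqrt{n}}{\sigma}\Big)\Big\}.
\end{equation*}

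The final step consists in taking $E_{\theta_0}$ and replacing $|X|$ by $|\theta_0|$. Setting $Z := \sqrt{n}(X - \theta_0) \sim \mathcal{N}(0,1)$, the triangle inequality $|X| \leq |\theta_0| + |Z|/\sqrt{n}$ together with the trivial lower bound $\sigma + |\theta_0| + 1/\sqrt{n} \geq 1/\sqrt{n}$ produces the multiplicative factorization
\begin{equation*}
1 + \tfrac{|X|+1/\sqrt{n}}{\sigma} \leq \Big(1 + \tfrac{|\theta_0|+1/\sqrt{n}}{\sigma}\Big)(1+|Z|).
\end{equation*}
Taking logarithms and applying the elementary inequality $(a+b)^{1+\kappa} \leq 2^{\kappa}(a^{1+\kappa} + b^{1+\kappa})$, the exponent splits additively into a deterministic term in $|\theta_0|$ and a random term in $|Z|$. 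The resulting Gaussian expectation $E[\exp\{c_1 2^{\kappa} \log^{1+\kappa}(1+|Z|)\}]$ is finite, because $\log^{1+\kappa}(1+|z|) = o(z^2)$ at infinity, and can be absorbed into the $\lesssim$ constant; the numerical factor $2^{\kappa}$ in the remaining exponent is likewise absorbed into $c_1$ (understood here up to a universal multiple depending only on $c_1,\kappa$). This yields the claimed bound.
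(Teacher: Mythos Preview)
Your argument is correct and follows the same overall strategy as the paper: write the conditional Laplace transform as a ratio, complete the square to extract $e^{t^2/2}$, bound the numerator using that $h$ integrates to one, and lower bound the denominator by restricting to an interval of length $2/\sqrt n$ combined with (H1)--(H2).

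The one genuine methodological difference lies in how the denominator and the outer $E_{\theta_0}$ are handled. The paper restricts the denominator to $|\theta-\theta_0|\le 1/\sqrt n$ (equivalently $v\in[-1,1]$ after the change of variables $v=\sqrt n(\theta-\theta_0)$), so that the lower bound on $h$ is deterministic in $\theta_0$ and all the randomness is concentrated in the factor $\int_{-1}^{1}e^{-(v-\xi)^2/2}\,dv$ with $\xi\sim\mathcal N(0,1)$; the paper then invokes a known bound (from \cite{CN}) on $E_\xi\big[(\int_{-1}^1 e^{-(v-\xi)^2/2}dv)^{-1}\big]$. You instead restrict to $|\theta-X|\le 1/\sqrt n$, which makes the Gaussian factor deterministic but leaves the $h$ bound random through $|X|$; you then separate the $\theta_0$ and $Z$ contributions via the multiplicative factorization and take the Gaussian expectation of $\exp\{c\,\log^{1+\kappa}(1+|Z|)\}$. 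Your route is more self-contained (no external lemma needed), at the price of replacing $c_1$ in the exponent by $c_1 2^{\kappa}$, a point you correctly flag. This is harmless for every use of the lemma in the paper, where only the order $\log^{1+\kappa}$ of the exponent matters, but it does mean you prove a marginally weaker inequality than the one literally stated; the paper's centering around $\theta_0$ recovers the constant $c_1$ exactly.
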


\begin{proof}
    For any $t \in \R$, denoting $h$ the density of $\zeta$, we have
\begin{align*}
    E_{\theta_0} E^{\pi} \left[e^{t\sqrt{n}(\theta-X)} \, | \, X \right] &= E_{\theta_0} \frac{\int \exp(t \sqrt{n}(\theta - X)) \phi(\sqrt{n}(X - \theta)) h(\theta / \sigma) \, d\theta}{\int \phi(\sqrt{n}(X - \theta)) h(\theta / \sigma) \, d\theta} \\
    &= E_{\xi \sim \mathcal{N}(0,1)} \frac{\int e^{t(v-\xi) - \frac{(v-\xi)^2}{2}} h(\frac{\theta_0 + v / \sqrt{n}}{\sigma}) \, dv}{\int e^{ - \frac{(v-\xi)^2}{2}} h(\frac{\theta_0 + v / \sqrt{n}}{\sigma}) \, dv}.
\end{align*}
The integral on the numerator is bounded as follows, using that $h$ is a density function
\begin{align*}
    \int e^{t(v-\xi) - \frac{(v-\xi)^2}{2}} h(\frac{\theta_0 + v / \sqrt{n}}{\sigma}) \, dv &= e^{t^2/2} \int e^{- \frac{(t-(v-\xi))^2}{2}} h(\frac{\theta_0 + v / \sqrt{n}}{\sigma}) \, dv 
     \leq \sigma \sqrt{n} e^{t^2/2}
\end{align*}
For the denominator using that $h$ is symmetric and decreasing on $(0,\infty)$, along with condition \ref{H2}, one gets

\begin{align*}
    \int e^{ - \frac{(v-\xi)^2}{2}} h(\frac{\theta_0 + v / \sqrt{n}}{\sigma}) \, dv &\gtrsim  \int_{-1}^1 e^{-\frac{(v-\xi)^2}{2}} e^{-c_1 \log^{1+ \kappa}( 1 + \frac{\theta_0 + v / \sqrt{n}}{\sigma} )} \, dv \\
    & \gtrsim e^{-c_1 \log^{1+ \kappa}( 1 + \frac{|\theta_0| + 1 / \sqrt{n}}{\sigma} )} \int_{-1}^1 e^{-\frac{(v-\xi)^2}{2}} \, dv.
\end{align*}
Using \cite{CN} pages 2015-2016, there exists a universal constant $ K >0$, such that 
\[ E_{\xi \sim \mathcal{N}(0,1)} \left[ \left( \int_{-1}^1 e^{-\frac{(v-\xi)^2}{2}} \, dv. \right)^{-1} \right] \leq K.\]
Combining the previous inequalities leads to the desired result.
\end{proof}

\begin{lemma}\label{lem : lap}
    Let $Y$ be a real valued random variable. Then for $t > 0$, $p \geq 1$ and $\mathcal{L}(t) := E(\exp(t|Y|)),$
    \begin{equation*}
        E(|Y|^p) \leq \frac{2^{p-1}}{t^p} \left(p^p + \log^p \mathcal{L}(t)\right)
    \end{equation*}
\end{lemma}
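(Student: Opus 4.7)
\medskip
\noindent\textbf{Proof proposal for Lemma~\ref{lem : lap}.}

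The plan is to linearise the moment generating function using a change of measure, and then exploit a pointwise inequality between $\log z$ and $z^{1/p}$ to transfer a bound on a first moment into a bound on the $p$-th moment.

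First I would set $L := \log \mathcal{L}(t) \geq 0$ (note that $\mathcal{L}(t)\ge 1$ by Jensen), and introduce the nonnegative random variable
\[
Z := \frac{e^{t|Y|}}{\mathcal{L}(t)}, \qquad \text{which satisfies } E(Z)=1.
\]
Taking logarithms gives the identity $t|Y| = L + \log Z$. Since both $t|Y|$ and $L$ are nonnegative, this identity readily implies the pointwise bound $t|Y| \leq L + (\log Z)_+$. Next, using the elementary inequality $(a+b)^p \leq 2^{p-1}(a^p + b^p)$ valid for all $a,b\ge 0$ and $p \geq 1$, I get
\[
(t|Y|)^p \leq 2^{p-1}\bigl(L^p + (\log Z)_+^p\bigr).
\]

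The core step is then to bound $E[(\log Z)_+^p]$ using only the fact that $E(Z)=1$. For this I would invoke the pointwise inequality $\log y \leq y/e$ (valid for all $y>0$, with equality at $y=e$). Applying this with $y = z^{1/p}$ and raising to the $p$-th power yields
\[
(\log z)^p \leq \frac{p^p}{e^p}\,z \leq p^p \, z, \qquad \text{for all } z \geq 1,
\]
and the bound trivially holds for $z\in(0,1)$ since then $(\log z)_+ = 0$. Therefore
\[
E\bigl[(\log Z)_+^p\bigr] \leq p^p\, E(Z) = p^p.
\]

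Combining the two displays and dividing by $t^p$ gives the announced inequality
\[
E(|Y|^p) \leq \frac{2^{p-1}}{t^p}\bigl(p^p + L^p\bigr).
\]
There is no real obstacle here; the only mildly non-obvious step is the choice of the pointwise bound $(\log z)^p \leq p^p z$, which is what allows us to convert the unit first-moment constraint $E(Z)=1$ into a control on the $p$-th moment of $(\log Z)_+$.
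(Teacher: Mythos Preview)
Your proof is correct and reaches the stated bound, but by a genuinely different route than the paper. The paper writes $E(|Y|^p) = t^{-p} E[\log^p(e^{t|Y|})]$, then shifts the argument by $e^{p-1}$ so as to land in the region $x>e^{p-1}$ where $x\mapsto\log^p x$ is concave, and applies Jensen's inequality to obtain
\[
E\bigl[\log^p(e^{p-1}+e^{t|Y|})\bigr]\le \log^p\!\bigl(e^{p-1}+\mathcal{L}(t)\bigr)\le (p+\log\mathcal{L}(t))^p,
\]
followed by the same convexity split $(a+b)^p\le 2^{p-1}(a^p+b^p)$. Your argument instead normalises by $\mathcal{L}(t)$ to produce a mean-one variable $Z$ and controls $E[(\log Z)_+^p]$ via the pointwise bound $(\log z)_+^p\le p^p z$ (i.e.\ $\log y\le y/e$ applied with $y=z^{1/p}$). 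Both proofs are short; yours is arguably more elementary since it avoids the concavity-plus-shift manoeuvre, and it incidentally gives the sharper constant $p^p/e^p$ before you discard the $e^{-p}$. The paper's route has the minor by-product of the intermediate bound $(p+\log\mathcal{L}(t))^p$ prior to the split.
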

\begin{proof}
    Write $E(|Y|^p) = t^{-p} E[\log^p(\exp(t|Y|))]$. Using concavity of the map $x \mapsto \log^p x$ for $x > e^{p-1},$ by Jensen's inequality, one may write
    \begin{align*}
        E[\log^p(\exp(t|Y|))] & \leq E[\log^p(e^{p-1} + \exp(t|Y|))] \leq \log^p(e^{p-1} + \mathcal{L}(t)) \\
        &\leq (p + \log \mathcal{L}(t))^p \leq 2^{p-1}(p^p + \log^p \mathcal{L}(t)),
    \end{align*}
    where the second to last inequality uses $\log(e^{p-1} + x) \leq p + \log x$ valid for $x \geq 1$.
\end{proof}

\begin{lemma}[Lower bounds for posterior integrals]\label{lem : tech lower bound}
    Let $\phi$ be the density function of the standard Gaussian distribution. Let $\sigma > 0$,  $m \geq 0$ and $h$ be a density function satisfying \ref{H1}. Let $X$ be a random variable and $x$ a deterministic non-negative real number such that $|X| \leq x$ and $\sigma \lesssim x$, we have
    \[ \int |\theta|^m \phi(\sqrt{n}(X - \theta)) h(\theta/\sigma) \, d\theta \gtrsim \sigma^{m+1} \phi(\sqrt{n} x ).\]
\end{lemma}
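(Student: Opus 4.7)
The approach will be to lower-bound the integral by restricting it to a carefully chosen sub-interval on which all three factors of the integrand admit simultaneous explicit lower bounds. Before choosing the interval, I would first observe that, because $h$ is symmetric by \ref{H1}, the integrand is invariant under the joint change of variables $(X,\theta) \mapsto (-X,-\theta)$, so without loss of generality one may assume $X \ge 0$.

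The natural interval to take is $I_\alpha := [\alpha\sigma, 2\alpha\sigma]$ for a small constant $\alpha\in(0,1]$ to be calibrated below. On $I_\alpha$ the three factors are bounded as follows: the lower bound $|\theta|^m = \theta^m \ge (\alpha\sigma)^m$ is immediate, and $\theta/\sigma \in [\alpha, 2\alpha]$ together with positivity and monotonicity of $h$ on $[0,\infty)$ (condition \ref{H1}) yields $h(\theta/\sigma) \ge h(2\alpha) > 0$, a strictly positive constant depending only on $h$ and $\alpha$. The crucial step is then to argue that on $I_\alpha$,
\[ \phi(\sqrt{n}(X-\theta)) \ge \phi(\sqrt{n}x); \]
this is achieved by choosing $\alpha$ so that $2\alpha\sigma \le x$ (which is possible since $\sigma \lesssim x$), after which $X\ge 0$ and $\theta \in I_\alpha$ both lying in $[0,x]$ gives $|X-\theta| \le \max(X,\theta) \le x$, and the conclusion follows from the monotonicity of $\phi$ on $[0,\infty)$.

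Multiplying the three pointwise lower bounds by the length $\alpha\sigma$ of $I_\alpha$ yields
\[ \int |\theta|^m \phi(\sqrt{n}(X-\theta))h(\theta/\sigma)\,d\theta \ge (\alpha\sigma)^m \, h(2\alpha)\, \phi(\sqrt{n}x)\cdot \alpha\sigma, \]
which matches the target $\gtrsim \sigma^{m+1}\phi(\sqrt{n}x)$ after absorbing the constant $\alpha^{m+1}h(2\alpha)$ into the implicit constant. The argument is essentially elementary; the only delicate point worth flagging is the calibration of $\alpha$ against the implicit constant in $\sigma \lesssim x$, since a choice for which $I_\alpha$ exits $[0,x]$ would force one to lower bound $|X-\theta|$ only by a multiple $C x$ of $x$ with $C>1$, and the Gaussian factor would then be smaller than $\phi(\sqrt{n}x)$ by an exponentially large factor in $n$, ruining the bound.
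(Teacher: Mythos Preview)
Your proof is correct and follows essentially the same strategy as the paper's: reduce to $X\ge 0$ by symmetry, restrict the integral to a sub-interval of $[0,x]$ on which $|X-\theta|\le x$ gives $\phi(\sqrt{n}(X-\theta))\ge\phi(\sqrt{n}x)$, and then show that $\int |\theta|^m h(\theta/\sigma)\,d\theta \gtrsim \sigma^{m+1}$ over that sub-interval using \ref{H1}. The only cosmetic difference is the choice of sub-interval: the paper first restricts to $[X-x,X+x]$ (to handle the Gaussian factor) and then to $[0,x]$, substituting $u=\theta/\sigma$ to get $\sigma^{m+1}\int_0^{x/\sigma}|u|^m h(u)\,du$, whereas you work directly on $[\alpha\sigma,2\alpha\sigma]$ with $\alpha$ calibrated so that $2\alpha\sigma\le x$; both yield constants depending on $m$, $h$, and the implicit constant in $\sigma\lesssim x$.
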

\begin{proof} 
    By symmetry of both $h$ and $\phi$, it is enough to focus on the case $X \geq 0$. By restricting the domain of integration to the set $[X - x, X + x],$ the integral of interest is greater than
\begin{equation*}
     \phi(\sqrt{n}x) \int_{X - x}^{X + x} |\theta|^m h(\theta / \sigma) \, d\theta.
\end{equation*}
By assumption, $X \leq x$, the integral in the last display can be further bounded from below by $\int_0^{x} |\theta|^m h(\theta / \sigma) \, d\theta = \sigma^{m+1} \int_0^{x/\sigma} |u|^m h(u) \, du,$ recalling $X \geq 0$. Using $\sigma \lesssim x $ as well as the positivity and monotonicity of $h$, the later integral is further bounded below, for some $c > 0$, by $\int_0^c |u|^m h(u) \, du \gtrsim 1$. Putting everything together and using symmetry, one gets the desired result.
\end{proof}

\begin{lemma}\label{lem : pAn}
   Let $A_n$ be the event defined in \eqref{def : eventAn} and assume $f_0 \in \mathcal{S}^{\beta}(F)$, with $\beta,F>0$. Then as $n \to \infty$, we have
    \[ P_{f_0}(A_n^c) \to 0.\]
\end{lemma}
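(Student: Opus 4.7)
The plan is to apply a union bound to $P_{f_0}(A_n^c)$ and control each probability $P_{f_0}(A_{k,\ell}^c)$ using the standard Gaussian tail bound. The crucial point is that the intersection in \eqref{def : eventAn} is taken only over $k \in N$, i.e., indices with $|f_{0,k}|\le 1/\sqrt{n}$, so the observations $X_k = f_{0,k} + \xi_k/\sqrt{n}$ behave essentially like centered $\mathcal{N}(0,1/n)$ variables. The Sobolev assumption $f_0 \in \mathcal{S}^\beta(F)$ is not really needed here beyond ensuring the existence of $f_0$, since for $k\notin N$ the event $A_{k,\ell}$ is excluded from the intersection.

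First, I would write
\begin{equation*}
P_{f_0}(A_n^c) \leq \sum_{\substack{K_n < k \leq n \\ k \in N}} P_{f_0}(A_{k,0}^c) + \sum_{\ell \geq 1} \sum_{\substack{\ell n < k \leq (\ell+1)n \\ k \in N}} P_{f_0}(A_{k,\ell}^c).
\end{equation*}
For every index $k\in N$ and every $\ell\ge 0$, the triangle inequality together with $|f_{0,k}|\le 1/\sqrt n$ gives $|X_k|\le |\xi_k|/\sqrt n + 1/\sqrt n$, so
\begin{equation*}
P_{f_0}(A_{k,\ell}^c) = P_{f_0}(|X_k|>y_{k,\ell}) \leq P\bigl(|\xi_k| > \sqrt{n}\, y_{k,\ell} - 1\bigr).
\end{equation*}
By construction $\sqrt{n}\, y_{k,\ell} = 2\sqrt{\log(n(\ell+1)^2)}$, and the classical bound $P(|\xi|>t)\le 2e^{-t^2/2}$ yields
\begin{equation*}
P_{f_0}(A_{k,\ell}^c) \;\lesssim\; \exp\!\bigl\{-2\log(n(\ell+1)^2) + 2\sqrt{\log(n(\ell+1)^2)}\bigr\} \;=\; (n(\ell+1)^2)^{-2}\, e^{2\sqrt{\log(n(\ell+1)^2)}}.
\end{equation*}

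Summing is then straightforward. The first sum has at most $n$ terms, each bounded by $n^{-2} e^{2\sqrt{\log n}}$, contributing $O(n^{-1}e^{2\sqrt{\log n}})=o(1)$. The second involves at most $n$ indices $k$ per block, so
\begin{equation*}
\sum_{\ell\geq 1} \sum_{\substack{\ell n < k \leq (\ell+1)n \\ k \in N}} P_{f_0}(A_{k,\ell}^c) \;\lesssim\; n^{-1}\sum_{\ell\geq 1}(\ell+1)^{-4}\, e^{O(\sqrt{\log((\ell+1)n)})},
\end{equation*}
which again vanishes as $n\to\infty$ because $\sum_\ell (\ell+1)^{-4}<\infty$ and the sub-polynomial factor does not spoil the $n^{-1}$ decay. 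There is essentially no obstacle here; the only thing to check with a bit of care is that the constant $4$ built into the threshold $y_{k,\ell}$ is precisely what produces an $(n(\ell+1)^2)^{-2}$ decay, giving enough room to absorb both the $n$ coefficients inside each block and the additional summation over $\ell$.
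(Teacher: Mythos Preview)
Your proof is correct and follows essentially the same approach as the paper's: union bound over $k\in N$, triangle inequality to reduce to a centered Gaussian tail, then summation over the blocks. The only cosmetic difference is that the paper absorbs the cross term by observing $\sqrt{4\log(n(\ell+1)^2)}-1\ge\sqrt{3\log(n(\ell+1)^2)}$ for large $n$, yielding the cleaner bound $(n(\ell+1)^2)^{-3/2}$ and a final estimate $P_{f_0}(A_n^c)\lesssim n^{-1/2}$, whereas you carry the factor $e^{2\sqrt{\log(n(\ell+1)^2)}}$ explicitly; both routes give $o(1)$.
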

\begin{proof}
For any $k \in N$, by definition we have $|f_{0,k}| \leq 1/\sqrt{n}$, thus for any $\ell \geq 0$,
    \begin{align*}
        P_{f_0}(A_{k,\ell}^c) &= P_{f_0} \left\{ \left|f_{0,k} + \frac{\xi_k}{\sqrt{n}}\right| >\sqrt{\frac{4 \log (n(\ell+1)^2)}{n}} \right\} \leq P\left\{ \left| \mathcal{N}(0,1) \right| >\sqrt{4 \log (n(\ell+1)^2)}-1\right\}.
    \end{align*}
    As $n$ gets large enough, this Gaussian tail probability is further upper-bounded, for any $\ell \geq 0$, by 
    \[P\left\{ \left| \mathcal{N}(0,1) \right| >\sqrt{3 \log (n(\ell+1)^2)}\right\} \leq (n(\ell+1)^2)^{-3/2}.\]
    Combined with \eqref{def : eventAn} the definition of the event $A_n$ and a union bound, we obtain
    \[ P_{f_0}(A_n^c)\leq \sum_{\ell \geq 0}(\ell +1)n \times(n(\ell+1)^2)^{-3/2} \leq n^{-1/2} \sum_{\ell \geq 0} (\ell +1)^{-2} \lesssim 1/\sqrt{n},\]
     which ensures $P_{f_0}(A_n^c) \to 0$ as $n \to \infty$.
\end{proof}
\begin{lemma}\label{lem : aurzada}
Consider a random sum of the form 
\[S_n:=\sum_{k>K_n}\sigma_k^2\zeta_k^2,\]
where $\sigma_k=k^{-\mu
}$ for some $\mu>1/2$ and $\zeta_k$ are independent and identically distributed copies of the real random variable $\zeta$, satisfying $E[|\zeta|^{1/\mu}]<\infty$. Then for any $\varepsilon>0$, it holds 
\begin{equation}\label{eq:sb}
\log P\big(S_n\le \varepsilon^2\big)\leq \lambda \varepsilon^2-C\lambda^{\frac{1}{2\mu}},
\end{equation}
for any $\lambda>0$ such that $K_n\lambda^{-\frac{1}{2\mu}}\leq1$, where $C$ is a positive constant (depending on $\mu$ and the distribution of $\zeta$).
\end{lemma}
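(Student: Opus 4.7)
The plan is to use a Chernoff-type (exponential Markov) inequality. For any $\lambda>0$, since $\{S_n\le \varepsilon^2\}\subset\{e^{-\lambda S_n}\ge e^{-\lambda \varepsilon^2}\}$, Markov's inequality yields
\[
P(S_n\le \varepsilon^2)\le e^{\lambda \varepsilon^2}\,E[e^{-\lambda S_n}].
\]
By independence of the $\zeta_k$, taking logarithms reduces the claim \eqref{eq:sb} to showing
\[
-\sum_{k>K_n}\log E[e^{-\lambda k^{-2\mu}\zeta^2}]\ \ge\ C\lambda^{1/(2\mu)}
\]
under the assumption $K_n\lambda^{-1/(2\mu)}\le 1$.

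Next I would bound each term from below. Using $-\log(1-x)\ge x$ for $x\in[0,1)$ and $1-e^{-y}\ge(1-e^{-1})\mathbf{1}_{\{y\ge 1\}}$, one gets for any $s>0$,
\[
-\log E[e^{-s\zeta^2}]\ \ge\ E[1-e^{-s\zeta^2}]\ \ge\ (1-e^{-1})\,P(|\zeta|\ge s^{-1/2}).
\]
Applying this with $s=\lambda k^{-2\mu}$ and restricting attention to indices $k\in(K_n,2\lambda^{1/(2\mu)}]$, the threshold $s^{-1/2}=\lambda^{-1/2}k^{\mu}$ is then at most $2^{\mu}$. Since $\zeta$ is non-degenerate (in the paper's setting the density $h$ is positive on $\mathbb{R}$ by \ref{H1}), the quantity $c_\zeta:=P(|\zeta|\ge 2^{\mu})$ is a strictly positive constant depending only on the distribution of $\zeta$.

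The conclusion then follows by a counting argument. Under the hypothesis $K_n\le\lambda^{1/(2\mu)}$, the window $(K_n,2\lambda^{1/(2\mu)}]$ contains at least of order $\lambda^{1/(2\mu)}$ integers (one can handle the trivial case where $\lambda^{1/(2\mu)}$ is smaller than an absolute constant separately by absorption into $C$). Summing the per-term lower bound over this window gives
\[
-\sum_{k>K_n}\log E[e^{-\lambda k^{-2\mu}\zeta^2}]\ \ge\ (1-e^{-1})\,c_\zeta\,\lambda^{1/(2\mu)}\ =:\ C\,\lambda^{1/(2\mu)},
\]
which, combined with Chernoff, yields \eqref{eq:sb}. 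The moment hypothesis $E[|\zeta|^{1/\mu}]<\infty$ does not intervene in the exponential bound itself; it serves to guarantee a.s.~convergence of the random series $S_n$ (together with $\sum_k k^{-2\mu}<\infty$), so that the event $\{S_n\le\varepsilon^2\}$ is well posed.

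The main delicate point is the choice of the window of indices: one needs a range of the form $(K_n,c\lambda^{1/(2\mu)}]$ with a constant $c>1$ chosen so that (i) its length is of order $\lambda^{1/(2\mu)}$ under the hypothesis $K_n\lambda^{-1/(2\mu)}\le 1$, and (ii) the argument of $P(|\zeta|\ge\cdot)$ remains bounded, giving a uniform strictly positive lower bound $c_\zeta$. This balance is exactly what produces the exponent $1/(2\mu)$ in the final rate, reflecting the tension between the polynomial decay of the scalings $\sigma_k=k^{-\mu}$ and the requirement of collecting enough non-negligible contributions to the sum.
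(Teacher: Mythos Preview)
Your argument is correct and takes a genuinely different route from the paper. Both proofs start from the exponential Markov bound and reduce to showing $-\sum_{k>K_n}\log E[e^{-\lambda k^{-2\mu}\zeta^2}]\ge C\lambda^{1/(2\mu)}$. The paper compares the sum with an integral, changes variables to extract the factor $\lambda^{1/(2\mu)}$, and then invokes \cite[Lemma~4.3]{FA06} to identify $C=-\int_1^\infty\log E e^{-z^{-2\mu}\zeta^2}\,dz$, whose finiteness is \emph{equivalent} to the moment hypothesis $E|\zeta|^{1/\mu}<\infty$. Your approach is more elementary: a crude per-term bound $-\log E[e^{-s\zeta^2}]\ge (1-e^{-1})P(|\zeta|\ge s^{-1/2})$ followed by counting over the window $(K_n,2\lambda^{1/(2\mu)}]$. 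This avoids the Aurzada lemma entirely and does not use the moment condition; instead it uses the support condition $c_\zeta=P(|\zeta|\ge 2^{\mu})>0$, which you correctly justify via \ref{H1}. For the lemma \emph{as stated} this is an extra hypothesis (a bounded $\zeta$ with $|\zeta|\le 1$ a.s.\ would give $c_\zeta=0$), though it is harmless for the paper's application; if you wish to cover the general non-degenerate case, replace $\mathbf 1_{\{y\ge1\}}$ by $\mathbf 1_{\{y\ge\delta\}}$ for a small $\delta>0$ so the threshold $2^{\mu}\sqrt{\delta}$ can be made arbitrarily small.

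One small correction: your remark that the moment hypothesis only serves to make $S_n$ finite a.s.\ is misleading. The event $\{S_n\le\varepsilon^2\}$ is perfectly well posed regardless (the series has nonnegative terms). In the paper's proof the moment condition is exactly what makes the constant $C$ finite via Aurzada's characterisation; in your proof it indeed plays no role, which means you have actually established the inequality under a slightly different (and in the heavy-tailed context, weaker) assumption.
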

\begin{proof}
The proof follows the ideas of \cite{FA06}. First, notice that for any $\lambda>0$, by the (exponential) Markov inequality it holds
\[\log P(S_n\leq \varepsilon^2)\leq \lambda \varepsilon^2+\log Ee^{-\lambda S_n}.\]
By independence, the second term is equal to
\[\log \prod_{k>K_n}Ee^{-\lambda\sigma_k^2\zeta_k^2}=\sum_{k>K_n}\log Ee^{-\lambda\sigma_k^2\zeta_k^2}\leq \int_{K_n}^\infty \log Ee^{-\lambda x^{-2\mu}\zeta^2}dx,\]
 where in the last bound we have used comparison of sum with an integral (taking advantage of the fact that the expectation in the integrand is increasing with $x$). Changing variables twice, first setting $\sqrt{\lambda}x^{-\mu}=y$ and then $y=z^{-\mu}$, we get that the last integral is equal to 
\[\frac{1}{\mu}\lambda^{\frac{1}{2\mu}}\int_0^{\sqrt{\lambda}K_n^{-\mu}}\big(\log E^{-y^2\zeta^2}\big)y^{-\frac1\mu-1}dy=\lambda^{\frac{1}{2\mu}}\int_{K_n\lambda^{-\frac1{2\mu}}}^\infty\log Ee^{-z^{-2\mu}\zeta^2}dz.\]
Noting that the integrand in the right hand side is non-positive, under the assumption $K_n\lambda^{-\frac1{2\mu}}\leq1$, we have that the right hand side is upper bounded by
\[\lambda^{\frac1{2\mu}}\int_1^\infty \log Ee^{-z^{-2\mu}\zeta^2}dz,\]
where \cite[Lemma 4.3]{FA06} shows that the last integral is finite if and only if $E[|\zeta|^{1/\mu}]<\infty$, as assumed. The claimed bound \eqref{eq:sb} follows by combining the above considerations.
\end{proof}

 \noindent{\Large \textbf{Funding}}
 \medskip
 
 \noindent
    IC acknowledges funding from ANR grant project BACKUP ANR-23-CE40-0018-01

\bibliographystyle{imsart-number}
\bibliography{ace}

\bigskip\bigskip
{\center{\large{\bf SUPPLEMENTARY MATERIAL}}}
\smallskip

\setcounter{section}{0}

\numberwithin{equation}{section}
\numberwithin{theorem}{section}
\numberwithin{lemma}{section}
\renewcommand\thesection{\Alph{section}}
\renewcommand\thefigure{\thesection.\arabic{figure}}   
\renewcommand\thetable{\thesection.\arabic{table}}   
\renewcommand\theremark{\thesection.\arabic{remark}}

{This supplement contains additional proofs and simulations. Appendix \ref{sec : simu HTlb} presents simulations regarding the lower bound results. The proof of Theorem  \ref{thm : OTconc} for the truncated Horseshoe prior distribution is presented in Appendix \ref{app:truncated}, the proof of the in-expectation lower bound of  Theorem \ref{thm : lbprob} in Appendix \ref{app:lbexp}. The proof of the key main result Theorem \ref{thm : besov sparse} providing  rates under Besov smoothness in $L_{p'}$-loss can be found in Appendix \ref{proof : besov sparse}. Finally, Appendix \ref{app:lemmas} contains a number of additional technical Lemmas.}

\section{Additional simulations: suboptimality of HT($\alpha$) priors in the undersmoothing case}\label{sec : simu HTlb}

We again consider the white noise regression model \eqref{def : GWN}, this time expanded in the orthonormal basis $\varphi_k(t)=\sqrt{2}\sin(\pi k t)$. 
leading to the normal sequence model \eqref{def : nseq}. As underlying truth, we use a function with coefficients with respect to $(\varphi_k)$ given by $f_{0,k}=k^{-2.25}\sin(10k)$. This is the setting studied in \cite[Section 3]{svv13}. In particular, this true function can be thought of as having Sobolev regularity (almost) $\beta=1.75$.

We consider HT($\alpha$) priors (given by \eqref{def : HTprior}) based on the Student distribution with $3$ degrees of freedom, for four choices of regularity $\alpha$, 0.75 and 1.25 (undersmoothing), 1.75 (match) and 2.75 (oversmoothing). Similarly to the previous subsection, we again exploit the independence structure of the model, and employ Stan to sample the one dimensional posteriors. In all cases we again truncate at $K=200$. 

In Figure \ref{fig-lb}, we present posterior sample means as well as 95\% credible regions for noise precision parameters $n=2\times 10^2$ (top row) and $n=2\times 10^4$ (bottom row). As expected by the theory, the matched and oversmoothed priors perform very well (cf. \cite[Theorem 1]{AC}), while the two undersmoothing priors lead to too rough posterior means and perform very poorly (see Theorem \ref{thm : lbprob}).

\begin{figure}
    \centering
     \includegraphics[width=0.97\textwidth]{./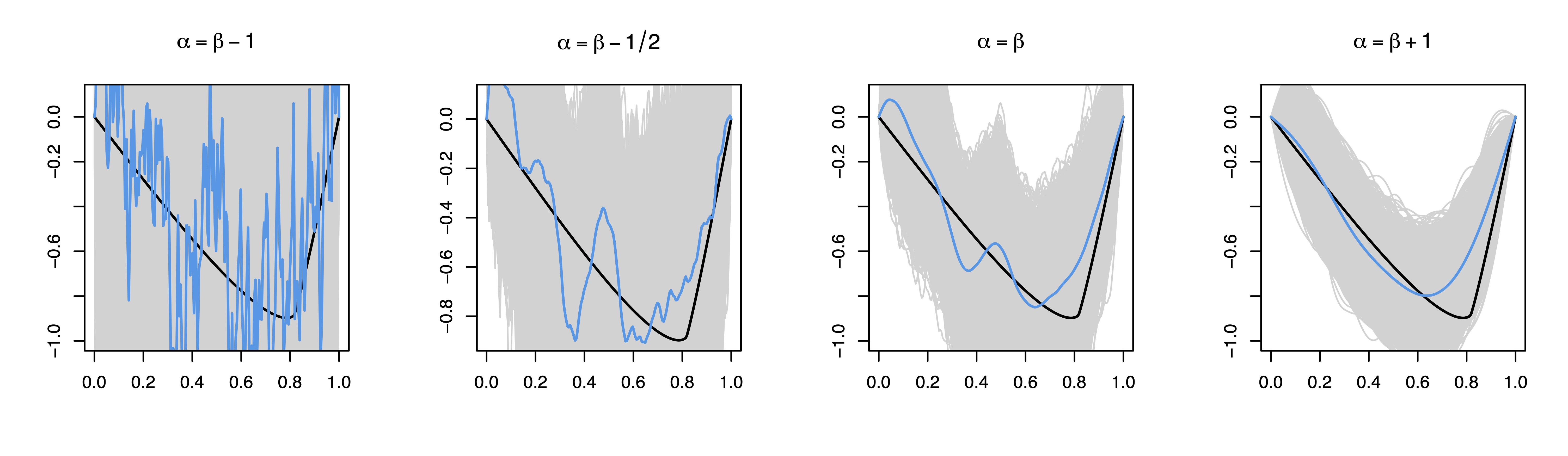}
    \includegraphics[width=0.97\textwidth]{./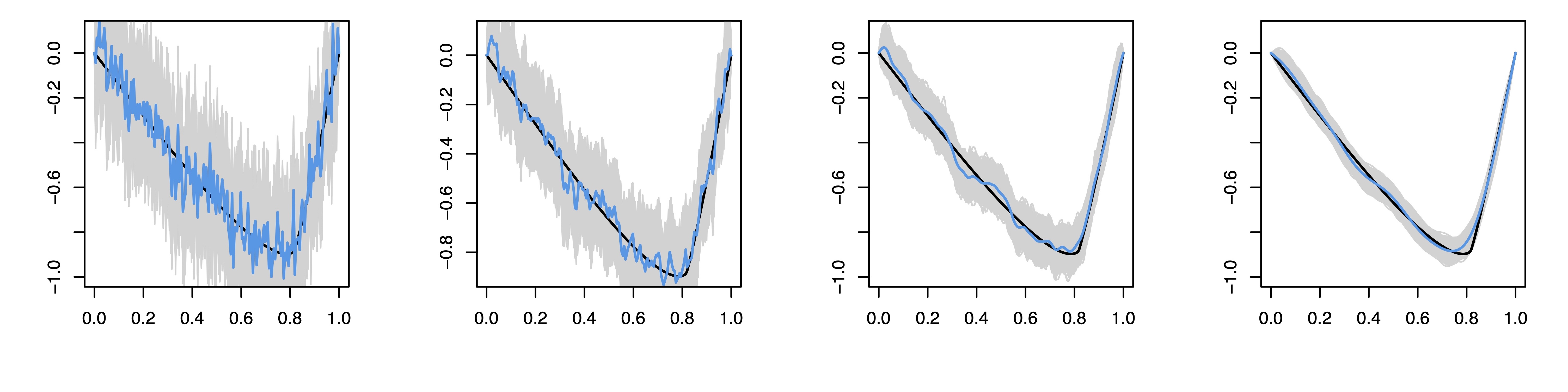}

    \caption{White noise model: true function (black), posterior means (blue), 95\% credible regions (grey), for $n=2\times 10^2$ (top row) and $n=2\times 10^4$ (bottom row). Student HT($\alpha$) prior with $\nu=3$ degrees of freedom, for $\alpha=0.75, 1.25, 1.75, 2.75$ left to right, where the Sobolev regularity of the truth $\beta$ is (almost) 1.75.}
    \label{fig-lb}
\end{figure}

\section{Proof of Theorem 1 for the truncated Horseshoe} \label{app:truncated}
Following the proof of Theorem \ref{thm : OTconc}, we take $K_n := (n/\log n)^{1/(2 \beta +1)}$ and $v_n = K_n (\log n /n) =  K_n^{-2 \beta}$. Taking care first of indices $k \leq K_n$ we only need to show,  for any $M_n \to \infty$,
\[E_{f_0} \sum_{k \leq K_n} \int (f_k-X_k)^2 \, d \Pi(f \, | \, X) = o(M_n v_n).\]
Recalling that $F$ is a bound on $f_0$, applying Lemma \ref{lem : lap} with $p=2$ and Lemma \ref{lem : lapHS} on coordinate $k$ we get, for any $t \in \R$
\[ n E_{f_0} \int (f_k-X_k)^2 \, d \Pi(f \, | \, X) \lesssim t^{-2} (1 + t^4 + \log^2 \left[ \frac{\tau \sqrt{n}}{\log \left( 1 + \frac{4 \tau^2}{( F + 1)^2} \right) }\right] ).\]
Since $\tau = n^{-a}$ with $a>0$, whenever $n$ is large enough such that $4 \tau^2 < (F+1)^2$, the right hand side of the last display is bounded (up to constant) by 
\[ t^{-2} (1 + t^4 + \log^2(\tau^{-1} \sqrt{n})).\]
This bound is optimized by taking $t$ of the order of $\sqrt{\log n}$, which leads to 
\[ E_{f_0} \sum_{k \leq K_n} \int (f_k-X_k)^2 \, d \Pi(f \, | \, X) \lesssim \sum_{k \leq K_n} \frac{\log n}{n } \lesssim \frac{K_n}{n} \log n = v_n,\]
this last bound is $o(M_n v_n)$ for any $M_n \to \infty$. 
Now for the terms $k > K_n$, since the Horseshoe prior is truncated after $k =n$, this term reduces to \[ E_{f_0} \post{  \{ f \, : \, \sum_{K_n < k \leq n} |f_k|^2 > v_n/8 \} } .\]
We can follow the rest of the proof of Theorem \ref{thm : OTconc} with $\sigma_k$ replaced by $\tau$, noting that the Horseshoe HS($1$) satisfies condition (H3) thanks to the inequality \eqref{eq : HSandwich}. Recalling $z_k := k^{-1} \log^{-2}k$, as long as $\tau^{-1}\sqrt{{Mv_n}{z_k}} \geq 1$ for $n \geq k > K_n$ (condition which is satisfied for any $\tau =n^{-a}$ and $a >1$), we have 
\[ E_{f_0} \post{  \{ f \, : \, \sum_{K_n < k \leq n} |f_k|^2 > v_n/8 \} } \lesssim \sum_{K_n < k \leq n} n^2 \frac{\tau}{\sqrt{{Mv_n}{z_k}}} + P_{f_0}(A_n^c),\]
where $A_n = \bigcap_{ K_n < k \leq n, k \in N}A_{k,0}$ is the event as in \eqref{def : eventAn}.
The sum in the last display goes to $0$ when $n \to \infty$ for $\tau = n^{-a}$ and $a \ge4$.

\section{Proof of the in-expectation lower bound in Theorem 2} \label{app:lbexp}
To show the in--expectation bound, first define the set
\[ \mathcal{N}_n = \{ k \, : \, |f_{0,k}| > 1/\sqrt{n} \}.\]
Denoting the cardinality of $\mathcal{N}_n$ as $N_n$, we have
\[ F^2 \geq \sum_{k \in \mathcal{N}_n} k^{2\beta} |f_{0,k}|^2 \geq n^{-1} \sum_{k \in \mathcal{N}_n} k^{2\beta} \geq n^{-1}\sum_{k=1}^{N_n} k^{2\beta} \geq n^{-1} (2\beta+1)^{-1}(N_n)^{2 \beta +1}. \]
Combining this with the assumption $\alpha < \beta$, we get
\[N_n \leq (nF^2(2\beta+1))^{1/(2 \beta +1)} \leq ((2\beta+1)F^2)^{1/(2 \beta +1)} K_{\alpha} =: \widetilde{K}_{\alpha}.\]
For any $k$, define the event,
\[A_k := \left\{ |X_k| \leq 2 /\sqrt{n} \right \}.\]
To establish the in--expectation lower bound, we first restrict the set of indices to
\[ \{k > \widetilde{K}_{\alpha} \} \cap \mathcal{N}_n^c = \{ k > \widetilde{K}_{\alpha} \, : \, |f_{0,k}| \leq 1/\sqrt{n}\} .\]
Using for all $k$, $|f_k - f_{0,k} |^2 \geq |f_k|^2/2 - |f_{0,k}|^2$, we get
\begin{align*}
    E_{f_0} \int \| f - f_0 \|_2^2 \, d\Pi(f \, | \, X) & \geq \sum_{\{k > \widetilde{K}_{\alpha} \} \cap \mathcal{N}_n^c}  E_{f_0}\left( \int |f_k - f_{0,k} |^2 \, d\Pi(f \, | \, X) \right) \\
    & \geq \sum_{\{k > \widetilde{K}_{\alpha} \} \cap \mathcal{N}_n^c}  E_{f_0}\left( \int \frac12 |f_k|^2 \, d\Pi(f \, | \, X) \mathbf{1}_{A_k} \right) - \sum_{\{k > \widetilde{K}_{\alpha} \} \cap \mathcal{N}_n^c} |f_{0,k}|^2.
\end{align*}
First taking care of the non-stochastic sum, since $f_0 \in S^{\beta}(F)$ and $\widetilde{K}_{\alpha} \asymp K_{\alpha}$, we have
\[\sum_{\{k > \widetilde{K}_{\alpha} \} \cap \mathcal{N}_n^c} |f_{0,k}|^2 \leq \sum_{k > \widetilde{K}_{\alpha}} |f_{0,k}|^2 \lesssim K_{\alpha}^{-2 \beta} .\]
For the remaining sum, recall that $\phi$ denotes the standard Gaussian density function, we now study
\[ E_{f_0}\left( \int |f_k|^2 \, d\Pi(f \, | \, X) \mathbf{1}_{A_k} \right) = E_{f_0}\left( \frac{\int \theta^2 \phi(\sqrt{n}(X_k - \theta)) h(\theta / \sigma_k) \, d\theta}{\int \phi(\sqrt{n}(X_k - \theta)) h(\theta / \sigma_k) \, d\theta} \mathbf{1}_{A_k} \right) =: E_{f_0}\left( \frac{T}{B} \mathbf{1}_{A_k} \right) .\]
Using $\phi \lesssim 1$, we can bound the denominator as $B \lesssim  \sigma_k$. To bound the numerator from below we set $x_k := 2 /\sqrt{n}$ such that for all $k > \widetilde{K}_\alpha$ we have $\sigma_k \leq \widetilde{K}_\alpha^{-\alpha - 1/2} \lesssim x_k$. We can then use Lemma \ref{lem : tech lower bound} with $m=2$ on the event $A_k = \{ |X_k| \leq x_k \}$, we get 
\begin{align*}
    T &\gtrsim \phi(\sqrt{n} x_k) \sigma_k^3 \gtrsim \sigma_k^3.
\end{align*}
Therefore, we obtain
\[\sum_{\{k > \widetilde{K}_{\alpha} \} \cap \mathcal{N}_n^c} E_{f_0}\left( \int \frac12 |f_k|^2 \, d\Pi(f \, | \, X) \mathbf{1}_{A_k} \right) \gtrsim \sum_{\{k > \widetilde{K}_{\alpha} \} \cap \mathcal{N}_n^c} \sigma_k^2 \, P_{f_0}(A_k).\]
For any $k \in \mathcal{N}_n^c$, we have $|f_{0,k}| \leq 1 / \sqrt{n} $, for such $k$'s, one has
\[ P_{f_0}(A_k^c) \leq Pr( |N(0,1)| > 1 ) \leq 1/2.\] 
This allows to bound the right hand side of the second to last display as \[\sum_{\{k > \widetilde{K}_{\alpha} \} \cap \mathcal{N}_n^c} \sigma_k^2 \, P_{f_0}(A_k) \gtrsim \sum_{\{k > \widetilde{K}_{\alpha} \} \cap \mathcal{N}_n^c} \sigma_k^2.\]
Recalling $ |\mathcal{N}_n| = N_n \leq \widetilde{K}_{\alpha} $ and using the fact that $(\sigma_k)$ is non-increasing, we get
\[ \sum_{\{k > \widetilde{K}_{\alpha} \} \cap \mathcal{N}_n^c} \sigma_k^2 \geq \sum_{k > 2\widetilde{K}_{\alpha}} \sigma_k^2 \gtrsim K_{\alpha}^{-2 \alpha},\]
where for the last inequality we use $\widetilde{K}_{\alpha} \asymp K_\alpha $. Finally, since $\alpha < \beta$, as $n$ is large enough,
\[ E_{f_0} \int \| f - f_0 \|_2^2 \, d\Pi(f \, | \, X)  \gtrsim K_{\alpha}^{-2 \alpha} - K_{\alpha}^{-2 \beta} \gtrsim K_{\alpha}^{-2 \alpha}.\]

\section{Proof for sparse Besov rate Theorem 3}\label{proof : besov sparse}

Throughout this proof we might write $\mathcal{L}_n$ for logarithmic factors of the form $\log^{D}n$ and some constant $D>0$, which may change from one line to another. Because we look at functions on $[0,1]$, the $L_{p'}$ spaces get smaller with $p'$. Since the minimax rate (given by \eqref{def : rateexp}) is independent of $p'$ in the regular region $\{p' \leq p\}$ we can reduce the case $p' \leq p$ to only $p'=p$. In what follows in the proof we focus on the case $p \leq p' < \infty.$ Recalling the definitions \eqref{def : indexeta} of $\eta$, \eqref{def : s'} of $s'$ and \eqref{def : rateexp} of $r$, we consider
\begin{align}\label{def : J0J1} 
2^{J_0} &: =  n^{1-2 r} \qquad \text{and} \qquad 2^{J_1} := n^{r/s'}.
\end{align}
Note that since $p' \ge p$, we have $J_0 \leq J_1$ with equality iff $p = p'$. In fact, one can see that for the homogeneous case $p=p'$, the proof becomes similar to that of Theorem \ref{thm : OTconc} (where $p=p' =2$ ) as there is a unique resolution level $J_0 = J_1$ (called $K_n$ in Theorem \ref{thm : OTconc}). To avoid repetition of the same techniques, for the rest of the proof we focus on the more elaborate inhomogeneous case $p’>p$. 
\begin{align}
    \label{eq : eq1} (p'-p)/2 + \eta(1-2r) &= rp' \quad \text{if } \eta > 0, \\
    \label{eq : eq2} (p'-p)/2 + \eta r/s' &= rp' \quad \text{if } \eta \leq 0.
\end{align}
For a function $f \in L_2$ denote by $f^{[J_1]}$ and $f^{[J_1^c]}$ the projections of $f$ onto the span of the wavelets $\{\psi_{jk}\}_{j < J_1,k}$ and $\{\psi_{jk}\}_{j \geq J_1,k}$ respectively. Consider $v_n := n^{-r} \log^\delta n$ the targeted rate, with $\delta >0$ to be chosen large enough below. Using the triangle inequality for the $L_{p'}$-norm and a union bound yields
\begin{align*}
\post{ \{ f \, : \, ||f - f_0||_{p'} > v_n \}} &\leq \post{ \{ f \, : \, ||f^{[J_1^c]} - f_0^{[J_1^c]}||_{p'} > v_n/2 \}}\\
&+ \post{ \{ f \, : \, ||f^{[J_1]} - f_0^{[J_1]}||_{p'} > v_n/2 \}}.
\end{align*}
We show that under $E_{f_0}$ both of the previous displayed terms go to $0$ with $n \to \infty$. Let us first take care of the indices $j \geq J_1$. Using Lemma \ref{lem : majlp} for $f_0$ with $J^- = J_1$ and $J^+ = \infty$, we get
\[||f_0^{[J_1^c]}||_{p'} \lesssim \sum_{j \geq J_1} 2^{j(1/2 - 1/p')} ||f_{0,j\cdot}||_{p'}.\]
Using $s-1/p =s'-1/p'$ and the embedding $\ell_{p} \subset \ell_{p'}$, available for $p' \geq p$
    \[ ||f_0^{[J_1^c]}||_{p'} \lesssim \sum_{j \geq J_1}  2^{-js'} 2^{j(1/2 +s - 1/p)} ||f_{0,j\cdot}||_{p} \lesssim \sup_{j \geq J_1}\{ 2^{j(1/2 +s - 1/p)}  ||f_{0,j\cdot}||_{p}\} \sum_{j \geq J_1}  2^{-js'}.\]
The embedding $B_{pq}^s(F) \subset B_{p\infty}^s(F)$ shows that the supremum in the last display is bounded by $F$ and 
\[ ||f_0^{[J_1^c]}||_{p'} \lesssim F 2^{-J_1 s'} \lesssim n^{-r},\]
where we have last used \eqref{def : J0J1} the definition of $J_1$.
Therefore, it holds for $n$ large enough that $||f_0^{[J_1^c]}||_{p'} \leq v_n/4$. So,
\[ \post{ \{ f \, : \, ||f^{[J_1^c]} - f_0^{[J_1^c]}||_{p'} > v_n/2 \}} \le \post{ \{ f \, : \, ||f^{[J_1^c]}||_{p'} > v_n/4 \}}.\]
Once again, applying Lemma \ref{lem : majlp} to $f^{[J_1^c]}$ with $p'$ shows that it is sufficient to control, for a large enough constant $M>0$,
\begin{equation}\label{eq : term>J1}
    E_{f_0}\post{ \{ f \, : \, \sum_{j \geq J_1} 2^{j(1/2 - 1/p')} ||f_{j\cdot}||_{p'} >M v_n \}}.
\end{equation}
Using the summability of $(2^{-j/2})_j$ (note that this sequence plays the role of ($z_k$) in the proof of Theorem \ref{thm : OTconc}), we get the following bound
\begin{align*}
    \sum_{j \geq J_1} 2^{j(1/2 - 1/p')} ||f_{j\cdot}||_{p'} = \sum_{j \geq J_1} 2^{j(1/2 - 1/p')} ( \sum_k |f_{jk}|^{p'} )^{1/p'} \le \sum_{j \geq J_1} 2^{j/2} \sup_k |f_{jk}|  \lesssim \underset{j\geq J_1 , k}{\sup}\{ 2^j |f_{jk}|\}.
\end{align*}
Plugging the previous inequality in \eqref{eq : term>J1} and applying a union bound, we are left to control, for a large enough constant $M'>0$, and any event $A_n$,
\begin{equation*}
    E_{f_0}\post{ \{ f \, : \,  \underset{j\geq J_1 , k}{\sup}\{ 2^j |f_{jk}| > M' v_n \}} \leq \sum_{j\geq J_1} \sum_k E_{f_0}\left(\post{ \{ f \, : \,  |f_{jk}| > M' 2^{-j} v_n \}} \mathbf{1}_{A_n} \right) + P_{f_0}(A_n^c).
\end{equation*}
Writing $\phi$ for the standard Gaussian density and following the proof of Theorem \ref{thm : OTconc}, we can employ Lemma \ref{lem : tech lower bound} with $m =0$ on coordinates $(j,k)$ for $x_{jk}$ to be chosen below, and obtain
\begin{equation}\label{newproof eventcontrol}
    E_{f_0}\left(\post{ \{ f \, : \,  |f_{jk}| > M' 2^{-j} v_n \}} \mathbf{1}_{A_n} \right) \lesssim E_{f_0} \left( \frac{1}{\phi(\sqrt{n}x_{jk})} 2 \overline{H}\left( \sigma_j^{-1}M'v_n 2^{-j}\right) \mathbf{1}_{A_n} \right) .
\end{equation}
Now for any $j \geq J_1 \gtrsim \log_2 n$, since $\sigma_j = 2^{-j^2}$, for $n$ large enough we have $\sigma_j^{-1}M'v_n 2^{-j} \geq 1$ so we use assumption (H3) and obtain
\[\overline{H}\left( \sigma_j^{-1}M'v_n 2^{-j}\right) \lesssim \frac{2^j\sigma_j}{M' v_n}.\]
To define the event $A_n$, first consider the sets of indices (the set $\Lambda_n$ will be usefull further down the proof for indices $j < J_1$)
\begin{equation}\label{eq : newdeflambdaI}
     \Lambda_n := \left\{ (j,k) \, : \, J_0 < j < J_1, \, |f_{0,jk}| \leq 1/ \sqrt{n} \right\} \qquad \text{and} \qquad \mathcal{I}:= \Lambda_n \cup \{(j,k) \, : \, j \geq J_1 \}.
\end{equation}
Now consider for any $l \geq 0$
\[ \mathcal{A}_{jk,l} := \left\{ |X_{jk}| \leq \sqrt{\frac{4(l+1) \log n}{n}}\right\},\]
and define the event
\begin{equation}\label{def : newAn}
    A_n := \bigcap_{J_0 < j \leq \log_2 n \, , \, (j,k) \in \mathcal{I}} \mathcal{A}_{jk,0} \, \cap \,\bigcap_{l \geq 1}  \bigcap_{l \log_2 n < j \leq (l+1) \log_2 n \, , \, (j,k) \in \mathcal{I}} \mathcal{A}_{jk,l}.
\end{equation}
For $l\geq 0$, let us set $x_{jk} := \sqrt{(4(l+1)\log n)/n}$ whenever $l \log_2(n) <j \leq (l+1) \log_2 n $. For every $j > J_0$ we have $\sigma_j \lesssim 1/\sqrt{n}$ when $n$ is large enough, therefore the constrains $|X_{jk}|\leq x_{jk}$ and $\sigma_j \lesssim x_{jk}$ are satisfied on the event $A_n$. We can use Lemma \ref{lem : tech lower bound} with $m=0$ on the event $A_n$ to obtain the bound in \eqref{newproof eventcontrol}. Noting that $\phi(\sqrt{n}x_{jk}) \gtrsim n^{-2(l+1)}$ when $l \log_2(n) <j \leq (l+1) \log_2 n $ leads to 
\begin{align*}
     & \sum_{j\geq J_1} \sum_kE_{f_0}\left(\post{ \{ f \, : \,  |f_{jk}| > M' 2^{-j} v_n \}} \mathbf{1}_{A_n} \right) \\
     & \lesssim  \sum_{J_0 < j \leq \log_2 n} \sum_k n^2 \frac{2^j\sigma_j}{M' v_n} + \sum_{l \geq 1}\sum_{j = l \log_2 n - 1}^{(l+1) \log_2 n} \sum_k n^{2(l+1)} \frac{2^j\sigma_j}{M' v_n}.
\end{align*}
Recalling $\sigma_j = 2^{-j^2}$, $v_n = n^{-r} \log^\delta n$ and $J_0 \asymp \log_2n$ the sums of the last display satisfy, as $n \to \infty$
\[\sum_{J_0 < j \leq \log_2 n} \sum_k n^2 \frac{2^j\sigma_j}{M' v_n}=\sum_{J_0 < j \leq \log_2 n} n^2\frac{2^{2j}2^{-j^2}}{M' v_n} \leq \frac{n^4}{M' v_n} 2^{-J_0^2}\log_2n = o(1),\]
\begin{align*} \sum_{l \geq 1}\sum_{j = l \log_2 n - 1}^{(l+1) \log_2 n} \sum_k n^{2(l+1)}& \frac{2^j\sigma_j}{M' v_n}=\sum_{l \geq 1}\sum_{j = l \log_2 n - 1}^{(l+1) \log_2 n} n^{2(l+1)} \frac{2^{2j}2^{-j^2}}{M' v_n} \\&\leq  \frac{1}{M' v_n}\sum_{l \geq 1}  n^{4(l+1)} 2^{-l^2 \log_2^2n} = o(1).\end{align*}
Using Lemma \ref{lem : pAn} gives $P_{f_0}(A_n^c) \to 0$, as $n \to \infty$, we obtain 
\[E_{f_0}\post{ \{ f \, : \, ||f^{[J_1^c]} - f_0^{[J_1^c]}||_{p'} > v_n/2 \}} = o(1).\]
We are now left to control the indices $j < J_1$. Applying Lemma \ref{lem : majlp} with the $L_{p'}$ norm, $J^- = -1$ and $J^+ = J_1$ and noting that $J_1 \lesssim \log n$, leads to
\begin{align*}
    || f^{[J_1]} - f_0^{[J_1]} ||_{p'} &\lesssim (\log n)^{1 - 1/p'} \left( \sum_{j < J_1} \sum_k 2^{j(p'/2 - 1)}|f_{jk} - f_{0,jk}|^{p'}\right)^{1/p'}.
\end{align*}
If $\delta>0$ is large enough, it is then sufficient to control
\begin{equation}\label{eq : term<J1}
    E_{f_0} \post{ \{ f \, : \, \sum_{j < J_1} \sum_k 2^{j(p'/2 - 1)}|f_{jk} - f_{0,jk}|^{p'} > Mv_n^{p'} \}}.
\end{equation}
Let $\mathcal{N}_n$ be the set of indices $(j,k)$ with resolution level $j \in (J_0,J_1)$ and high valued signal coefficients
\begin{equation}\label{def : highval}
    \mathcal{N}_n := \left\{ (j,k) \, : \, J_0<j<J_1, \, |f_{0,jk}| > 1/ \sqrt{n} \right\}.
\end{equation}

Recall \eqref{eq : newdeflambdaI} the definition of the set $\Lambda_n$, we have the decomposition
\[\{ (j,k) \, : \, j < J_1\}=\{ (j,k) \, : \, j \leq J_0\} \cup \mathcal{N}_n \cup \Lambda_n.\]
Using the triangle inequality and a union bound we split the sum in \eqref{eq : term<J1} into three terms according to the previous decomposition. The first of these three terms is  
\begin{equation*}
    E_{f_0} \post{ \{ f \, : \, \sum_{j < J_0} \sum_k 2^{j(p'/2 - 1)}|f_{jk} - f_{0,jk}|^{p'} > (M/3)v_n^{p'} \}} .
\end{equation*} 
Applying Markov's inequality reduces the study of this first term to obtain a bound on
\begin{equation}\label{eq : defA}
    \mathbf{A} :=\sum_{j \leq J_0} \sum_k 2^{j(p'/2 - 1)} E_{f_0}  \int |f_{jk} - f_{0,jk}|^{p'} \, d \Pi(f \, | \, X)  .
\end{equation} 
From the convexity inequality $|x+y|^{p'} \lesssim |x|^{p'} + |y|^{p'}$, one gets
\[ |f_{jk} - f_{0,jk}|^{p'} \lesssim |f_{jk} - X_{jk}|^{p'} + |X_{jk} - f_{0,jk}|^{p'}.\]
Under $E_{f_0}$, the observation $(X_{jk})$ in model \eqref{def : nseqwav} is distributed as $\mathcal{N}(f_{0,jk},1/n)$, therefore $E_{f_0}(|X_{jk}-f_{0,jk}|^{p'}) \lesssim n^{-p'/2}$. Along with $\mathbf{1}_{A_n}\leq 1$, this leads to the bound,
\begin{equation}\label{eq : boundA}
    \mathbf{A} \lesssim n^{-p'/2} \sum_{j \leq J_0} 2^{jp'/2} + \sum_{j \leq J_0} \sum_k 2^{j(p'/2 - 1)} E_{f_0} \left(\int |f_{jk} - X_{jk}|^{p'} \, d \Pi(f \, | \, X)\right).
\end{equation} 
Applying Lemma \ref{lem : lap} with $p'$ on coordinates $\{(j,k) \, : j \leq J_0$\}, gives, for any $t > 0$
\[E_{f_0} \left( \int |f_{jk} - X_{jk}|^{p'} \, d \Pi(f \, | \, X) \right) \lesssim (t\sqrt{n})^{-p'} \left[ 1 + \log^{p'} E_{f_0} \left( \int e^{t\sqrt{n}|X_{jk}-f_{0,jk}|} \, d\Pi(f \, | \, X)\right) \right] .\]
Note that since $f_0 \in B_{pq}^s(F)$, we have $|f_{0,jk}| \leq F$, using Lemma \ref{lem : lapHTnew} to bound the Laplace transform of the posterior with prior $f_{jk} = \sigma_j \zeta_{jk}$ and using $|x+y|^{p'} \lesssim |x|^{p'} + |y|^{p'}$,

\[E_{f_0}\left( \int |f_{jk} - X_{jk}|^{p'} \, d \Pi(f \, | \, X) \right) \lesssim (\sqrt{n}t)^{-p'}\left[ 1 + \log^{p'}(\sigma_j \sqrt{n}) +  t^{2p'} + \log^{p'(1 + \kappa)}\left( 1 + \frac{F + 1/\sqrt{n}}{\sigma_j} \right)\right].\]
Now since $\sigma_j = 2^{-j^2}$ and $j \leq J_0 \asymp \log n$ we have $\sigma_j^{-1} \leq e^{C \log^2 n }$ for some $C > 0$. The bound then becomes, as $n$ gets large enough,

\[E_{f_0}\left( \int |f_{jk} - X_{jk}|^{p'} \, d \Pi(f \, | \, X) \right) \lesssim (\sqrt{n}t)^{-p'}\left( 1 +  t^{2p'} + \log^{2p'(1 + \kappa)}n \right).\]
This bound is optimized by $t^{2p'}$ of the order of the log factor in the last display which leads to
\begin{equation}\label{eq : boundafterlaplace}
    E_{f_0}\left( \int |f_{jk} - X_{jk}|^{p'} \, d \Pi(f \, | \, X) \right) \lesssim n^{-p'/2}\mathcal{L}_n.
\end{equation}
Recall \eqref{def : J0J1} the definition of $J_0$, plugging the previous bound in \eqref{eq : boundA} leads to 
\[ \mathbf{A} \lesssim \mathcal{L}_n n^{-p'/2} \sum_{j \leq J_0} 2^{jp'/2} \lesssim \mathcal{L}_n \left( \frac{2^{J_0}}{n}\right)^{p'/2} \lesssim \mathcal{L}_n n^{-rp'} .\]
This last bound shows that $\mathbf{A}=O(v_n^{p'})$ for $\delta >0$ large enough, ensuring that, as $n \to \infty$,
\[E_{f_0} \post{ \{ f \, : \, \sum_{j \leq J_0} \sum_k 2^{j(p'/2 - 1)}|f_{jk} - f_{0,jk}|^{p'} > (M/3)v_n^{p'} \}} = o(1)\]
and thus taking care of the first term in the decomposition of \eqref{eq : term<J1}. The second term we need to bound is the sum restricted on $\mathcal{N}_n$ (defined in \eqref{def : highval}). Applying again Markov's inequality leave us to bound
\begin{equation}\label{eq : defB}
    \mathbf{B} := \sum_{(j,k) \in \mathcal{N}_n} 2^{j(p'/2 - 1)} E_{f_0} \int |f_{jk} - f_{0,jk}|^{p'} \, d \Pi(f \, | \, X) .
\end{equation}
On the set $\mathcal{N}_n \subset \{(j,k) \, : \, J_0 < j < J_1\}$, using the monotonicity of $(\sigma_j)$ and recalling the definition \eqref{def : J0J1} of $J_1$, we obtain $\log(\sigma_j^{-1}) \leq \log(\sigma_{J_1}^{-1}) \leq \log(2^{J_1^2}) \lesssim \log^2 n$. We now do the same split as in the term $\mathbf{A}$ and use Lemmas \ref{lem : lap} and \ref{lem : lapHTnew} to obtain the bound \eqref{eq : boundafterlaplace} in the term $\mathbf{B}$. Moreover, since $|f_{0,jk}| > 1/\sqrt{n}$ on $\mathcal{N}_n$ we have 
\begin{align*}
    \mathbf{B} &\lesssim \mathcal{L}_n n^{-p'/2} \sum_{(j,k) \in \mathcal{N}_n} 2^{j(p'/2-1)} \lesssim \mathcal{L}_n  n^{(p-p')/2} \sum_{(j,k) \in \mathcal{N}_n} 2^{j(p'/2-1)} |f_{0,jk}|^p.
\end{align*}
 Using first $\mathcal{N}_n \subset \{(j,k) \, : \, J_0 <j < J_1\}$ and then $||f_{0,j\cdot}||_p^p\leq F^p 2^{-jp(s -1/p +1/2)}$, we get
\begin{align*}
    \mathbf{B} \lesssim \mathcal{L}_n n^{(p-p')/2} \sum_{J_0 < j < J_1}2^{j(p'/2-1)} ||f_{0,j\cdot}||_p^p \lesssim \mathcal{L}_n n^{(p-p')/2} \sum_{J_0 < j < J_1} 2^{-j \eta},
\end{align*}
where $\eta$ is the index defined in \eqref{def : indexeta}. If $\eta < 0$, recalling \eqref{def : J0J1} the definition of $J_1$, we employ equation \eqref{eq : eq2} and obtain
\[ \mathbf{B} \lesssim \mathcal{L}_n n^{(p-p')/2} 2^{-J_1\eta} \lesssim \mathcal{L}_n n^{(p-p')/2} n^{- \eta r/s'} \lesssim \mathcal{L}_n n^{- r p'}.\]
If $\eta > 0$, applying equation \eqref{eq : eq1} we find
\[ \mathbf{B} \lesssim \mathcal{L}_n n^{(p-p')/2} 2^{-J_0 \eta} \lesssim \mathcal{L}_n n^{-r p'}.\]
Finally, when $\eta = 0$, we have $p' = (2 s +1)p$ and therefore $(p'-p)/2 = sp = r p' $ 
such that $\mathbf{B} \lesssim \mathcal{L}_n n^{-r p'}$ (note that in the case $\eta = 0$ there is an extra log factor in $\mathcal{L}_n$). We have shown that $\mathbf{B} = O(v_n^{p'})$ and thus the second term in the decomposition of \eqref{eq : term<J1} satisfies, as $n \to \infty$,
\[E_{f_0} \post{ \{ f \, : \, \sum_{(j,k) \in \mathcal{N}_n} 2^{j(p'/2 - 1)}|f_{jk} - f_{0,jk}|^{p'} > (M/3)v_n^{p'} \}} = o(1).\]
The last term we need to control is the probability involving the sum restricted to $\Lambda_n$. Recalling that $|f_{0,jk}| \leq 1/\sqrt{n}$ on $\Lambda_n$ and using $p' \geq p$, we can see that
\[ \sum_{(j,k) \in \Lambda_n} 2^{j(p'/2-1)} |f_{0,jk}|^{p'} \leq n^{(p-p')/2} \sum_{J_0 < j < J_1} \sum_k 2^{j(p'/2 -1)} |f_{0,jk}|^p ,\]
which is bounded as in the study of the $\mathbf{B}$ term by $ \mathcal{L}_nn^{- r p'} \leq v_n^{p'}$ for $\delta >0$ large enough. Using $|f_{jk} - f_{0,jk}|^{p'} \lesssim |f_{jk}|^{p'} + |f_{0,jk}|^{p'}$, it is then sufficient to control
\[E_{f_0} \post{ \{ f \, : \, \sum_{(j,k) \in \Lambda_n} 2^{j(p'/2 - 1)}|f_{jk}|^{p'} > (M/3)v_n^{p'} \}} .\]
A union bound can be used as in the previous study of the indices $j > J_1$, this leaves us to control
\[\sum_{(j,k) \in \Lambda_n}E_{f_0} \post{ \{ f \, : \,  |f_{jk}| > M'2^{-j}v_n \}} .\]
Noting that $\Lambda_n \subset \{ (j,k) \, : \, J_0 < j <J_1 \}$ and that $J_0 \asymp \log n$, we follow the same steps as in the aforementioned study and show that the quantity in the last display goes to $0$ with $n \to \infty$, achieving the control of the three terms and ensuring that, as $n \to \infty$,
\[E_{f_0}\post{ \{ f \, : \, ||f^{[J_1]} - f_0^{[J_1]}||_{p'} > v_n/2 \}} = o(1),\]
which concludes the proof.

\section{Additional Lemmas} \label{app:lemmas}

\begin{lemma}\label{lem : lapHS}
    Let $\theta \sim HS(\tau)$, for $\tau >0$ and $X|\theta \sim \mathcal{N}(\theta, 1/n)$. Then for some $C > 0$, it holds, for all $t \in \R$, $\theta_0 \in \R$, $\tau> 0$,

\[ E_{\theta_0} E_{ \theta \sim HS(\tau)} \left[e^{t\sqrt{n}(\theta-X)} \, | \, X \right] \leq C \frac{\tau \sqrt{n}}{\log \left( 1 + \frac{4 \tau^2}{( |\theta_0| + 1)^2} \right)} e^{t^2/2}\]
\end{lemma}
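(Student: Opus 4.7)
The strategy is to mirror the proof of Lemma~\ref{lem : lapHTnew}, replacing the use of condition \ref{H2} (which would give only a polynomial bound in $(|\theta_0|+1)/\tau$) by the explicit lower bound in the sandwich inequality \eqref{eq : HSandwich}, so as to get the precise $1/\log$-form stated in the lemma.

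First, I would rewrite the Laplace transform as
\[
E_{\theta_0}E^{HS(\tau)}\!\left[e^{t\sqrt{n}(\theta-X)}\,|\,X\right]
 = E_{\xi\sim\mathcal{N}(0,1)}\,\frac{N(\xi)}{D(\xi)},
\]
where, after the substitutions $X=\theta_0+\xi/\sqrt{n}$ and $v=\sqrt{n}(\theta-\theta_0)$, one gets
\[
N(\xi)=\frac{e^{t^2/2}}{\sqrt{n}\,\sqrt{2\pi}}\int e^{-(t-(v-\xi))^2/2}\,h_\tau\!\Big(\theta_0+\tfrac{v}{\sqrt{n}}\Big)\,dv,
\qquad
D(\xi)=\frac{1}{\sqrt{n}}\int \phi(\xi-v)\,h_\tau\!\Big(\theta_0+\tfrac{v}{\sqrt{n}}\Big)\,dv,
\]
exactly as in the first display of the proof of Lemma~\ref{lem : lapHTnew}. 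The numerator is then bounded uniformly in $\xi$ via $e^{-(\cdot)^2/2}\le 1$ and the substitution $u=\theta_0+v/\sqrt{n}$, which together with $\int h_\tau =1$ give $N(\xi)\le e^{t^2/2}/\sqrt{2\pi}$.

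The key step is the lower bound on $D(\xi)$. I would restrict integration to $v\in[-1,1]$, so that $|\theta_0+v/\sqrt{n}|\le |\theta_0|+1/\sqrt{n}$. Using that $h_\tau$ is symmetric and decreasing on $[0,\infty)$ (assumption \ref{H1}, valid for horseshoe),
\[
h_\tau\!\Big(\theta_0+\tfrac{v}{\sqrt{n}}\Big)\ \ge\ h_\tau\!\big(|\theta_0|+1/\sqrt{n}\big)\ \ge\ \frac{1}{(2\pi)^{3/2}\tau}\log\!\Big(1+\frac{4\tau^2}{(|\theta_0|+1/\sqrt{n})^2}\Big),
\]
by the left inequality in \eqref{eq : HSandwich}. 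This yields
\[
D(\xi)\ \gtrsim\ \frac{1}{\tau\sqrt{n}}\,\log\!\Big(1+\frac{4\tau^2}{(|\theta_0|+1/\sqrt{n})^2}\Big)\int_{-1}^{1} e^{-(v-\xi)^2/2}\,dv.
\]

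The main technical point is then the $\xi$-dependence through the Gaussian integral on $[-1,1]$, which can be small for large $|\xi|$. This is handled by the same universal bound of \cite{CN} used in the proof of Lemma~\ref{lem : lapHTnew}, namely $E_{\xi\sim\mathcal{N}(0,1)}[(\int_{-1}^{1} e^{-(v-\xi)^2/2}dv)^{-1}]\le K$. Combining the two bounds inside the expectation gives a constant multiple of $\tau\sqrt{n}\,e^{t^2/2}/\log(1+4\tau^2/(|\theta_0|+1/\sqrt{n})^2)$; finally, using $1/\sqrt{n}\le 1$ (so that $(|\theta_0|+1/\sqrt{n})^2\le (|\theta_0|+1)^2$ and the logarithm can only decrease), we upgrade the denominator to the desired $\log(1+4\tau^2/(|\theta_0|+1)^2)$, which completes the argument.
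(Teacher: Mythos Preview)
Your proposal is correct and follows precisely the approach indicated in the paper: you reproduce the proof of Lemma~\ref{lem : lapHTnew} step by step, and at the denominator stage replace the generic heavy-tail lower bound \ref{H2} by the left inequality in the horseshoe sandwich \eqref{eq : HSandwich}, before invoking the same \cite{CN} bound on $E_\xi[(\int_{-1}^1 e^{-(v-\xi)^2/2}dv)^{-1}]$. The final monotonicity step replacing $1/\sqrt{n}$ by $1$ is also correct, since this makes the logarithm in the denominator smaller and hence the overall upper bound larger.
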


\begin{proof}
    One can follow the proof of Lemma \ref{lem : lapHTnew} and employ the sandwich inequality \eqref{eq : HSandwich} for the Horseshoe to bound the denominator instead of the general heavy-tail lower bound (H2).
\end{proof}

\begin{lemma}[Bounding $L_p$--norms in terms of wavelet coefficients]\label{lem : majlp}
    Let  $J^+ \geq J^- \geq -1$. Denote $\mathcal{J} := \{j\, : \, J^- \leq j \leq J^+ \}$ and $\mathcal{K}_j :=\{ k \, : \, 0 \leq k <2^j \} $ for any $j \geq -1$. Consider \[g := \sum_{j \in \mathcal{J}} \sum_{k \in \mathcal{K}_j} g_{jk}\psi_{jk}. \] For any $p \geq 1$, we have
    \begin{align*}
        ||g||_p  \lesssim \sum_{j \in \mathcal{J}} 2^{j(1/2 - 1/p)} ||g_{j \cdot}||_p \qquad \text{and} \qquad
        ||g||_p^p \lesssim (J^+ - J^-)^{p-1} \sum_{j\in \mathcal{J}}2^{j(p/2 - 1)} ||g_{j \cdot}||_p^p
    \end{align*}
    
\end{lemma}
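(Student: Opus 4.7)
The proof is a short computation relying on two ingredients: the Parseval-like quasi-equivalence \eqref{parseval} at each fixed resolution level, and standard inequalities for sums of positive terms (triangle inequality and Jensen/power-mean).

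For the first bound, the plan is to write $g = \sum_{j \in \mathcal{J}} g^{(j)}$ where $g^{(j)} := \sum_{k \in \mathcal{K}_j} g_{jk} \psi_{jk}$ is the level-$j$ component, and then apply the triangle inequality in $L_p$:
\[
\|g\|_p \leq \sum_{j \in \mathcal{J}} \|g^{(j)}\|_p.
\]
Each single-scale term is then handled directly by the wavelet Parseval quasi-equality \eqref{parseval}, valid under the paper's assumptions on the boundary-corrected wavelet basis, which yields $\|g^{(j)}\|_p \lesssim 2^{j(1/2 - 1/p)} \|g_{j\cdot}\|_p$. Summing over $j \in \mathcal{J}$ gives the claim.

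For the second bound, the idea is simply to raise the first bound to the $p$-th power and then exchange the sum-to-the-$p$ for a sum-of-$p$-th-powers via Jensen's (or equivalently H\"older's) inequality. More precisely, writing $M := J^+ - J^- + 1$ for the number of terms in the sum, the power-mean inequality gives, for any nonnegative $(a_j)_{j \in \mathcal{J}}$ and $p \geq 1$,
\[
\Big(\sum_{j \in \mathcal{J}} a_j\Big)^p \leq M^{p-1} \sum_{j \in \mathcal{J}} a_j^p .
\]
Applying this with $a_j = 2^{j(1/2 - 1/p)} \|g_{j\cdot}\|_p$, combined with the first bound already established, one obtains
\[
\|g\|_p^p \lesssim M^{p-1} \sum_{j \in \mathcal{J}} 2^{jp(1/2 - 1/p)} \|g_{j\cdot}\|_p^p = M^{p-1} \sum_{j \in \mathcal{J}} 2^{j(p/2 - 1)} \|g_{j\cdot}\|_p^p ,
\]
and since $M \lesssim J^+ - J^-$ (up to an irrelevant constant, or else one absorbs the $+1$ into the implicit constant), the announced bound follows.

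The proof involves no real obstacle: the only point worth flagging is that the use of \eqref{parseval} is level-by-level, which is exactly the regime where the equivalence holds by the cited reference (Proposition 4.2.8 in \cite{GineNickl} and its boundary-corrected variant), so no additional regularity or summability assumption on $g$ beyond the finiteness of the double sum is needed.
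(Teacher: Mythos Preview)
Your proof is correct and follows essentially the same approach as the paper's: triangle inequality across levels combined with the single-scale Parseval-type equivalence \eqref{parseval} for the first bound, and then H\"older/Jensen (the power-mean inequality you use is exactly H\"older with exponents $p$ and $p/(p-1)$ applied to the sum over $\mathcal{J}$) to pass to the second. The paper's proof is identical, only phrased more tersely.
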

\begin{proof}
   Using the triangle inequality and the Parseval-like equality \eqref{parseval}, we have
   \[ ||g||_p =  \lVert \sum_{j \in \mathcal{J}} \sum_{k \in \mathcal{K}_j} g_{jk} \psi_{jk} \rVert_p 
             \leq \sum_{j \in \mathcal{J}}  \lVert  \sum_{k \in \mathcal{K}_j} g_{jk} \psi_{jk}  \rVert_p 
             \lesssim \sum_{j \in \mathcal{J}} 2^{j(1/2 - 1/p)} ||g_{j \cdot}||_p .\]
    Now simply apply Hölder's inequality with exponents $p$ and $p/(p-1)$ to get the desired result.
\end{proof}

\begin{lemma}\label{lem : pAnk}
    Let $A_n$ be the event defined in \eqref{def : newAn} and assume $f_0 \in B_{pq}^s(F)$, with $p,q \geq 1 $ and $ s,F>0$. Then as $n \to \infty$, we have
    \[ P_{f_0}(A_n^c) \to 0.\]
\end{lemma}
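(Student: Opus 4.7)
The plan is to mimic the proof of Lemma \ref{lem : pAn} (the Sobolev analogue), adapting it to the double-indexing and to the larger index set $\mathcal{I}$. The guiding observation is that on $\mathcal{I}$ the true coefficients satisfy $|f_{0,jk}|\le 1/\sqrt n$ uniformly, so every $X_{jk}$ with $(j,k)\in\mathcal{I}$ is a centered Gaussian (up to a $O(1/\sqrt n)$ shift) whose tails are easy to bound, and the only work is a clean union bound that accounts for the growth of the cardinality of $\mathcal{I}$ as $j$ increases.

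First I would verify the uniform bound $|f_{0,jk}|\le 1/\sqrt n$ for $(j,k)\in\mathcal{I}$. For $(j,k)\in\Lambda_n$ this is by definition. For $j\ge J_1$, the embedding $B^s_{pq}(F)\subset B^s_{p\infty}(F)$ together with $|f_{0,jk}|\le \|f_{0,j\cdot}\|_p$ and \eqref{def : besovnorm} gives $|f_{0,jk}|\le F\,2^{-j(s+1/2-1/p)}$, and with $2^{J_1}=(nF^2)^{1/(2(s-1/p)+1)}$ the exponent $\frac{s+1/2-1/p}{2(s-1/p)+1}$ equals exactly $1/2$, hence $|f_{0,jk}|\le F(nF^2)^{-1/2}=1/\sqrt n$. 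This is the only delicate step, and it is precisely the calibration motivating the choice of $J_1$ in \eqref{def : J0J1}.

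Next, for each $(j,k)\in\mathcal{I}$ and each level index $l\ge 0$ with $l\log_2 n<j\le (l+1)\log_2 n$ (with the convention that $l=0$ covers $J_0<j\le \log_2 n$), writing $X_{jk}=f_{0,jk}+\xi_{jk}/\sqrt n$ with $\xi_{jk}\sim\mathcal{N}(0,1)$, I would bound
\[
P_{f_0}(\mathcal{A}_{jk,l}^c)\le P\bigl(|\xi_{jk}|>\sqrt{4(l+1)\log n}-1\bigr)\le P\bigl(|\mathcal{N}(0,1)|>\sqrt{3(l+1)\log n}\bigr)\le n^{-3(l+1)/2},
\]
for $n$ large enough, using a standard Gaussian tail bound. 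The cardinality of $\mathcal{I}$ inside the corresponding level band is at most $\sum_{j\le (l+1)\log_2 n}2^j\le 2\,n^{l+1}$.

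Finally I would union-bound over all $(j,k)\in\mathcal{I}$ according to the decomposition \eqref{def : newAn} to obtain
\[
P_{f_0}(A_n^c)\;\le\; 2n\cdot n^{-3/2}+\sum_{l\ge 1}2n^{l+1}\cdot n^{-3(l+1)/2}\;\lesssim\; n^{-1/2}+\sum_{l\ge 1}n^{-(l+1)/2}\;\lesssim\; n^{-1/2},
\]
which tends to $0$. No step presents a genuine obstacle: the only content beyond routine Gaussian tail estimates and counting is the verification in Step 1, which is a direct consequence of the Besov embedding together with the tuning of $J_1$; everything else parallels Lemma \ref{lem : pAn}.
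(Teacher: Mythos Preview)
Your proposal is correct and follows essentially the same approach as the paper's proof: both rely on the fact that $|f_{0,jk}|\le 1/\sqrt{n}$ for all $(j,k)\in\mathcal{I}$, then apply the Gaussian tail bound $P(|\mathcal{N}(0,1)|>\sqrt{3(l+1)\log n})\le n^{-3(l+1)/2}$ and a union bound over levels, counting at most $\sim n^{l+1}$ indices in each band. Your explicit verification in Step~1 that $|f_{0,jk}|\le 1/\sqrt{n}$ for $j\ge J_1$ (via the Besov bound and the calibration of $J_1$) is in fact more detailed than the paper's own Lemma proof, which simply writes ``by definition'' and implicitly refers back to the argument around \eqref{def : jmax} in the proof of Theorem~\ref{thm : besov sparse}.
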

\begin{proof}
 We notice that for any $(j,k) \in \mathcal{I}$, by definition, $|f_{0,jk}| \leq 1/\sqrt{n}$. Therefore the proof is similar to the single-index version of this lemma, Lemma \ref{lem : pAn}, with $l$ taking the role of $\log \ell$ therein, we have in particular, as $n$ gets large enough, for all $(j,k) \in \mathcal{I}$ and $l \geq 0$
    \[P_{f_0}(\mathcal{A}_{l,jk}^c) \leq P\left\{ \left| \mathcal{N}(0,1) \right| >\sqrt{3 (l+1) \log n }\right\} \leq n^{- \frac32(l+1)}.\]
The last bound along with a union bound yield
    \[ P_{f_0}(A_n^c)\lesssim \sum_{l \geq 0} 2^{(l+1) \log_2 n }n^{- \frac32(l+1)} \lesssim n^{-1/2} \sum_{l \geq 0}n^{-l/2} \lesssim 1 /\sqrt{n},\]
   ensuring $P_{f_0}(A_n^c) \to 0$ as $n \to \infty$.
\end{proof}

\end{document}